\documentclass[12pt]{article}
\usepackage{amsfonts}
\usepackage{amsmath}
\usepackage{amsthm}
\usepackage{cite}
\usepackage{geometry}
\geometry{top=1.2in, bottom=1.2in, left=1.2in, right=1.2in}

\newcommand{\bb}[1]{\mathbb{#1}}

\newtheorem{lem}{Lemma}
\newtheorem{cor}{Corollary}
\newtheorem{thm}{Theorem}
\newtheorem{conj}{Conjecture}
\newtheorem{prop}{Proposition}

\title{Fibonacci-like growth of numerical semigroups of a given genus}
\author{Alex Zhai} 
\date{}

\begin{document}

\maketitle

\begin{abstract}
  \noindent We give an asymptotic estimate of the number of numerical
  semigroups of a given genus. In particular, if $n_g$ is the number
  of numerical semigroups of genus $g$, we prove that

  \[ \lim_{g \rightarrow \infty} n_g \varphi^{-g} = S \]
  
  \noindent where $S$ is a constant, resolving several related
  conjectures concerning the growth of $n_g$. In addition, we show that
  the proportion of numerical semigroups of genus $g$ satisfying $f <
  3m$ approaches $1$ as $g \rightarrow \infty$, where $m$ is the
  multiplicity and $f$ is the Frobenius number.
\end{abstract}

\section{Introduction}

A \emph{numerical semigroup} is defined to be a cofinite subsemigroup
of the non-negative integers. A numerical semigroup $\Lambda$ is said
to have \emph{genus} $g = g(\Lambda)$ if $| \bb{N} \setminus \Lambda |
= g$. We also define the \emph{multiplicity} $m = m(\Lambda) =
\min(\Lambda \setminus \{ 0 \})$. Finally, the \emph{Frobenius number}
$f = f(\Lambda)$ is defined to be $\max(\bb{N} \setminus \Lambda)$.

Let $n_g$ be the number of numerical semigroups of genus $g$. It was
observed by Bras-Amor\'os \cite{BrasAmoros0} that $n_g$ exhibits
Fibonacci-like growth; in particular, it was conjectured that $\lim_{g
  \rightarrow \infty} \frac{n_{g+1}}{n_g} = \varphi$, where $\varphi =
\frac{1 + \sqrt{5}}{2}$ is the golden ratio. Work towards the
resolution of this conjecture has been the subject of a number of
recent papers. It is known that $n_g = \Omega(\varphi^g)$ (see
\cite{BrasAmoros1}, \cite{Zhao}), but the only upper bounds for $n_g$
that have been given are no better than $O((2 - \epsilon)^g)$ (see
\cite{Zhao}, which summarizes the results of \cite{BrasAmoros1} and
\cite{Elizalde}).

Zhao \cite{Zhao} made the observation that most of the numerical
semigroups counted in $n_g$ seem to be of a certain type. Letting
$t_g$ denote the number of numerical semigroups $\Lambda$ of genus $g$
satisfying $f(\Lambda) < 3m(\Lambda)$, Zhao conjectured the following.

\begin{conj} \label{t_g/n_g conjecture}
\[ \lim_{g \rightarrow \infty} \frac{t_g}{n_g} = 1. \]
\end{conj}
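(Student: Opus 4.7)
The plan is to estimate $t_g$ and $n_g$ separately and show they agree asymptotically. I parametrize a numerical semigroup $\Lambda$ of multiplicity $m$ by its Apéry set $(w_1, \ldots, w_{m-1})$, where $w_i$ is the smallest element of $\Lambda$ congruent to $i \pmod{m}$. Writing $w_i = a_i m + i$ with $a_i \geq 1$, the genus is $g(\Lambda) = \sum_{i=1}^{m-1} a_i$, and the condition $f(\Lambda) < 3m$ is equivalent to $a_i \in \{1,2,3\}$ for every $i$. The semigroup property $w_i + w_j \in \Lambda$ translates into the closure inequality
\[ a_{(i+j) \bmod m} \;\leq\; a_i + a_j + \lfloor (i+j)/m \rfloor \]
whenever $(i+j) \bmod m \in \{1, \ldots, m-1\}$. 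This reduces the conjecture to a combinatorial counting problem about constrained sequences.

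For the lower bound on $t_g$, I would restrict further to $a_i \in \{1,2\}$. In that regime the right-hand side of the closure inequality is always at least $2$, while the left-hand side is at most $2$, so the constraints are automatically satisfied and every sequence in $\{1,2\}^{m-1}$ corresponds to a valid semigroup with $f < 2m \leq 3m$. The number of such sequences with $\sum a_i = g$ is $\binom{m-1}{g-m+1}$; summing over $m$ and applying the standard identity $\sum_n \binom{n}{g-n} = F_{g+1}$ yields $t_g \geq F_{g+1} \sim \varphi^{g+1}/\sqrt{5}$. In particular $t_g = \Omega(\varphi^g)$.

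The real work is then to bound $n_g - t_g$, i.e.\ the number of semigroups with $f \geq 3m$. Any such semigroup has at least one Apéry level $a_{i_0} \geq 4$. I would stratify these semigroups by the triple $(m, i_0, \ell)$ with $\ell = a_{i_0}$, and within each stratum read the closure inequality in the reverse direction: the existence of a large $a_{i_0}$ forces, for each representation $i_0 \equiv i + j \pmod{m}$, the lower bound $a_i + a_j \geq \ell - \lfloor (i+j)/m \rfloor$. These pairwise lower bounds preclude many of the other $a_j$'s from being $1$, which costs extra in the genus budget on top of the $\ell$ units already spent by $a_{i_0}$ itself. I expect the net effect to be an extra factor of roughly $\varphi^{-(\ell-3)}$ over the count for $t_g$, and summing over $m$, $i_0$, $\ell$ should give a total of $o(\varphi^g)$.

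The main obstacle will be making these savings quantitatively precise. A single large $a_{i_0}$ produces about $\min(i_0, m - i_0)$ pairwise constraints, but some of them are vacuous when a partner is itself forced to be large, and the constraints from different large levels interact. I suspect the cleanest route is to set up a generating function in which one variable tracks $g$ and another the multiset of Apéry levels, then verify that the dominant singularity at $x = 1/\varphi$ comes entirely from the $a_i \in \{1,2\}$ contribution, with all higher-level contributions contributing only strictly subexponential corrections. Combined with the lower bound, this gives $n_g/t_g \to 1$ as desired.
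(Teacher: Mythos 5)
Your proposal takes a genuinely different route from the paper --- Kunz/Ap\'ery coordinates rather than the semigroup tree --- but the technical heart of the argument is missing, so this is a plan with a gap, not a proof.

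What you have that is correct: the Kunz parametrization and the closure inequality $a_{(i+j)\bmod m} \le a_i + a_j + \lfloor (i+j)/m \rfloor$ are right, as is the observation that every sequence in $\{1,2\}^{m-1}$ satisfies these automatically, giving $t_g \ge F_{g+1} = \Omega(\varphi^g)$. (This lower bound is the already-known part of the story; it appears in \cite{BrasAmoros1} and \cite{Zhao}.) You also correctly isolate the real task: show that the number of semigroups with some $a_{i_0} \ge 4$ is $o(\varphi^g)$.

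That task is where the proposal stops being a proof. Your stratification by $(m,i_0,\ell)$ and the heuristic that a level $\ell$ ``costs'' roughly $\varphi^{-(\ell-3)}$ is plausible, but you never establish a concrete bound, and you explicitly flag the obstruction yourself: the pairwise constraints produced by a single large $a_{i_0}$ are sometimes vacuous, and constraints from multiple large levels interact. There is no worked-out mechanism for avoiding double-counting savings or for controlling the interaction; the proposed generating function ``in which one variable tracks $g$ and another the multiset of Ap\'ery levels'' is not set up, and it is not clear how a transfer-matrix or singularity argument could even apply, because the Kunz constraints are \emph{global} (they relate $a_i$, $a_j$, and $a_{(i+j)\bmod m}$ for all pairs), not local along a line. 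Asserting that the dominant singularity ``comes entirely from the $a_i\in\{1,2\}$ contribution'' is exactly the statement that needs to be proved, and is not obvious.

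By contrast, the paper never touches Kunz coordinates. It decomposes $n_g$ over weak descendents of strongly descended semigroups in the semigroup tree, shows $n_{g,2} \le t_g$ with $n_{g,2} = O(\varphi^g)$ and $n_{g,3} = o(\varphi^g)$, and the entire quantitative difficulty is packaged into Lemma \ref{main lemma}, which bounds $\sum_{\Lambda\in\mathcal{S}(m,f)} \varphi^{-g(\Lambda)+h(\Lambda)}$. Proving that lemma occupies Sections 4 and 5 and requires the admissible-subset weight function, a submultiplicativity lemma, explicit transfer matrices for $I(r)=1,2$, and a multi-case analysis over $I(0)$ and the size of $d$. So the part you are ``expecting'' and ``suspecting'' is precisely the part that consumed most of the paper's effort under a different (and more tractable) decomposition. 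To turn your proposal into a proof you would need a concrete quantitative substitute for Lemma \ref{main lemma} phrased in Kunz coordinates --- a genuinely nontrivial undertaking that the present sketch does not begin.
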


\noindent He also gave a formula

\[ \lim_{g \rightarrow \infty} t_g \varphi^{-g} = S, \]

\noindent where $S$ is the value of an infinite sum. It was not
determined whether this sum converges, leaving open the possibility
that $S = \infty$. However, Zhao conjectured that this is not the
case.\footnote{The actual conjecture was phrased in a slightly
  different but equivalent way.}

\begin{conj} \label{t_g convergence conjecture}
\[ \sup_{g \in \bb{N}} t_g \varphi^{-g} < \infty. \]
\end{conj}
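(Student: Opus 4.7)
My plan is to use the Apéry / Kunz coordinate representation of semigroups with $f < 3m$ and bound the resulting combinatorial count via generating-function asymptotics.

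A numerical semigroup of multiplicity $m$ with $f < 3m$ corresponds bijectively to a sequence $(c_1,\ldots,c_{m-1}) \in \{1,2,3\}^{m-1}$, where $c_i m + i$ is the smallest element of the semigroup in residue class $i$ modulo $m$. The closure axiom is equivalent to the subadditivity $c_i + c_j \geq c_{i+j}$ for $i,j \geq 1$ with $i + j < m$; because the $c$'s take values in $\{1,2,3\}$, this is a genuine restriction only when $c_i = c_j = 1$ and $c_{i+j} = 3$. The genus equals $\sum_i c_i$. Setting $S = \{i : c_i = 1\}$ and $T = \{i : c_i = 3\}$, the constraint becomes $T \cap ((S+S) \cap [1,m-1]) = \emptyset$, and $g = 2(m-1) + |T| - |S|$. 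For each $S$, admissible $T$ range over subsets of $[1,m-1] \setminus (S \cup (S+S))$ of size $q(S) := m - 1 - |S| - r(S)$, where $r(S) := |(S+S) \cap [1,m-1] \setminus S|$.

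The crucial golden-ratio identity is $1 + 1/\varphi = \varphi$. Introducing the generating function
\[
T(x) \;:=\; \sum_g t_g\, x^g \;=\; \sum_{m \geq 1} \sum_{S \subseteq [1,m-1]} x^{2(m-1) - |S|} (1 + x)^{q(S)},
\]
one sees that at $x = 1/\varphi$ each $(m,S)$-term equals $\varphi^{-(m-1) - r(S)}$. To conclude $t_g = O(\varphi^g)$, I would show that $T(x)$ admits analytic continuation past $x = 1/\varphi$ with at most a simple pole there; standard singularity analysis then gives $t_g = S \cdot \varphi^g + O((\varphi - \epsilon)^g)$ for some $\epsilon > 0$, which simultaneously identifies the limit from Zhao's formula.

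The main obstacle is the uniform-in-$m$ control of $Z_m := \sum_{S \subseteq [1,m-1]} \varphi^{-r(S)}$ and the near-singular behavior of $T(x)$. A subtle trade-off is in play: sparse $S$ give small $r(S)$ (little decay), but there are few such $S$; dense $S$ have sumsets filling nearly all of $[1,m-1]$, forcing $r(S) \approx m - 1 - |S|$ and yielding strong exponential cancellation. Quantifying this balance is the technical heart of the argument. A plausible route is to partition subsets $S$ by density and bound each regime separately, exploiting elementary additive-combinatorial facts about $|(S+S) \cap [1,m-1]|$. An alternative I would pursue in parallel is to construct an explicit combinatorial injection from genus-$g$ semigroups with $f < 3m$ into a Fibonacci-type object of size $O(\varphi^g)$—for example, a family of compositions of $g$ into parts from $\{1,2\}$—that encodes the Kunz data while respecting the subadditivity constraint, which would yield the bound directly without going through singularity analysis.
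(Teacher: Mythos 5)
Your reformulation via Kunz (Ap\'ery) coordinates is correct: a genus-$g$ numerical semigroup with multiplicity $m$ and $f < 3m$ is indeed encoded by a tuple $(c_1,\ldots,c_{m-1}) \in \{1,2,3\}^{m-1}$ with $g = \sum_i c_i$, and the only binding Kunz inequality is that $c_i=c_j=1$ forbids $c_{i+j}=3$ (for $i+j<m$). The identity $1+1/\varphi = \varphi$ is used correctly, and the bound one would extract is $t_g \le \varphi^g \sum_{m} \varphi^{-(m-1)} Z_m$ with $Z_m = \sum_{S \subseteq [1,m-1]} \varphi^{-r(S)}$. This is a genuinely different packaging from the paper, which never passes through Kunz coordinates: the paper instead decomposes $n_g$ over the Bras-Amor\'os semigroup tree by strongly descended ancestors, isolates the class $\mathcal{S}_2$ of ``orderly'' semigroups (which it proves have $f < 2m$), and reduces the bound to a weighted count of $(m,f,d)$-admissible subsets of an interval (its Lemma \ref{interval weight}). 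Both routes reduce the problem to an additive-combinatorial estimate involving golden-ratio weights.

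However, your proposal has a genuine gap: it stops exactly where the conjecture begins. The claim $\sum_m \varphi^{-(m-1)} Z_m < \infty$ (equivalently, $Z_m = O(\alpha^{m-1})$ for some $\alpha < \varphi$) is not established --- it is precisely Zhao's unresolved convergence statement rephrased in your notation, and you say so yourself (``quantifying this balance is the technical heart''). The alternatives you float are speculative: (i) analytic continuation of $T(x)$ past $1/\varphi$ with a simple pole is strictly stronger than what is needed and is asserted without any mechanism for proving it; the singularity structure of $\sum_m \sum_S x^{2(m-1)-|S|}(1+x)^{q(S)}$ at $x = 1/\varphi$ is far from obviously polar; (ii) the proposed injection into compositions into parts from $\{1,2\}$ is not constructed. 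A ``partition $S$ by density'' strategy for $Z_m$ could in principle work, but the naive sparse/dense dichotomy is delicate --- for $|S|=1$ one already gets a contribution linear in $m$, and the decisive cancellation comes from intermediate densities where $r(S)$ grows quadratically in $|S|$ until saturation; making that uniform in $m$ requires a real argument. Compare the paper, where the analogous heavy lifting is carried out in full: Lemma \ref{submultiplicativity} reduces the weight to a product over residue classes, Lemmas \ref{I(r) = 1 or 2 bounds on S(r)}--\ref{I(r) = 2 bound on T(r)} supply explicit $1.618^{\cdot}$ bounds (including transfer-matrix estimates verified via Cayley--Hamilton), and the multi-case Lemma \ref{interval weight} glues them. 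Your proposal has no counterpart to this; as written it reduces Conjecture \ref{t_g convergence conjecture} to an equivalent open estimate rather than proving it.
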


Given that Conjecture \ref{t_g convergence conjecture} is true, it
follows that $S$ is finite. Then, if Conjecture \ref{t_g/n_g
  conjecture} holds, it follows that $\lim_{g \rightarrow \infty} n_g
\varphi^{-g} = S$, proving that $n_g$ has Fibonacci-like growth. In
this paper, we prove Conjectures \ref{t_g/n_g conjecture} and \ref{t_g
  convergence conjecture}, thus establishing that Bras-Amor\'os'
original conjecture is correct.\footnote{We do not give an explicit
  estimate of the supremum in Conjecture \ref{t_g convergence
    conjecture}, although such an estimate would be theoretically
  possible. Zhao showed in \cite{Zhao} that it is at least $3.78$,
  with numerical evidence suggesting that it is not much larger than
  that.}  It immediately follows that $\lim_{g \rightarrow \infty}
\frac{n_{g - 1} + n_{g - 2}}{n_g} = 1$, another conjecture of
Bras-Amor\'os \cite{BrasAmoros0}, and it follows that $n_{g + 1} \ge
n_g$ for sufficiently large $g$, verifying a conjecture of Kaplan
\cite{Kaplan} for all but finitely many cases.

Since the set of numerical semigroups of a given genus does not seem
to have much structure, it is difficult to understand exact values of
$n_g$. In order to get around this difficulty, our approach is to use
combinatorial arguments to get Fibonacci-like relations on $n_g$ with
some ``error terms,'' which we then bound. This general idea was
already suggested by Bras-Amor\'os and Bulygin
\cite{BrasAmoros2}. Bras-Amor\'os defines the semigroup tree
\cite{BrasAmoros1}, a combinatorial object that allows us to obtain
the main Fibonacci term, which happens to correspond to $t_g$. Various
bounding techniques are then used to complete the proof.

The rest of the paper is divided into several sections. Section 2
provides an introduction to the semigroup tree and provides much of
the general setup for our approach. Section 3 gives a proof of the
main result under the assumption of a technical lemma. Sections 4 and
5 fill in the details for proving the technical lemma, and Section 6
contains conclusions and further questions.

\section{The semigroup tree}

The semigroup tree is defined in terms of the minimal generators of
numerical semigroups. It is well known that any numerical semigroup
$\Lambda$ has a \emph{minimal generating set} $G$, in the sense that
any set that generates $\Lambda$ contains $G$ (see Theorem 2.7 of
\cite{RosalesSanchez}). The elements of $G$ are called \emph{minimal
  generators}, and it is evident that no minimal generator can be
expressed as the sum of other non-zero elements of the semigroup.

Define an \emph{effective generator} of $\Lambda$ to be a minimal
generator larger than $f(\Lambda)$. This definition was given in
\cite{BrasAmoros2}, and here we further define the \emph{efficacy} $h$
of a semigroup to be the number of effective generators it has. As we
will later see, the effective generators correspond to deviations from
Fibonacci-like growth.

\subsection{Defining the semigroup tree}

We are now in a position to define the semigroup tree, which was first
defined in \cite{BrasAmoros1}. Note that by removing any minimal
generator from a numerical semigroup, we are left with another
numerical semigroup with one higher genus. The idea of the semigroup
tree is to characterize all numerical semigroups as a sequence of such
removals from the semigroup of genus $0$, always removing elements in
increasing order.

We say that a semigroup $\Lambda'$ \emph{descends} from a semigroup
$\Lambda$ if $\Lambda' = \Lambda - \{ \lambda \}$, where $\lambda$ is
an effective generator of $\Lambda$. Clearly, $g(\Lambda') =
g(\Lambda) + 1$. Then, we can consider an infinite tree whose vertices
are numerical semigroups, whose root is the semigroup of genus $0$,
and whose edges are between those pairs of semigroups in which one
descends from the other. It can be shown that each numerical semigroup
appears in this tree exactly once, and furthermore, it appears at
depth $g$ if $g$ is its genus (see \cite{BrasAmoros1} for a more
detailed discussion).

\subsection{Types of descent}

Suppose that $\Lambda' = \Lambda - \{ \lambda \}$ is a numerical
semigroup descending from $\Lambda$. We say that this descent is
\emph{weak} if each effective generator of $\Lambda'$ is also an
effective generator of $\Lambda$. In other words, no ``new'' effective
generator is created. We say the descent is \emph{strong}
otherwise. If the descent is strong, then we say that $\Lambda'$ is a
\emph{strongly descended} numerical semigroup. It will be convenient
to also consider the genus $0$ semigroup to be strongly descended. If
a numerical semigroup $\Lambda''$ is obtained from $\Lambda$ by a
series of weak descents (and no strong descents), then we say that
$\Lambda''$ is a \emph{weak descendent} of $\Lambda$. We will use the
convention that $\Lambda$ is a weak descendent of itself. These
notions are adapted from \cite{BrasAmoros2}.

Now, for a numerical semigroup $\Lambda$, let $N_g(\Lambda)$ denote
the number of weak descendents of $\Lambda$ having genus $g$. Each
numerical semigroup is the weak descendent of a unique strongly
descended ancestor (namely, its nearest strongly descended
ancestor). Thus, if $\mathcal{S}$ is the set of strongly descended
semigroups, then

\[ n_g = \sum_{\Lambda \in \mathcal{S}} N_g(\Lambda). \]

In order to bound this sum, it will be useful to make note of two
lemmas. The first is an observation from \cite{BrasAmoros1} giving a
condition for a numerical semigroup to be strongly descended.

\begin{lem}[Bras-Amor\'os] \label{Bras-Amoros}
A numerical semigroup $\Lambda$ is strongly descended if and only if
$f(\Lambda) + m(\Lambda)$ is a minimal (hence effective) generator of
$\Lambda$.
\end{lem}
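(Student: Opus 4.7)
The strategy is to unwind the definitions directly. Write $f = f(\Lambda)$ and $m = m(\Lambda)$, and let $\Lambda_p = \Lambda \cup \{f\}$ be the parent of $\Lambda$ in the semigroup tree. By definition, $\Lambda$ is strongly descended precisely when some effective generator of $\Lambda$ fails to be an effective generator of $\Lambda_p$. Since every effective generator $e$ of $\Lambda$ satisfies $e > f > f(\Lambda_p)$, this simplifies to the existence of an element $e$ that is a minimal generator of $\Lambda$ but \emph{not} a minimal generator of $\Lambda_p$.

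For the backward direction, I would observe that if $f + m$ is a minimal generator of $\Lambda$, then $f + m > f$ (so it is effective), and the identity $f + m = f + m$ with $f, m \in \Lambda_p$ both positive shows it is not a minimal generator of $\Lambda_p$, producing the required new effective generator.

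For the forward direction, I would start from any new effective generator $e$ and write $e = a + b$ with $a, b \in \Lambda_p$ positive. Since $e$ is a minimal generator of $\Lambda$, one of $a, b$ must equal the unique element of $\Lambda_p \setminus \Lambda$, namely $f$; so we may assume $e = f + b$ with $b \in \Lambda_p$ positive. In the generic subcase $b \in \Lambda$, one has $b \geq m$; and if $b > m$, the identity $e = m + (f + b - m)$ expresses $e$ as a sum of two positive elements of $\Lambda$ (since $f + b - m > f$), contradicting the minimality of $e$. So $b = m$ and $e = f + m$ is itself the desired minimal generator. In the remaining subcase $b = f$ (i.e., $e = 2f$), one first rules out $m \leq f$ by the same trick $2f = m + (2f - m)$; then $m > f$ forces $f + m < 2m$, and since any decomposition of $f + m$ into two positive elements of $\Lambda$ would require both summands to be at least $m$, $f + m$ is a minimal generator automatically.

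The only real subtlety is the degenerate subcase $e = 2f$ with $f < m$, which constrains $\Lambda$ to have the form $\{0, m, m+1, m+2, \dots\}$ and must be handled separately from the generic argument; everything else amounts to routine bookkeeping with minimal generators.
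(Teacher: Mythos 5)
Your argument is correct and, unlike the paper (which simply cites Bras-Amor\'os and gives no proof), it is self-contained. The key simplification you use---that an effective generator $e$ of $\Lambda$ fails to be an effective generator of $\Lambda_p = \Lambda \cup \{f\}$ precisely when it fails to be a minimal generator of $\Lambda_p$, because $e > f > f(\Lambda_p)$ automatically---is exactly the right reduction, and the rest is a clean case check. In the forward direction, writing $e = f + b$ with $b \in \Lambda_p$ and observing that $b > m$ would give the decomposition $e = m + (f + b - m)$ inside $\Lambda$ (since $f + b - m > f$) correctly forces $b = m$ in the generic case. Your separate treatment of $e = 2f$ is also right: $m \le f$ is ruled out by $2f = m + (2f - m)$ (with the implicit note that $m = f$ is impossible since $f \notin \Lambda$), and once $m = f + 1$ the ordinary semigroup has $f + m = 2m - 1 < 2m$, which is indecomposable and hence a minimal generator. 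Two small things worth making explicit if this were to be written up: (1) the argument tacitly assumes $g(\Lambda) \ge 1$ so that $\Lambda_p$ exists; the genus-$0$ semigroup is ``strongly descended'' only by the paper's local convention, and indeed $f + m = 0$ is not a minimal generator there, so the characterization is genuinely about non-root vertices. (2) One should note, as you implicitly do, that a nonzero element of a numerical semigroup is a minimal generator if and only if it is not a sum of two positive elements of the semigroup---this is what licenses passing from ``$e$ is not a minimal generator of $\Lambda_p$'' to ``$e = a + b$ with $a, b \in \Lambda_p$ positive.''
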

\begin{proof}
  See \cite{BrasAmoros1}.
\end{proof}

\noindent The second lemma is an upper bound on $N_g(\Lambda)$.

\begin{lem} \label{N_g bound} For any numerical semigroup $\Lambda$,
  we have $N_g(\Lambda) \le \binom{h(\Lambda)}{g - g(\Lambda)}$ and
  $N_g(\Lambda) \le \varphi^{g - g(\Lambda) + h(\Lambda)}$.
\end{lem}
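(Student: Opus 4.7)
The plan is to prove the two bounds by tracking how the set of effective generators evolves under a single weak descent from $\Lambda$, iterating to get the binomial bound, and then reducing the second bound to the first via a purely combinatorial estimate.

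Let $E(\Lambda) = \{\lambda_1 < \lambda_2 < \cdots < \lambda_h\}$ enumerate the effective generators, where $h = h(\Lambda)$. First I would establish the following single-step observation: if the descent $\Lambda' = \Lambda - \{\lambda_i\}$ is weak, then $E(\Lambda') \subseteq \{\lambda_{i+1}, \ldots, \lambda_h\}$. The reasoning is that removing $\lambda_i$ (the smallest effective generator we might pick, in general just some $\lambda_i$) makes $\lambda_i$ itself the new Frobenius number, so every effective generator of $\Lambda'$ must strictly exceed $\lambda_i$; and weakness of the descent forces each effective generator of $\Lambda'$ to already lie in $E(\Lambda)$, so it must equal some $\lambda_j$ with $j > i$.

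Iterating this observation, any chain of weak descents from $\Lambda$ of length $k := g - g(\Lambda)$ is described by an increasing sequence of indices $i_1 < i_2 < \cdots < i_k$ drawn from $\{1, 2, \ldots, h\}$, and distinct index sequences produce distinct final semigroups (since the removed elements can be read off from the result). This yields the injective count $N_g(\Lambda) \le \binom{h}{k}$.

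For the second bound, I would prove the purely combinatorial estimate $\binom{h}{k} \le \varphi^{h+k}$ for all integers $h \ge k \ge 0$ by induction on $h + k$, using Pascal's rule together with the defining relation $\varphi^2 = \varphi + 1$. The inductive step reads
\[
  \binom{h}{k} = \binom{h-1}{k-1} + \binom{h-1}{k} \le \varphi^{(k-1)+(h-1)} + \varphi^{k+(h-1)} = \varphi^{h+k-2} + \varphi^{h+k-1} = \varphi^{h+k},
\]
with straightforward base cases and the convention that $\binom{h-1}{k} = 0$ when $k > h-1$. Combined with the first inequality, this gives $N_g(\Lambda) \le \varphi^{g - g(\Lambda) + h(\Lambda)}$.

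The main obstacle is the single-step observation about $E(\Lambda')$; although it is essentially a definition-chase, it relies on correctly identifying the new Frobenius number as $\lambda_i$ and on the precise meaning of weakness. Once this is in hand, the iteration is immediate and the remaining work is the routine Fibonacci-style induction.
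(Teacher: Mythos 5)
Your proof is correct and follows essentially the same route as the paper: the binomial bound comes from the observation that a weak descendent is determined by the set of effective generators of $\Lambda$ that were removed, and the $\varphi$-bound follows from the binomial bound via a Pascal-type induction (the paper passes through Fibonacci numbers, $\binom{a}{b} \le F_{a+b} \le \varphi^{a+b}$, which is the same induction in disguise). Your single-step observation that $E(\Lambda') \subseteq \{\lambda_{i+1},\ldots,\lambda_h\}$ usefully makes explicit why all removed elements are effective generators of the original $\Lambda$, a point the paper's terse argument leaves implicit.
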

\begin{proof}
  Let $F_n$ denote the $n$th Fibonacci number. Since $\binom{a}{b} \le
  F_{a + b} \le \varphi^{a + b}$, the second inequality is a
  consequence of the first. To show the first inequality, note that
  each weak descendent of $\Lambda$ is obtained by removing several
  effective generators of $\Lambda$. If the genus of the weak
  descendent is $g$, then $g - g(\Lambda)$ effective generators must
  be removed. There are at most $\binom{h(\Lambda)}{g - g(\Lambda)}$
  ways to choose $g - g(\Lambda)$ effective generators to remove;
  consequently, $N_g(\Lambda) \le \binom{h(\Lambda)}{g - g(\Lambda)}$.
\end{proof}

These lemmas give a rough idea of the general approach---use the
condition given in Lemma \ref{Bras-Amoros} to bound the number of
strongly descended numerical semigroups $\Lambda$, and then use Lemma
\ref{N_g bound} to bound $N_g(\Lambda)$. This will be carried out in
detail in the sections that follow.

\section{The main result}

Recall that the main result of this paper is

\begin{thm} \label{main result}
Let $n_g$ be the number of numerical semigroups of genus $g$. Then,

\[ \lim_{g \rightarrow \infty} \frac{n_g}{\varphi^g} = S, \]

where $S$ is a constant.
\end{thm}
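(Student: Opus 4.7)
The plan is to use the decomposition $n_g = \sum_{\Lambda \in \mathcal{S}} N_g(\Lambda)$ to rewrite $n_g \varphi^{-g}$ as a sum over strongly descended semigroups, identify a dominant subfamily whose contribution converges to $S$, and show that the remaining contribution is $o(\varphi^g)$. The cleanest version would establish both Conjecture \ref{t_g convergence conjecture} (so $S < \infty$) and Conjecture \ref{t_g/n_g conjecture} (so $n_g - t_g = o(\varphi^g)$), whence $\lim n_g \varphi^{-g} = \lim t_g \varphi^{-g} = S$.

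First I would examine weak descents carefully. Since a weak descent removes an effective generator $\lambda_i$ without creating any new one, the effective generators of the descendant are precisely those effective generators of the parent that exceed $\lambda_i$. Thus if every chain of removals starting at $\Lambda$ remains weak, then $N_g(\Lambda) = \binom{h(\Lambda)}{g - g(\Lambda)}$, making Lemma \ref{N_g bound} tight. Summing this identity weighted by $\varphi^{-g}$ and using the identity $\sum_k \binom{h}{k} \varphi^{-k} = \varphi^{h}$ (which follows from $\varphi = 1 + \varphi^{-1}$), one sees that the natural convergence condition for the generating-function-type sum is $\sum_{\Lambda \in \mathcal{S}} \varphi^{h(\Lambda) - g(\Lambda)} < \infty$, restricted appropriately.

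The two halves of the argument then split according to whether $f(\Lambda) < 3m(\Lambda)$. For the ``good'' family one expects Zhao's explicit formula to give $\lim_g t_g \varphi^{-g} = S$, with finiteness of $S$ following from convergence of $\sum \varphi^{h(\Lambda) - g(\Lambda)}$ restricted to such $\Lambda$. For the ``bad'' family---those $\Lambda \in \mathcal{S}$ with $f(\Lambda) \ge 3m(\Lambda)$, or those whose weak descendents can violate $f < 3m$---one needs a technical lemma bounding a weighted count of the form $\sum_{\Lambda,\, f \ge 3m} \varphi^{h(\Lambda) - g(\Lambda)}$ by a convergent series with good tail behavior in $g(\Lambda)$. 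I expect this technical lemma to be the main obstacle: strongly descended semigroups with $f \ge 3m$ can have many effective generators spread over a large range, and one must show that Lemma \ref{Bras-Amoros} (the constraint that $f + m$ be a minimal generator) nevertheless imposes enough rigidity to keep the weighted count small. Granted this lemma, the two conjectures follow and Theorem \ref{main result} is proved.
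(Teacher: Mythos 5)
Your high-level architecture matches the paper's: decompose $n_g = \sum_{\Lambda \in \mathcal{S}} N_g(\Lambda)$, show both Conjectures \ref{t_g convergence conjecture} and \ref{t_g/n_g conjecture}, and isolate a technical lemma bounding a $\varphi^{h(\Lambda) - g(\Lambda)}$-weighted count of strongly descended semigroups. That last identification is correct and is essentially Lemma \ref{main lemma}; your heuristic that $\sum_k \binom{h}{k}\varphi^{-k} = \varphi^h$ is why that weight is the right one, and your treatment of the ``bad'' family (the weak descendents of strongly descended $\Lambda$ with $f(\Lambda) \geq 2m(\Lambda)$, which are exactly the ones whose weak descendents can violate $f < 3m$) does give an $o(\varphi^g)$ bound once the technical lemma is granted, because the bad part of $\sum_g n_g \varphi^{-g}$ converges and hence its terms tend to $0$.

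The gap is in the ``good'' family. You propose to show $t_g = O(\varphi^g)$ via convergence of $\sum \varphi^{h(\Lambda) - g(\Lambda)}$ restricted to good $\Lambda$ (those with $f < 2m$). That restricted sum \emph{diverges}: since $n_g \varphi^{-g}$ converges to a positive $S$, the series $\sum_g n_g \varphi^{-g}$ diverges, and since the bad part of it converges, the good part must diverge. Concretely, Lemma \ref{main lemma} bounds the contribution at each $(m,f)$ by a quantity depending only on $f-m$, and summing over all $m$ with $f-m$ bounded gives infinitely many constant-size contributions. So a ``single convergent sum'' criterion cannot close the good case. The paper gets around this with the shift map $\tau(\Lambda,\Delta)$: Corollary \ref{non-effective corollary} and the bijection $|M(g,h)| = |M(2g-2h+1, g-h+1)|$ collapse the two-parameter family of good strongly descended semigroups (indexed by $g(\Lambda)$ and $h(\Lambda)$) to a one-parameter canonical family $M(2i+1,i+1)$ indexed by $i = g(\Lambda)-h(\Lambda)$, and \emph{that} one-parameter sum $\sum_i |M(2i+1,i+1)|\varphi^{-i}$ converges (again via Lemma \ref{main lemma}). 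You need something like this collapsing step; without it the good-family count has no $g$-uniform bound. A secondary technical point: the paper's partition of $\mathcal{S}$ is $g$-dependent ($h+g_\Lambda \geq g$ and $g_\Lambda - h_\Lambda < g/3$ versus $\geq g/3$), rather than the static $f < 2m$ versus $f \geq 2m$ split; the $g$-dependence is what lets one apply $N_g(\Lambda) \leq \binom{h(\Lambda)}{g-g(\Lambda)}$ cleanly within the Fibonacci identity $\sum_h \binom{h}{g-i-h} = F_{g-i+1}$. Finally, your claim that $N_g(\Lambda) = \binom{h(\Lambda)}{g-g(\Lambda)}$ exactly when all chains of removals remain weak is true but unused; only the inequality is needed.
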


\noindent In this section, we will prove the main result under the assumption
that the following inequality holds:

\begin{lem} \label{main lemma}
Let $\mathcal{S}(m, f)$ be the set of all strongly descended numerical
semigroups having multiplicity $m$ and Frobenius number $f$. Then,

\[ \sum_{\Lambda \in \mathcal{S}(m, f)} \varphi^{-g(\Lambda) +
  h(\Lambda)} \le 5 (f - m) \left( \frac{1.618}{\varphi} \right)^{f -
  m - 1}. \]
\end{lem}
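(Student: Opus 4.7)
The plan is to identify each $\Lambda \in \mathcal{S}(m,f)$ with the combinatorial data of its restriction to the interval $[m+1, f-1]$. Set $k = f - m$ and $T = \Lambda \cap [m+1, f-1]$. Since the gaps of $\Lambda$ are exactly $\{1, \ldots, m-1\} \cup ([m+1, f-1] \setminus T) \cup \{f\}$, we have $g(\Lambda) = f - 1 - |T|$, and therefore
\[
\varphi^{-g(\Lambda) + h(\Lambda)} = \varphi^{1 - f} \cdot \varphi^{|T| + h(\Lambda)}.
\]
So the lemma reduces to bounding $\sum_{T} \varphi^{|T| + h(T)}$, where the sum is over admissible profiles $T$ (satisfying semigroup closure) and $h(T)$ is the efficacy of the resulting $\Lambda$. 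The target bound then becomes $\sum_T \varphi^{|T| + h(T)} \le 5k\, (1.618)^{k-1} \cdot \varphi^{f-k} = 5k\,(1.618)^{k-1}\,\varphi^{m}$, after absorbing the $\varphi^{1-f}$ factor.

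The next step is to pin down $h(\Lambda)$ combinatorially. Because any element $x > f + m$ satisfies $x - m > f \in \Lambda$, every effective generator lies in $(f, f+m]$; by Lemma~\ref{Bras-Amoros} the endpoint $f + m$ is always effective. For $j \in [1, m-1]$, the element $f + j$ is a minimal generator precisely when no pair in $(\{m\} \cup T \cup \{f+1,\dots,f+j-1\}_{\text{min. gen.}})$ sums to $f+j$, which is a criterion determined by $T$ and the residues $j \bmod \gcd$-patterns of $T \cup \{m\}$. From this I would extract an upper bound of the shape $h(\Lambda) \le |T_{\text{small}}| + (\text{correction})$, where $T_{\text{small}} = T \cap [m+1, 2m-1]$ controls most decompositions.

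With $h$ controlled by $T$, I would then bound $\sum_T \varphi^{|T| + h(T)}$ by induction on $k$. The inductive step is to either delete the smallest gap in $[m+1, f-1]$ or delete an element of $T$ adjacent to a gap; in either case the semigroup closure conditions on $T$ are preserved and the parameters $(m, f, |T|, h)$ change in a controlled way. The Fibonacci identity $F_{a+b} \le \varphi^{a+b}$ used in Lemma~\ref{N_g bound} reappears here, but the key gain is that the admissibility of $T$ forbids certain Fibonacci configurations, reducing the effective growth rate from $\varphi$ to the slightly smaller $1.618$.

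The main obstacle is precisely this last point: getting a base $1.618/\varphi$ strictly less than $1$ (rather than merely $\le 1$) requires exhibiting a structural constraint on admissible $T$ that rules out at least one configuration per ``unit of $k$.'' I expect this to be the technical heart of Sections~4 and~5, involving a careful case split based on whether $f < 2m$, $2m \le f < 3m$, or $f \ge 3m$, since the decomposability of $f + j$ into sums of semigroup elements (and hence the relationship between $h$ and $T$) changes qualitatively in these regimes. The linear factor $5(f-m)$ should arise as a sum over the possible values of $h$ (at most $m$, but effectively at most $k + O(1)$), each contributing a bounded amount to the weighted count.
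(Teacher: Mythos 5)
Your setup is correct: parametrizing $\Lambda$ by $T = \Lambda \cap [m+1, f-1]$ gives $g(\Lambda) = f - 1 - |T|$, the target reduces as you say, and you are right that the effective generators live in $(f, f+m]$ and that the essential difficulty is the strict margin $1.618 < \varphi$. But the concrete mechanism you sketch is not the one in the paper, and I do not think it closes. The paper decomposes further by $d$, the gap between $m$ and the second-smallest nonzero element $m + d$, and studies $\Lambda \cap S$ for $S = \{m+d+1, \dots, f-1\}$ as an $(m,f,d)$-\emph{admissible} subset: closed under $+m$ within $S$ and containing no pair summing to $f + m$ (the latter comes from Lemma~\ref{Bras-Amoros}). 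The efficacy is then bounded not by $|T_{\text{small}}|$ but by $h(\Lambda) \le m - |\Lambda \cap S| - s(\Lambda \cap S)$, where $s(U) = |E(U)| - |E'(U)|$ is a signed count tracking, via $d$, which positions in $(f, f+m)$ are forced to be non-generators; this is why the extra parameter $d$ is indispensable and does not appear in your proposal. Substituting reduces the lemma to bounding a weight $w(S) = \sum_{U} \varphi^{-s(U)}$ over admissible $U$ by $1.618^{|S|+d+2}$.

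The crux is how that weight is controlled, and here your proposed induction on $k$ via local deletions would founder. Filling a gap in $T$ or removing an element adjacent to a gap does not preserve semigroup closure, because the $+m$ constraint couples positions $m$ apart; $k$ therefore does not step down in any controlled way. The paper instead partitions $[m,f]$ into residue-class ``strings'' $S(r)$ (pairing residues $r$ and $f - r \bmod m$), proves a submultiplicativity lemma $w(S_1 \cup S_2) \le w(S_1)w(S_2)$ when $S_1, S_2$ occupy distinct residue classes, and then bounds the weight of each string explicitly, including transfer-matrix computations for chains $T(r)$ of consecutive strings. It is precisely this decoupling by residue class that turns the long-range $+m$ constraint into a family of tractable one-dimensional problems. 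Your instinct that the case analysis tracks $\lfloor f/m \rfloor$ is right---the paper splits on $I(0) \approx \lfloor f/m\rfloor$---but the per-case bounds come from these explicit weight computations, not from counting decompositions of $f+j$ as you suggest.
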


\noindent Establishing this inequality is actually a key step in
showing Theorem \ref{main result}, but the proof of the inequality is
technical and is therefore deferred to the end of the paper.

From the previous section, our task is to estimate 

\[ n_g = \sum_{\Lambda \in \mathcal{S}} N_g(\Lambda). \]

\noindent We do this by partitioning $\mathcal{S}$ into three subsets
and summing over the three parts separately. Let $\mathcal{S}_1$
denote the set of strongly descended semigroups $\Lambda$ such that
$h(\Lambda) + g(\Lambda) < g$. Let $\mathcal{S}_2$ denote the set of
strongly descended semigroups $\Lambda$ such that $h(\Lambda) +
g(\Lambda) \ge g$ and $g(\Lambda) - h(\Lambda) <
\frac{g}{3}$. Finally, let $\mathcal{S}_3$ denote the set of strongly
descended semigroups $\Lambda$ such that $h(\Lambda) + g(\Lambda) \ge
g$ and $g(\Lambda) - h(\Lambda) \ge \frac{g}{3}$.

It is evident that the $\mathcal{S}_i$ partition $\mathcal{S}$. Thus,
we can write $n_g = n_{g, 1} + n_{g, 2} + n_{g, 3}$, where

\[ n_{g, i} = \sum_{\Lambda \in \mathcal{S}_i} N_g(\Lambda). \]

\noindent Note that if $\Lambda \in \mathcal{S}_1$, then by Lemma
\ref{N_g bound}, $N_g(\Lambda) = 0$ because $h(\Lambda) < g -
g(\Lambda)$. It follows that $n_{g, 1} = 0$. In the next two
subsections, we estimate $n_{g, 2}$ and $n_{g, 3}$.

\subsection{Estimating $n_{g, 2}$}

We will show that $n_{g, 2} = O(\varphi^g)$ and $n_{g, 2} \le
t_g$. The relevant properties of semigroups $\Lambda$ in
$\mathcal{S}_2$ are that $\Lambda$ is strongly descended and
$2h(\Lambda) > g(\Lambda)$. The first property is immediate from the
definition $\mathcal{S}$, and the second property follows from
manipulating the inequalities defining $\mathcal{S}_2$:

\[ 3(g(\Lambda) - h(\Lambda)) < g \le g(\Lambda) + h(\Lambda) \]
\[ g(\Lambda) < 2h(\Lambda). \]

\noindent We will define any semigroup satisfying these two properties
to be \emph{orderly}, and rather than work with semigroups in
$\mathcal{S}_2$ directly, it will be more convenient to make
observations about orderly semigroups in general and apply them to
$\mathcal{S}_2$. These observations stem from the following
proposition:

\begin{prop} \label{f < 2m}
If $\Lambda$ is orderly, then $f(\Lambda) < 2m(\Lambda)$.
\end{prop}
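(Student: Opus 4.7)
The plan is to prove the contrapositive: assume $f(\Lambda) \ge 2m(\Lambda)$ and deduce $2h(\Lambda) \le g(\Lambda)$, contradicting the orderliness hypothesis that $2h(\Lambda) > g(\Lambda)$. Throughout I abbreviate $m = m(\Lambda)$, $f = f(\Lambda)$, $g = g(\Lambda)$, $h = h(\Lambda)$. The argument will not actually need the ``strongly descended'' half of orderliness; only the inequality $f \ge 2m$ will be used.

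First I would record a general structural observation: every effective generator $\lambda$ of $\Lambda$ lies in the interval $(f, f+m]$. Indeed, if $\lambda > f+m$, then $\lambda - m > f$ belongs to $\Lambda$, and $\lambda = m + (\lambda - m)$ exhibits $\lambda$ as a sum of two smaller positive elements of $\Lambda$, contradicting minimality.

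Next I would associate to each effective generator $\lambda$ the two integers $\lambda - m$ and $\lambda - 2m$. Since $m$ and $2m$ both lie in $\Lambda$, the minimality of $\lambda$ as a generator forces $\lambda - m \notin \Lambda$ and $\lambda - 2m \notin \Lambda$ (otherwise $\lambda$ would decompose as $m + (\lambda - m)$ or $2m + (\lambda - 2m)$). The inequality $\lambda > f \ge 2m$ guarantees that both of these values are strictly positive, so each is a genuine gap of $\Lambda$.

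The key step is to verify that the $2h$ gaps produced in this way are pairwise distinct. Within each of the two families the values are obviously distinct, so the only potential collision is $\lambda_1 - m = \lambda_2 - 2m$ for two effective generators with $\lambda_1 \ne \lambda_2$. But this rearranges to $\lambda_2 = \lambda_1 + m$, once again expressing $\lambda_2$ as a sum of two smaller positive elements of $\Lambda$ and violating the minimality of $\lambda_2$ as a generator. Thus we have exhibited $2h$ distinct gaps, giving $2h \le g$ and the required contradiction. The only genuinely clever move is noticing that the hypothesis $f \ge 2m$ lets us double the naive injection into $(f-m, f]$ by using $\lambda - 2m$ alongside $\lambda - m$; everything else is routine bookkeeping.
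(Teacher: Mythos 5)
Your proof is correct, and it takes a genuinely different route from the paper's. The paper uses both halves of the ``orderly'' hypothesis: it invokes strong descent via Lemma \ref{Bras-Amoros} (that $f+m$ is an effective generator) to argue that at most half of $[m,f]$ lies in $\Lambda$, derives the intermediate bound $f \le 3m-1$, and then runs a covering-by-intervals and residue-class counting argument to finally extract $f < 2m$. Your argument dispenses with strong descent entirely: assuming $f \ge 2m$, you produce the injection $\lambda \mapsto (\lambda - m, \lambda - 2m)$ from the set of effective generators into pairs of gaps, check that all $2h$ resulting gaps are distinct (the only nontrivial collision $\lambda_1 - m = \lambda_2 - 2m$ would force $\lambda_2 = \lambda_1 + m$ with $\lambda_1 > f$ in $\Lambda$, violating minimality of $\lambda_2$), and conclude $2h \le g$, contradicting the other half of orderliness. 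The observation that $f \ge 2m$ is precisely what guarantees $\lambda - 2m > 0$ is the right place to use the hypothesis. Your version proves the strictly stronger statement that $f \ge 2m$ implies $2h \le g$ for \emph{every} numerical semigroup, with no strong-descent assumption, and is considerably shorter; the paper's version is more roundabout but produces intermediate facts ($f \le 3m-1$, counts of residue classes hitting $\Lambda$ below $f$) that are in the same spirit as what it reuses in Corollary \ref{m >= f + h - g}.
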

\begin{proof}
First, we observe that for any numerical semigroup $\Lambda$, the
effective generators must lie in the interval $[f(\Lambda) + 1,
  f(\Lambda) + m(\Lambda)]$. Thus, $h(\Lambda) \le m(\Lambda)$.

Since $\Lambda$ is strongly descended, we know that $f(\Lambda) +
m(\Lambda)$ is an effective generator. Consequently, $\Lambda$
contains at most half of the integers in the interval $[m(\Lambda),
  f(\Lambda)]$, since no two elements of $\Lambda$ can sum to
$f(\Lambda) + m(\Lambda)$.\footnote{Note that $m(\Lambda) \le
  f(\Lambda) + 1$ for all numerical semigroups $\Lambda$. In the case
  that $m(\Lambda) = f(\Lambda) + 1$, we take $[m(\Lambda),
    f(\Lambda)]$ to be empty, and our analysis still carries through.}
This already forces at least $\frac{f(\Lambda) - m(\Lambda) + 1}{2}$
elements of the interval $[m(\Lambda), f(\Lambda)]$ to be absent from
$\Lambda$, so

\[ m(\Lambda) - 1 + \frac{f(\Lambda) - m(\Lambda) + 1}{2} \le g(\Lambda) \le 2h(\Lambda) - 1 \le 2m(\Lambda) - 1. \]

\noindent Rearranging yields $f(\Lambda) \le 3m(\Lambda) - 1$.

Note that the number of elements of $\Lambda$ in $[m(\Lambda),
  f(\Lambda)]$ is $f(\Lambda) - g(\Lambda)$. Since $f(\Lambda) \le
3m(\Lambda) - 1$, the intervals $[m(\Lambda), 2m(\Lambda) - 1]$ and
$[f(\Lambda) - m(\Lambda) + 1, f(\Lambda)]$ cover $[m(\Lambda),
  f(\Lambda)]$, so at least one of those intervals has at least half
of the elements of $\Lambda$ in $[m(\Lambda), f(\Lambda)]$. In other
words, one of the intervals contains at least $\frac{f(\Lambda) -
  g(\Lambda)}{2}$ elements of $\Lambda$.

Therefore, there are at least $\frac{f(\Lambda) - g(\Lambda)}{2}$
residues $r$ modulo $m(\Lambda)$ for which there exists $\lambda \in
\Lambda$ with $\lambda \le f(\Lambda)$ and $\lambda \equiv r \bmod
m(\Lambda)$. If such a $\lambda$ exists, it is impossible for
$\Lambda$ to have an effective generator congruent to $r$ modulo
$m(\Lambda)$, since such a generator would be the sum of $\lambda$ and
a multiple of $m(\Lambda)$. Consequently, $\Lambda$ has at most
$m(\Lambda) - \frac{f(\Lambda) - g(\Lambda)}{2}$ effective generators.

We thus have

\[ g(\Lambda) < 2h(\Lambda) \le 2m(\Lambda) - \left( f(\Lambda) - g(\Lambda) \right) \]
\[ f(\Lambda) < 2m(\Lambda), \]

\noindent as desired.
\end{proof}

\begin{cor} \label{m >= f + h - g}
If $\Lambda$ is an orderly semigroup, then $m(\Lambda) \ge f(\Lambda)
+ h(\Lambda) - g(\Lambda)$.
\end{cor}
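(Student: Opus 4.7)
The plan is to leverage Proposition \ref{f < 2m} (which gives $f(\Lambda) < 2m(\Lambda)$) to strengthen the residue-counting argument that appeared at the end of its proof. The key observation is that once we know $f < 2m$, the interval $[m, f]$ fits inside $[m, 2m-1]$, so the weaker bound $h \le m - \frac{f-g}{2}$ obtained in the Proposition can be improved to $h \le m - (f-g)$, which is precisely what the corollary asserts after rearrangement $m \ge f + h - g$.

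The sequence of steps I plan to carry out:

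\begin{enumerate}
\item Invoke Proposition \ref{f < 2m} to get $f(\Lambda) < 2m(\Lambda)$, so $[m(\Lambda), f(\Lambda)] \subseteq [m(\Lambda), 2m(\Lambda) - 1]$.
\item Count elements of $\Lambda$ up to $f(\Lambda)$: since there are $g(\Lambda)$ gaps (all in $[1, f(\Lambda)]$) and $0 \in \Lambda$, we have $|\Lambda \cap [0, f(\Lambda)]| = f(\Lambda) + 1 - g(\Lambda)$. Since $\Lambda \cap [1, m(\Lambda)-1] = \emptyset$, this gives $|\Lambda \cap [m(\Lambda), f(\Lambda)]| = f(\Lambda) - g(\Lambda)$.
\item Because $[m(\Lambda), f(\Lambda)] \subseteq [m(\Lambda), 2m(\Lambda)-1]$ consists of fewer than $m(\Lambda)$ consecutive integers, the elements of $\Lambda \cap [m(\Lambda), f(\Lambda)]$ all lie in distinct residue classes mod $m(\Lambda)$. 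So they represent exactly $f(\Lambda) - g(\Lambda)$ residue classes.
\item Repeat the residue exclusion argument from the Proposition: if some $\lambda \in \Lambda$ with $\lambda \le f(\Lambda)$ satisfies $\lambda \equiv r \pmod{m(\Lambda)}$, then no effective generator can have residue $r$ (any such effective generator would equal $\lambda$ plus a positive multiple of $m(\Lambda) \in \Lambda$, hence fail to be a minimal generator). Since each of the $m(\Lambda)$ residues mod $m(\Lambda)$ contains at most one effective generator in $[f(\Lambda)+1, f(\Lambda)+m(\Lambda)]$, we conclude
\[ h(\Lambda) \le m(\Lambda) - (f(\Lambda) - g(\Lambda)). \]
\item Rearrange to obtain $m(\Lambda) \ge f(\Lambda) + h(\Lambda) - g(\Lambda)$.
\end{enumerate}

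There is no real obstacle here: the whole point is that the Proposition did most of the work, and once $f < 2m$ is in hand the interval $[m,f]$ automatically sits inside a length-$m$ window, so we no longer need to split into the two sub-intervals and lose a factor of $2$. The only minor care needed is the boundary case $f(\Lambda) = m(\Lambda) - 1$ (when $[m(\Lambda), f(\Lambda)]$ is empty), but there the inequality becomes $h(\Lambda) \le m(\Lambda) + g(\Lambda) - f(\Lambda) = m(\Lambda) + 1 \cdot [\text{small}]$ and is easily verified directly from $h(\Lambda) \le m(\Lambda)$.
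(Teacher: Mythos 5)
Your proof is correct and takes essentially the same approach as the paper. The only cosmetic difference is that you phrase the blocking argument in terms of residue classes modulo $m(\Lambda)$, while the paper uses the equivalent observation that the shift $\lambda \mapsto \lambda + m(\Lambda)$ injects $\Lambda \cap [m(\Lambda), f(\Lambda)]$ into the non-effective-generator positions of $[f(\Lambda)+1, f(\Lambda)+m(\Lambda)]$; both yield $h(\Lambda) \le m(\Lambda) - (f(\Lambda) - g(\Lambda))$ directly from Proposition~\ref{f < 2m}.
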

\begin{proof}
By Proposition \ref{f < 2m}, for any $\lambda \in \Lambda \cap
[m(\Lambda), f(\Lambda)]$, we know that $\lambda + m(\Lambda) \in
[f(\Lambda) + 1, f(\Lambda) + m]$, and $\lambda + m(\Lambda)$ cannot
be an effective generator.

Note that there are $f(\Lambda) - g(\Lambda)$ elements of $\Lambda$ in
$[m(\Lambda), f(\Lambda)]$. Hence, $\Lambda$ has at most $m(\Lambda) -
\left( f(\Lambda) - g(\Lambda) \right)$ effective generators. This
gives us the inequality 

\[ h(\Lambda) \le m(\Lambda) - f(\Lambda) + g(\Lambda), \]

\noindent which is the desired inequality upon rearranging terms.
\end{proof}

\begin{cor}
If $\Lambda'$ is a weak descendent of an orderly semigroup, then
$f(\Lambda') < 3m(\Lambda')$.
\end{cor}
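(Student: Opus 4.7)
The plan is to control $f(\Lambda')$ and $m(\Lambda')$ in terms of the corresponding data for the orderly semigroup $\Lambda$, then invoke Proposition \ref{f < 2m} directly. Write $\Lambda' = \Lambda \setminus E$, where $E$ is the (possibly empty) set of effective generators of $\Lambda$ that were removed over the course of the sequence of weak descents producing $\Lambda'$. This uses the fact, already used in the proof of Proposition \ref{f < 2m}, that every effective generator of $\Lambda$ lies in the interval $[f(\Lambda) + 1, f(\Lambda) + m(\Lambda)]$.

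Next I would make two elementary observations. First, every element of $E$ satisfies $\lambda > f(\Lambda) \ge m(\Lambda) - 1$, so $\lambda \ge m(\Lambda)$; thus removing them cannot decrease the multiplicity, giving $m(\Lambda') \ge m(\Lambda)$. Second, since $\bb{N} \setminus \Lambda' = (\bb{N} \setminus \Lambda) \cup E$ and every element of $E$ is at most $f(\Lambda) + m(\Lambda)$, we obtain
\[ f(\Lambda') \le f(\Lambda) + m(\Lambda). \]

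Combining these with Proposition \ref{f < 2m} applied to $\Lambda$, which gives $f(\Lambda) < 2m(\Lambda)$, yields
\[ f(\Lambda') \le f(\Lambda) + m(\Lambda) < 3m(\Lambda) \le 3m(\Lambda'), \]
as desired. There is no real obstacle here: the statement is essentially a bookkeeping corollary of Proposition \ref{f < 2m} together with the location of effective generators. The only point that requires a moment's care is verifying $m(\Lambda') \ge m(\Lambda)$ in the edge case $m(\Lambda) = f(\Lambda) + 1$, where $m(\Lambda)$ itself could be an effective generator; but removing it only makes the multiplicity larger, so the inequality is preserved.
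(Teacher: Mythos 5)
Your proof is correct and follows essentially the same route as the paper: bound $f(\Lambda')$ by $f(\Lambda) + m(\Lambda)$ using the location of effective generators, then apply Proposition \ref{f < 2m}. The only cosmetic difference is that you argue $m(\Lambda') \ge m(\Lambda)$ rather than the (also true, since removing $m(\Lambda)$ would necessarily be a strong descent) equality $m(\Lambda') = m(\Lambda)$ used in the paper; either suffices.
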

\begin{proof}
Let $\Lambda$ be the orderly semigroup for which $\Lambda'$ is the
weak descendent of $\Lambda$. By Lemma \ref{Bras-Amoros}, the largest
effective generator of $\Lambda$ is $f(\Lambda) + m(\Lambda)$. Since
$\Lambda'$ is obtained from $\Lambda$ by removing effective generators
of $\Lambda$, it follows that $f(\Lambda') \le f(\Lambda) +
m(\Lambda)$. Meanwhile, $m(\Lambda') = m(\Lambda)$, so

\[ f(\Lambda') \le f(\Lambda) + m(\Lambda) < 3m(\Lambda) = 3m(\Lambda'). \]
\end{proof}

\begin{cor}
$n_{g, 2} \le t_g$.
\end{cor}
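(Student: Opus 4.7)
The plan is to realize $n_{g,2}$ as counting a subset of the semigroups of genus $g$ that satisfy $f < 3m$, and then appeal directly to the definition of $t_g$.

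First I would unpack the definition: $n_{g,2} = \sum_{\Lambda \in \mathcal{S}_2} N_g(\Lambda)$, where $N_g(\Lambda)$ counts the weak descendents of $\Lambda$ having genus $g$. So $n_{g,2}$ is, a priori, a sum over $\mathcal{S}_2$ of weak-descendent counts, and the content of the statement is that (i) these contributions are disjoint as sets of semigroups, and (ii) each contributing semigroup lies in the set enumerated by $t_g$.

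For (i), I would invoke the fact noted earlier in Section~2 that every numerical semigroup has a \emph{unique} nearest strongly descended ancestor, i.e.\ the weak descendents of distinct strongly descended semigroups are disjoint. Since $\mathcal{S}_2 \subseteq \mathcal{S}$, the union of the sets of genus-$g$ weak descendents of the $\Lambda \in \mathcal{S}_2$ is a disjoint union, and $n_{g,2}$ equals the cardinality of this union. For (ii), every $\Lambda \in \mathcal{S}_2$ is orderly (this was shown at the start of the subsection from the defining inequalities of $\mathcal{S}_2$), so the preceding corollary applies: any weak descendent $\Lambda'$ of $\Lambda$ satisfies $f(\Lambda') < 3m(\Lambda')$.

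Combining these two observations, $n_{g,2}$ counts a subset of the numerical semigroups $\Lambda'$ of genus $g$ with $f(\Lambda') < 3m(\Lambda')$, of which there are exactly $t_g$. Hence $n_{g,2} \le t_g$. There is no real obstacle here; the corollary is an immediate bookkeeping consequence of the preceding one together with the uniqueness-of-ancestor property of the semigroup tree.
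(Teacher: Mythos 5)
Your proof is correct and takes essentially the same route as the paper's: since every $\Lambda \in \mathcal{S}_2$ is orderly, the preceding corollary shows all their weak descendents of genus $g$ satisfy $f < 3m$, hence are among the $t_g$ semigroups. You additionally spell out the disjointness of weak-descendent sets via uniqueness of the nearest strongly descended ancestor, a point the paper leaves implicit.
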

\begin{proof}
By definition, $n_{g, 2}$ counts the number of weak genus $g$
descendents of elements of $\mathcal{S}_2$. Since all elements of
$\mathcal{S}_2$ are orderly, all weak descendents of elements of
$\mathcal{S}_2$ are counted under $t_g$ by the previous
corollary. Thus, $n_{g, 2} \le t_g$.
\end{proof}

\noindent We next define the function $\tau(\Lambda, \Delta) = \{ 0 \}
\cup \left( \left( \Lambda \setminus \{ 0 \} \right) + \Delta \right)$
for a numerical semigroup $\Lambda$ and $\Delta \in \bb{Z}$. In
essence, $\tau(\Lambda, \Delta)$ is a shift of the non-zero elements
of $\Lambda$ by $\Delta$. We record several basic properties of $\tau$
as lemmas.

\begin{lem} \label{tau basic properties}
Let $\Lambda$ be a numerical semigroup, and suppose that $\Lambda' =
\tau(\Lambda, \Delta)$ is also a numerical semigroup. Then,
$f(\Lambda') = f(\Lambda) + \Delta$, $m(\Lambda') = m(\Lambda) +
\Delta$, and $g(\Lambda') = g(\Lambda) + \Delta$.
\end{lem}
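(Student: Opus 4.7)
The plan is to unpack the definition $\tau(\Lambda, \Delta) = \{0\} \cup ((\Lambda \setminus \{0\}) + \Delta)$ directly; the map $\tau$ is essentially a translation by $\Delta$ applied to the non-zero part of $\Lambda$, so each of the three invariants should translate accordingly. The computation of $m(\Lambda')$ is immediate: the minimum of $\Lambda' \setminus \{0\}$ equals the minimum of $(\Lambda \setminus \{0\}) + \Delta$, which is $m(\Lambda) + \Delta$.

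For $f(\Lambda')$, I would argue that every integer $y > f(\Lambda) + \Delta$ satisfies $y - \Delta > f(\Lambda)$ and hence lies in $\Lambda \setminus \{0\}$, so $y \in \Lambda'$; meanwhile $f(\Lambda) + \Delta$ itself has preimage $f(\Lambda) \notin \Lambda$ under the shift, so it is not in $\Lambda'$. This gives $f(\Lambda') = f(\Lambda) + \Delta$.

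For $g(\Lambda')$, I would use a bijective count: choose any $N$ with $N > f(\Lambda')$ and $N - \Delta > f(\Lambda)$, so that the shift by $\Delta$ is a bijection from $(\Lambda \setminus \{0\}) \cap [1, N - \Delta]$ onto $\Lambda' \cap [1, N]$. Then $|\Lambda' \cap [1, N]| = (N - \Delta) - g(\Lambda)$, and since this also equals $N - g(\Lambda')$, we obtain $g(\Lambda') = g(\Lambda) + \Delta$. The only minor subtlety is that when $\Delta < 0$, one should check that no positive element of $\Lambda$ shifts to a non-positive integer; this is guaranteed by the hypothesis that $\Lambda'$ is a numerical semigroup, which forces $m(\Lambda) + \Delta \ge 1$. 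I do not expect any real obstacle; the lemma is a bookkeeping exercise whose purpose is presumably to set up later manipulations of $\tau$.
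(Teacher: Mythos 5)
Your proof is correct in substance and takes the same direct unpack-the-definition route the paper intends; the paper's own proof simply declares all three identities immediate, so your write-up is exactly the expansion it is asking the reader to do.

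One small inaccuracy worth noting: you claim that the hypothesis ``$\Lambda'$ is a numerical semigroup'' forces $m(\Lambda) + \Delta \ge 1$, but it only forces $m(\Lambda) + \Delta \ge 0$. In the boundary case $\Delta = -m(\Lambda)$ the shifted set can still be a numerical semigroup even though the lemma's conclusions fail: take $\Lambda = \{0,2,4,5,6,\ldots\}$ and $\Delta = -2$, so that $\tau(\Lambda,\Delta) = \{0\} \cup \{0,2,3,4,\ldots\} = \{0,2,3,4,\ldots\}$, which is a perfectly good numerical semigroup with $m(\Lambda') = 2 \ne 0 = m(\Lambda)+\Delta$ and $g(\Lambda') = 1 \ne 0 = g(\Lambda)+\Delta$. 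So the lemma as literally stated admits this degenerate exception; the condition $m(\Lambda)+\Delta > 0$ (equivalently, $\Delta > -m(\Lambda)$) is a tacit standing hypothesis. The paper never runs into this case in its later applications of $\tau$ (there $\Delta$ always satisfies $f(\Lambda) < 2m(\Lambda)+\Delta$ with $f(\Lambda) \ge m(\Lambda)$, which gives $m(\Lambda)+\Delta > 0$), so your reading of the intent is right, but the justification for the edge case needs to cite strict positivity rather than claiming the semigroup hypothesis alone provides it.
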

\begin{proof}
These are all immediate from the definition of $\tau$.
\end{proof}

\begin{lem} \label{tau semigroup condition}
If a numerical semigroup $\Lambda$ and an integer $\Delta$ satisfy
$f(\Lambda) < 2m(\Lambda) + \Delta$, then $\tau(\Lambda, \Delta)$ is
also a numerical semigroup.
\end{lem}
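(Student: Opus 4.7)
The plan is to verify the three defining properties of a numerical semigroup for $\tau(\Lambda, \Delta)$: containment in $\bb{N}$, closure under addition, and cofiniteness in $\bb{N}$. The hypothesis $f(\Lambda) < 2m(\Lambda) + \Delta$ is precisely what makes closure go through, so that will be the substantive step; the other two properties should fall out of standard observations about the gap structure of $\Lambda$.

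For closure, take $a, b \in \tau(\Lambda, \Delta)$. If either is $0$, closure is immediate, so assume both are nonzero and write $a = a' + \Delta$, $b = b' + \Delta$ with $a', b' \in \Lambda \setminus \{0\}$; in particular $a', b' \ge m(\Lambda)$. Showing $a + b \in \tau(\Lambda, \Delta)$ reduces to showing that $a + b - \Delta = a' + b' + \Delta$ lies in $\Lambda \setminus \{0\}$. Here I would apply the hypothesis directly: $a' + b' + \Delta \ge 2m(\Lambda) + \Delta > f(\Lambda)$, and every integer exceeding $f(\Lambda)$ is in $\Lambda$; positivity of $a' + b' + \Delta$ is clear since $a', b' \ge m(\Lambda) \ge 1$ and $a' + b' + \Delta > f(\Lambda) \ge -1$.

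For containment in $\bb{N}$, I would use that $\{1, \ldots, m(\Lambda) - 1\}$ are all gaps of $\Lambda$, so $f(\Lambda) \ge m(\Lambda) - 1$; the hypothesis then gives $m(\Lambda) + \Delta > f(\Lambda) - m(\Lambda) \ge -1$, forcing the smallest nonzero shifted element $m(\Lambda) + \Delta$ to be nonnegative, and every other element of $(\Lambda \setminus \{0\}) + \Delta$ is larger. For cofiniteness, any $n > \max(\Delta, \Delta + f(\Lambda))$ satisfies $n - \Delta > f(\Lambda)$ and $n - \Delta > 0$, so $n - \Delta \in \Lambda \setminus \{0\}$ and hence $n \in \tau(\Lambda, \Delta)$. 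I do not expect a real obstacle here: the hypothesis is tuned precisely so that the sum of two elements of size at least $m(\Lambda)$, after the shift by $\Delta$, is forced past the Frobenius number of $\Lambda$, which is exactly the obstruction to closure.
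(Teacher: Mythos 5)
Your proposal is correct and follows essentially the same route as the paper: check nonnegativity, cofiniteness, and closure, and obtain closure from the observation that the sum of any two nonzero elements is large enough to lie past the Frobenius number, which is exactly where the hypothesis $f(\Lambda) < 2m(\Lambda) + \Delta$ enters. The only cosmetic difference is that you un-shift and verify $a' + b' + \Delta \in \Lambda$, whereas the paper works directly in $\tau(\Lambda,\Delta)$ and compares $\lambda_1 + \lambda_2$ against $f(\Lambda) + \Delta$.
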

\begin{proof}
Let $\Lambda' = \tau(\Lambda, \Delta)$. Note that $\min \left( \Lambda
\setminus \{ 0 \} \right) + \Delta = m(\Lambda) + \Delta \ge
f(\Lambda) - m(\Lambda) + 1 \ge 0$. Hence, all elements of $\Lambda'$
are non-negative, and it is easy to see that $\max \left( \bb{N}
\setminus \Lambda' \right) = \max \left( \bb{N} \setminus \Lambda
\right) + \Delta = f(\Lambda) + \Delta$.

For any non-zero $\lambda_1, \lambda_2 \in \Lambda'$, we have

\[ \lambda_1 + \lambda_2 \ge 2 \min \left( \Lambda' \setminus \{ 0 \}
\right) \]
\[ = 2 m(\Lambda) + 2 \Delta \]
\[ > f(\Lambda) + \Delta \]
\[ = \max \left( \bb{N} \setminus \Lambda' \right). \]

\noindent Hence, $\Lambda'$ is closed under addition, so it is a
numerical semigroup.
\end{proof}

\begin{lem} \label{non-effective bijection}
For a numerical semigroup $\Lambda$, let $L = L(\Lambda) = \{ x \in
[0, f(\Lambda) - m(\Lambda)] \mid m(\Lambda) + x \in \Lambda \}$. If
$f(\Lambda) < 2m(\Lambda)$, then $\lambda \in [f(\Lambda) + 1,
  f(\Lambda) + m]$ is an effective generator if and only if $\lambda -
2m(\Lambda) \not \in L + L$.
\end{lem}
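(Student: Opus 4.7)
The plan is to unwind the definition of ``effective generator'' and set up a direct correspondence between the nontrivial decompositions of $\lambda$ inside $\Lambda$ and the representations of $\lambda - 2m(\Lambda)$ as a sum of two elements of $L$.

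First, I would observe that $\lambda > f(\Lambda)$ forces $\lambda \in \Lambda$, so $\lambda$ is an effective generator precisely when it is a minimal generator, that is, when $\lambda$ cannot be written as $\lambda_1 + \lambda_2$ with $\lambda_1, \lambda_2 \in \Lambda \setminus \{0\}$.

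The key step is to pin down the range in which the $\lambda_i$ must live. Any such $\lambda_i$ satisfies $\lambda_i \ge m(\Lambda)$ by definition of multiplicity, and writing $\lambda_i = \lambda - \lambda_{3-i}$ together with $\lambda \le f(\Lambda) + m(\Lambda)$ yields $\lambda_i \le f(\Lambda)$. Thus $\lambda_1, \lambda_2 \in \Lambda \cap [m(\Lambda), f(\Lambda)]$, and setting $x_i = \lambda_i - m(\Lambda)$ gives $x_i \in L$ with $x_1 + x_2 = \lambda - 2m(\Lambda)$. Conversely, any expression $\lambda - 2m(\Lambda) = x_1 + x_2$ with $x_i \in L$ produces the decomposition $\lambda = (m(\Lambda) + x_1) + (m(\Lambda) + x_2)$ into nonzero semigroup elements. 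Hence $\lambda$ fails to be an effective generator if and only if $\lambda - 2m(\Lambda) \in L + L$, which is exactly the desired equivalence.

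The hypothesis $f(\Lambda) < 2m(\Lambda)$ enters mainly to place this lemma in the orderly setting where it will be applied; when $\lambda < 2m(\Lambda)$, the quantity $\lambda - 2m(\Lambda)$ is negative and so trivially not in $L + L$, while simultaneously no two elements of $\Lambda \setminus \{0\}$ can sum to a value less than $2m(\Lambda)$, making $\lambda$ automatically an effective generator. The argument is essentially structural bookkeeping, so I do not anticipate any real obstacle beyond carefully tracking the endpoints of the intervals $[m(\Lambda), f(\Lambda)]$ and $[0, f(\Lambda) - m(\Lambda)]$ and the implicit use of the fact that $\Lambda \setminus \{0\} \subseteq [m(\Lambda), \infty)$.
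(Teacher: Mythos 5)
Your proof is correct and follows essentially the same route as the paper: unwind the definition of effective generator, show any witnessing decomposition $\lambda = \lambda_1 + \lambda_2$ must have both summands in $\Lambda \cap [m(\Lambda), f(\Lambda)] = L + m(\Lambda)$, and subtract $2m(\Lambda)$ to obtain the $L+L$ characterization. One small remark worth noting: as your argument shows, the hypothesis $f(\Lambda) < 2m(\Lambda)$ is never actually invoked — the bounds $\lambda_i \ge m(\Lambda)$ and $\lambda \le f(\Lambda) + m(\Lambda)$ alone force $\lambda_i \le f(\Lambda)$, and the converse direction is immediate; the hypothesis is carried along because the lemma is only applied in the orderly regime, and because Corollary~\ref{non-effective corollary} (which cites this lemma) does genuinely need it.
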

\begin{proof}
By definition, $\lambda$ is an effective generator if and only if
there do not exist two non-zero elements $\lambda_1, \lambda_2 \in
\Lambda$ such that $\lambda = \lambda_1 + \lambda_2$. Since $\lambda
\le f(\Lambda) + m(\Lambda)$, and $\lambda_1, \lambda_2 \ge
m(\Lambda)$, it follows that we need only consider the situation where
$\lambda_1, \lambda_2 \le f(\Lambda)$.

In other words, we are only concerned with the case $\lambda_1,
\lambda_2 \in L + m(\Lambda)$, so $\lambda$ is an effective generator
if and only if $\lambda \not \in \left( L + m(\Lambda) \right) +
\left( L + m(\Lambda) \right)$. Subtracting $2m(\Lambda)$ gives the
result.
\end{proof}

\begin{cor} \label{non-effective corollary}
Let $\Lambda$ be a numerical semigroup, and suppose that $\Lambda' =
\tau(\Lambda, \Delta)$ is also a numerical semigroup. Suppose further
that $f(\Lambda) < 2m(\Lambda)$ and $f(\Lambda') <
2m(\Lambda')$. Then, $\Lambda$ is strongly descended if and only if
$\Lambda'$ is, and $m(\Lambda) - h(\Lambda) = m(\Lambda') -
h(\Lambda')$.
\end{cor}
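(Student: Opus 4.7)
The plan is to deduce both claims from Lemma \ref{non-effective bijection} by exploiting the fact that $\tau$ preserves the set $L$, and hence $L+L$. First I would verify that $L(\Lambda') = L(\Lambda)$: since $m(\Lambda') = m(\Lambda) + \Delta$ and $f(\Lambda') = f(\Lambda) + \Delta$, we have $f(\Lambda') - m(\Lambda') = f(\Lambda) - m(\Lambda)$, so both sets are indexed by the same range $[0, f(\Lambda) - m(\Lambda)]$, and by the definition of $\tau$ the element $m(\Lambda') + x = m(\Lambda) + x + \Delta$ lies in $\Lambda'$ precisely when $m(\Lambda) + x$ lies in $\Lambda$.

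To obtain $m(\Lambda) - h(\Lambda) = m(\Lambda') - h(\Lambda')$, I would apply Lemma \ref{non-effective bijection} to each semigroup, rewriting $m - h$ as the number of elements of $L+L$ in the interval $I(\Lambda) := [f(\Lambda) - 2m(\Lambda) + 1, f(\Lambda) - m(\Lambda)]$, and analogously for $\Lambda'$. A direct substitution using $f(\Lambda') - m(\Lambda') = f(\Lambda) - m(\Lambda)$ gives $I(\Lambda') = [f(\Lambda) - 2m(\Lambda) + 1 - \Delta,\, f(\Lambda) - m(\Lambda)]$, so the two intervals share the same upper endpoint and differ only at the bottom. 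Their symmetric difference is a block of integers bounded above by $\max(f(\Lambda) - 2m(\Lambda),\, f(\Lambda') - 2m(\Lambda'))$, which is at most $-1$ by the hypotheses $f(\Lambda) < 2m(\Lambda)$ and $f(\Lambda') < 2m(\Lambda')$. Since $L + L \subseteq \bb{N}$, this symmetric difference is disjoint from $L+L$, so the two counts coincide.

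For the strongly descended claim, Lemma \ref{Bras-Amoros} reduces the question in each case to whether $f + m$ is an effective generator. Applying Lemma \ref{non-effective bijection} with $\lambda = f(\Lambda) + m(\Lambda)$ turns the condition into $f(\Lambda) - m(\Lambda) \notin L + L$, and the corresponding condition for $\Lambda'$ is $f(\Lambda') - m(\Lambda') \notin L + L$; these are the identical condition since $f - m$ is $\tau$-invariant and $L + L$ is the same set for both. The only bookkeeping subtlety is that $\Delta$ may take either sign (subject to $\Delta \ge f(\Lambda) - 2m(\Lambda) + 1$ coming from Lemma \ref{tau semigroup condition}), so whichever of $I(\Lambda), I(\Lambda')$ contains the other depends on the sign; but the bound above handles both cases uniformly, so this is not a real obstacle.
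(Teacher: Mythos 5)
Your proof is correct and rests on the same core idea as the paper's: since $L(\Lambda) = L(\Lambda')$, the sets of non-effective generators are controlled by the common set $L+L$, and the hypotheses $f < 2m$, $f' < 2m'$ ensure the relevant "indexing" ranges agree where it matters. The paper phrases this as an explicit bijection $\lambda \mapsto \lambda + 2\Delta$ between the sets of non-effective generators $K$ and $K'$, while you instead observe that $m - h$ and $m' - h'$ both equal $\lvert (L+L) \cap I \rvert$ for two intervals $I$ sharing the same top endpoint and differing only in negative integers, which $L+L$ cannot contain; this is a mild repackaging of the same argument and is, if anything, a bit more transparent.
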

\begin{proof}
Using the notation of Lemma \ref{non-effective bijection}, note that
$L(\Lambda) = L(\Lambda')$, so let us use $L$ to denote simultaneously
$L(\Lambda)$ and $L(\Lambda')$. Let $K$ denote the set of numbers in
$[f(\Lambda) + 1, f(\Lambda) + m(\Lambda)]$ that are not effective
generators of $\Lambda$, and similarly, let $K'$ denote the set of
numbers in $[f(\Lambda') + 1, f(\Lambda') + m(\Lambda')]$ that are not
effective generators of $\Lambda'$.

Suppose $\lambda$ is an element of $K$. Then, by Lemma
\ref{non-effective bijection}, $\lambda - 2m(\Lambda) \in L + L$. This
implies first of all that $\lambda \ge 2m(\Lambda)$, and so by Lemma
\ref{tau basic properties}, $\lambda + 2 \Delta \ge 2m(\Lambda') \ge
f(\Lambda') + 1$. Also by Lemma \ref{tau basic properties}, we know
from $\lambda \le f(\Lambda) + m(\Lambda)$ that $\lambda + 2 \Delta
\le f(\Lambda') + m(\Lambda')$.

Thus, $\lambda + 2 \Delta \in [f(\Lambda') + 1, f(\Lambda') +
  m(\Lambda')]$, and $\lambda + 2 \Delta - 2m(\Lambda') = \lambda -
2m(\Lambda) \in L + L$, which implies by Lemma \ref{non-effective
  bijection} that $\lambda + 2 \Delta$ is not an effective generator
of $\Lambda'$, and so $\lambda + 2 \Delta \in K'$.

Consequently, $\lambda \mapsto \lambda + 2 \Delta$ gives an injection
of $K$ into $K'$. Since all of the arguments above still hold when the
roles of $\Lambda$ and $\Lambda'$ are reversed, we find that this
injection is in fact a bijection.

By Lemma \ref{Bras-Amoros}, $\Lambda$ is strongly descended if and
only if $f(\Lambda) + m(\Lambda) \not \in K$, which occurs if and only
if $f(\Lambda) + m(\Lambda) + 2 \Delta = f(\Lambda') + m(\Lambda')
\not \in K'$. This in turn is equivalent to $\Lambda'$ being strongly
descended, proving the first claim of the corollary. The second claim
follows upon noting that $m(\Lambda) - h(\Lambda) = |K| = |K'| =
m(\Lambda') - h(\Lambda')$.
\end{proof}

We next prove two results having to do with counting the number of
certain semigroups. Let $M(g, h)$ denote the set of strongly descended
numerical semigroups of genus $g$ having $h$ effective
generators. Then, the following lemma holds.

\begin{lem} 
$|M(g, h)| = |M(2g - 2h + 1, g - h + 1)|$ whenever $g < 2h$.
\end{lem}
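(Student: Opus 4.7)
The plan is to construct an explicit bijection $\phi : M(g,h) \to M(2g-2h+1, g-h+1)$ via the shift map $\tau$ of Lemma \ref{tau basic properties}. Since $g < 2h$, every $\Lambda \in M(g,h)$ is orderly, so Proposition \ref{f < 2m} gives $f(\Lambda) < 2m(\Lambda)$, Corollary \ref{m >= f + h - g} gives $f(\Lambda) \le m(\Lambda) + g - h$, and the bound $h(\Lambda) \le m(\Lambda)$ noted in the proof of Proposition \ref{f < 2m} also applies. These three facts are essentially the only tools needed.

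Take $\Delta = g - 2h + 1$ (so $\Delta \le 0$) and set $\phi(\Lambda) = \tau(\Lambda, \Delta)$. To apply Lemma \ref{tau semigroup condition} one needs $f(\Lambda) < 2m(\Lambda) + \Delta$; substituting $f \le m + g - h$ collapses this to $m \ge h$, which holds. Lemma \ref{tau basic properties} then gives $g(\phi(\Lambda)) = g + \Delta = 2g - 2h + 1$, and the same estimate shows $f(\phi(\Lambda)) < 2m(\phi(\Lambda))$, so Corollary \ref{non-effective corollary} applies: $\phi(\Lambda)$ is strongly descended, and $m(\phi(\Lambda)) - h(\phi(\Lambda)) = m(\Lambda) - h(\Lambda)$, which forces $h(\phi(\Lambda)) = h + \Delta = g - h + 1$. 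Hence $\phi(\Lambda) \in M(2g-2h+1, g-h+1)$ as required.

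For the inverse, given $\Lambda' \in M(2g-2h+1, g-h+1)$, observe that $2h(\Lambda') = 2g - 2h + 2 > g(\Lambda')$, so $\Lambda'$ is orderly; Proposition \ref{f < 2m} and Corollary \ref{m >= f + h - g} then yield $f(\Lambda') \le m(\Lambda') + g - h$ after plugging in the values $g(\Lambda') = 2g-2h+1$ and $h(\Lambda') = g-h+1$. Setting $\psi(\Lambda') = \tau(\Lambda', -\Delta)$, the hypothesis of Lemma \ref{tau semigroup condition} becomes $f(\Lambda') < 2m(\Lambda') + 2h - g - 1$, which reduces via the same substitution to $m(\Lambda') \ge 2g - 3h + 1$; this follows from $m(\Lambda') \ge h(\Lambda') = g - h + 1$ combined with $g < 2h$. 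A symmetric application of Corollary \ref{non-effective corollary} puts $\psi(\Lambda')$ in $M(g,h)$, and since $\tau(\tau(\cdot, \Delta), -\Delta)$ is the identity on semigroups, $\phi$ and $\psi$ are mutually inverse, giving the desired equality of cardinalities.

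I anticipate no real obstacle: every verification reduces to one of three simple facts, namely $h \le m$, the standing hypothesis $g < 2h$, or Corollary \ref{m >= f + h - g}. The only bookkeeping to watch is that both occurrences of the condition $f < 2m$ required by Corollary \ref{non-effective corollary} are checked in each direction, but these follow at once from orderliness together with the shift estimate above.
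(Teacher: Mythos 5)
Your proposal is correct and follows essentially the same route as the paper: the paper constructs the inverse map, namely $\tau(\cdot,\,2h-g-1)$ from $M(2g-2h+1,\,g-h+1)$ to $M(g,h)$, whereas you construct $\tau(\cdot,\,g-2h+1)$ in the opposite direction, but both use the same shift together with Proposition \ref{f < 2m}, Corollary \ref{m >= f + h - g}, Lemma \ref{tau semigroup condition}, and Corollary \ref{non-effective corollary}. One small slip worth noting: in the reverse direction you only need $f(\Lambda') < 2m(\Lambda')$ (which holds since $\Lambda'$ is orderly) together with $-\Delta \ge 0$, so invoking Corollary \ref{m >= f + h - g} there is superfluous, and your intermediate threshold $m(\Lambda') \ge 2g-3h+1$ should really read $m(\Lambda') > 2g-3h+1$ for the strict inequality of Lemma \ref{tau semigroup condition}, though the bound $m(\Lambda') \ge g-h+1$ covers it.
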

\begin{proof}
Let $\Delta = 2h - g - 1$, and note that $\Delta \ge 0$. It suffices
to show that $\Lambda \mapsto \tau(\Lambda, \Delta)$ is a bijection
from $M(2g - 2h + 1, g - h + 1)$ to $M(g, h)$, with inverse given by
$\Lambda \mapsto \tau(\Lambda, -\Delta)$. Note that the semigroups in
$M(g, h)$ and $M(2g - 2h + 1, g - h + 1)$ are orderly.

If $\Lambda \in M(2g - 2h + 1, g - h + 1)$, then $f(\Lambda) <
2m(\Lambda) \le 2m(\Lambda) + \Delta$ by Proposition \ref{f <
  2m}. Thus, by Lemma \ref{tau semigroup condition}, $\Lambda' =
\tau(\Lambda, \Delta)$ is a semigroup. In addition, $f(\Lambda') =
f(\Lambda) + \Delta < 2m(\Lambda) + 2 \Delta = 2m(\Lambda')$, so
Corollary \ref{non-effective corollary} applies. Thus, $\Lambda'$ is
strongly descended, and $h(\Lambda') = m(\Lambda') - m(\Lambda) +
h(\Lambda) = \Delta + (g - h + 1) = h$. By Lemma \ref{tau basic
  properties}, we also know that $g(\Lambda') = g(\Lambda) + \Delta =
g$. Hence, $\Lambda' \in M(g, h)$.

Next, suppose that $\Lambda \in M(g, h)$. Then, by Corollary \ref{m >=
  f + h - g} and the general fact that $f(\Lambda) \ge g(\Lambda)$, we
find that 

\[ f(\Lambda) \le 2f(\Lambda) - g(\Lambda) \]
\[ \le 2m(\Lambda) + g(\Lambda) - 2h(\Lambda) \]
\[ < 2m(\Lambda) - \Delta. \]

\noindent Therefore, Corollary \ref{non-effective corollary} applies,
and so it can be verified that $\Lambda' = \tau(\Lambda, -\Delta)$
belongs to $M(2g - 2h + 1, g - h + 1)$ by the same arguments as
before. We thus conclude that $M(g, h)$ and $M(2g - 2h + 1, g - h +
1)$ are in bijection, proving the lemma.
\end{proof}

\begin{lem}
$\sum_{i = 0}^\infty |M(2i + 1, i + 1)| \varphi^{-i}$ converges.
\end{lem}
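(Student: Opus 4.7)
The plan is to express $\sum_{i=0}^\infty |M(2i+1, i+1)| \varphi^{-i}$ as a sum over ``canonical'' semigroups (orderly, with $g = 2h - 1$) weighted by $\varphi^{h-g}$, partition by the quantity $d := f - m$, and bound each piece using the main lemma transported along $\tau$-orbits. Writing $\mathcal{C} := \bigcup_{i \ge 0} M(2i+1, i+1)$ and noting that $\Lambda \in M(2i+1, i+1)$ satisfies $h(\Lambda) - g(\Lambda) = -i$, the target equals $\sum_{\Lambda \in \mathcal{C}} \varphi^{h(\Lambda) - g(\Lambda)}$.

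First I would check that every $\Lambda \in \mathcal{C}$ has $d(\Lambda) \ge 1$. Corollary \ref{m >= f + h - g} gives $d \le g - h$. The case $d = 0$ is impossible since $m \in \Lambda$ but $f \notin \Lambda$, and the case $d = -1$ forces $\Lambda = \{0, m, m+1, \dots\}$, which has $(g, h) = (m-1, m)$ and so never satisfies $g = 2h - 1$ when $m \ge 1$. Hence only $d \ge 1$ contributes.

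Next, for each fixed $d \ge 1$ I would bound $T_d := \sum_{\Lambda_c \in \mathcal{C},\, d(\Lambda_c) = d} \varphi^{-i(\Lambda_c)}$. For any such $\Lambda_c$ with $m(\Lambda_c) =: m_c$ and any $m \ge m_c$, I claim that $\tau(\Lambda_c, m - m_c)$ lies in $\mathcal{S}(m, m+d)$ and carries the same $h - g$ as $\Lambda_c$. This combines Proposition \ref{f < 2m} (to verify the hypothesis of Lemma \ref{tau semigroup condition}), Lemma \ref{tau basic properties} (to track $m$, $f$, $g$), and Corollary \ref{non-effective corollary} (to preserve strong-descendedness and the invariance of $m - h$, hence of $h - g$). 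Because each orderly semigroup has a uniquely determined canonical form (by the previous bijection lemma), distinct canonicals $\Lambda_c$ lie in disjoint $\tau$-orbits and so produce distinct shifted images. Therefore the map $\Lambda_c \mapsto \tau(\Lambda_c, m - m_c)$ injects $\{\Lambda_c \in \mathcal{C} : d(\Lambda_c) = d,\, m_c \le m\}$ into $\mathcal{S}(m, m+d)$, and the main lemma gives
\[
\sum_{\Lambda_c \in \mathcal{C},\, d(\Lambda_c) = d,\, m_c \le m} \varphi^{-i(\Lambda_c)} \le \sum_{\Lambda \in \mathcal{S}(m, m+d)} \varphi^{h-g} \le 5 d \left( \frac{1.618}{\varphi} \right)^{d-1}.
\]
Since this bound is independent of $m$, letting $m \to \infty$ by monotone convergence yields $T_d \le 5d(1.618/\varphi)^{d-1}$.

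Summing over $d$,
\[
\sum_{i=0}^\infty |M(2i+1, i+1)| \varphi^{-i} \le \sum_{d=1}^\infty 5d \left( \frac{1.618}{\varphi} \right)^{d-1} = \frac{5}{\left(1 - 1.618/\varphi \right)^2} < \infty,
\]
because $1.618 < \varphi$. The main obstacle is the middle step: simultaneously verifying that $\tau(\Lambda_c, m - m_c)$ has the correct multiplicity, Frobenius number, strong-descendedness, and efficacy to land in $\mathcal{S}(m, m+d)$ with the right weight, and that distinct canonicals produce distinct images---though all of these follow by carefully stitching together the earlier lemmas.
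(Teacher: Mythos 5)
Your proof is correct, but it takes a genuinely different route from the paper. The paper proceeds by over-counting: it first shows that every $\Lambda \in M(2i+1, i+1)$ satisfies $h(\Lambda) \ge 2m(\Lambda) - f(\Lambda) - 1$, deduces that $f(\Lambda) \le 3 + 4\bigl(f(\Lambda) - m(\Lambda)\bigr)$, and then dominates the sum by $\sum_{(m,f)} \sum_{\Lambda \in \mathcal{S}(m,f)} \varphi^{h-g}$, where the outer sum runs over the (finitely many) pairs $(m,f)$ permitted by that inequality; applying Lemma \ref{main lemma} to each $\mathcal{S}(m,f)$ and using the fact that at most $3 + 4k$ pairs have $f - m = k$ gives a convergent series $\sum_k (3+4k) \cdot 5k(1.618/\varphi)^{k-1}$. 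You instead exploit the $\tau$-shift machinery already developed: for fixed $d = f - m$, the map $\Lambda_c \mapsto \tau(\Lambda_c, m - m_c)$ embeds all the canonical semigroups with difference $d$ and multiplicity $\le m$ into the \emph{single} set $\mathcal{S}(m, m+d)$, with the weight $\varphi^{h-g}$ preserved (via Lemma \ref{tau basic properties} and Corollary \ref{non-effective corollary}) and injectivity guaranteed by the uniqueness of canonical form. This bypasses the paper's $h \ge 2m - f - 1$ estimate and the $(3+4k)$ pair-count entirely, yielding the cleaner bound $\sum_d 5d(1.618/\varphi)^{d-1}$. The trade-off is that you must carefully verify well-definedness and injectivity of the $\tau$-shift; your sketch of the injectivity (``distinct canonicals lie in disjoint $\tau$-orbits'') is correct but would benefit from the explicit observation that if $\tau(\Lambda_c, \delta) = \Lambda_c'$ with both in $\mathcal{C}$ then the conditions $g = 2h-1$ force $\delta = 2(i - i')$ and $\delta = i - i'$ simultaneously, hence $\delta = 0$. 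Both approaches are sound; yours is arguably the more elegant use of the machinery the paper sets up.
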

\begin{proof}
Let $\Lambda$ be any semigroup in $M(2i + 1, i + 1)$. Each number in
$[f(\Lambda) + 1, 2m(\Lambda) - 1]$ is an effective generator of
$\Lambda$ (since it cannot be the sum of two non-zero elements of
$\Lambda$), so $h(\Lambda) \ge 2m(\Lambda) - f(\Lambda) - 1$. Thus, we
have

\[ f(\Lambda) \ge g(\Lambda) = 2i + 1 = 2(i + 1) - 1 \]
\[ = 2h(\Lambda) - 1 \ge 4m(\Lambda) - 2f(\Lambda) - 3, \]

\noindent which upon rearranging yields $3 + 4(f(\Lambda) -
m(\Lambda)) \ge f(\Lambda)$. We therefore find that

\[ \sum_{i = 0}^\infty |M(2i + 1, i + 1)| \varphi^{-i} = \sum_{i = 0}^\infty \sum_{\Lambda \in M(2i + 1, i + 1)} \varphi^{-g(\Lambda) - h(\Lambda)} \]
\[ \le \sum_{\substack{f, m \\ f \ge m - 1 \\ f \le 3 + 4(f - m)}} \sum_{\Lambda \in \mathcal{S}(m, f)} \varphi^{-g(\Lambda) - h(\Lambda)} \]
\[ \le \varphi \sum_{\substack{f, m \\ f \ge m - 1 \\ f \le 3 + 4(f - m)}} (f - m) \left( \frac{1.618}{\varphi} \right)^{f - m - 1} \]
\[ \le \varphi \sum_{k = 0}^\infty (3 + 4k)k \left( \frac{1.618}{\varphi} \right)^{k - 1} < \infty. \]

\noindent This proves the lemma.
\end{proof}

Having established several results relating to orderly semigroups, we
are ready to estimate $n_{g, 2}$. We have

\[ n_{g, 2} = \sum_{\Lambda \in \mathcal{S}_2} N_g(\Lambda) \]
\[ = \sum_{0 \le i < \frac{g}{3}} \sum_{\substack{\Lambda \in \mathcal{S}_2 \\ g(\Lambda) - h(\Lambda) = i}} N_g(\Lambda) \]
\[ \le \sum_{0 \le i < \frac{g}{3}} \sum_{\substack{\Lambda \in \mathcal{S}_2 \\ g(\Lambda) - h(\Lambda) = i}} \binom{h(\Lambda)}{g - g(\Lambda)} \]
\[ \le \sum_{0 \le i < \frac{g}{3}} \sum_{i < h \le g - i} |M(i + h, h)| \binom{h}{g - i - h} \]
\[ = \sum_{0 \le i < \frac{g}{3}} |M(2i + 1, i + 1)| \binom{h}{g - i - h} \]
\[ = \sum_{0 \le i < \frac{g}{3}} |M(2i + 1, i + 1)| F_{g - i + 1} \]
\[ \le \varphi^g \sum_{0 \le i < \frac{g}{3}} |M(2i + 1, i + 1)| \varphi^{-i}. \]

The sum in the last expression is bounded, since $\sum_{i = 0}^\infty
|M(2i + 1, i + 1)| \varphi^{-i}$ converges. Thus, we find that $n_{g,
  2} \varphi^{-g}$ is bounded.

\subsection{Estimating $n_{g, 3}$}

Consider a numerical semigroup $\Lambda \in \mathcal{S}_3$. We first
claim that $h(\Lambda) \ge 2m(\Lambda) - f(\Lambda) - 1$.

If $f(\Lambda) + 1 \ge 2m(\Lambda)$, the claim holds
trivially. Otherwise, it is easy to check that the numbers in the
interval $[f(\Lambda) + 1, 2m(\Lambda) - 1]$ are all effective
generators. The interval $[f(\Lambda) + 1, 2m(\Lambda) - 1]$ contains
$2m(\Lambda) - f(\Lambda) - 1$ numbers, so the claim is proven.

Since $\Lambda \in \mathcal{S}_3$, we have $g(\Lambda) - h(\Lambda)
\ge \frac{g}{3}$. Combining this with the claim above yields 

\[ g(\Lambda) \ge \frac{g}{3} + 2m(\Lambda) - f(\Lambda) - 1. \]

\noindent Noting that $g(\Lambda) \le f(\Lambda)$, we can rearrange this to
obtain

\[ f(\Lambda) - m(\Lambda) \ge \frac{g}{6} - \frac{1}{2} > \frac{g}{6} - 1. \]

\noindent Also, note that if $N_g(\Lambda) > 0$, then we must have
$m(\Lambda) \le g(\Lambda) + 1 \le g + 1$. Combining these facts with
Lemma \ref{main lemma}, we find that

\[ n_{g, 3} = \sum_{\Lambda \in \mathcal{S}_3} N_g(\Lambda) \]
\[ \le \sum_{\substack{\Lambda \in \mathcal{S}_3 \\ m(\Lambda) \le g +
    1}} \varphi^{g - g(\Lambda) + h(\Lambda)} \]
\[ \le \varphi^g \sum_{k > \frac{g}{6} - 1} \sum_{\substack{f - m = k
    \\ m \le g + 1}} \sum_{\Lambda \in \mathcal{S}(m, f)}
\varphi^{-g(\Lambda) + h(\Lambda)} \]
\[ \le 5 \varphi^g \sum_{k > \frac{g}{6} - 1} \sum_{\substack{f - m =
    k \\ m \le g + 1}} k \left( \frac{1.618}{\varphi} \right)^{k -
  1}. \]
\[ \le 5 \varphi^g (g + 1) \sum_{k > \frac{g}{6}} k \left(
  \frac{1.618}{\varphi} \right)^k \]
\[ = o(\varphi^g). \]

\noindent Since we know that $n_g$ grows at least as fast as
$\varphi^g$, this shows that $n_{g, 3}$ makes a negligible
contribution as $g \rightarrow \infty$.

\subsection{Estimating $n_g$}

Now that we have estimated $n_{g, 2}$ and $n_{g, 3}$ separately, it is
possible to give an estimate of $n_g$. Recall that we showed 

\[ n_{g, 2} \le t_g \]
\[ n_{g, 2} = O(\varphi^g) \]
\[ n_{g, 3} = o(\varphi^g). \]

\noindent The last two bounds give $t_g \le n_g = n_{g, 2} + n_{g, 3}
= O(\varphi^g)$, which proves Conjecture \ref{t_g convergence
  conjecture}. Furthermore, we find that $n_g - t_g = n_{g, 3} + n_{g,
  2} - t_g \le n_{g, 3} = o(\varphi^g)$, and it is known that $n_g \ge
\varphi^g$. This proves $\lim_{g \rightarrow \infty} \frac{t_g}{n_g} =
1$, which is Conjecture \ref{t_g/n_g conjecture}. As noted before,
this implies the main result, at least having assumed Lemma \ref{main
  lemma}. In the next two sections, we set out to prove Lemma
\ref{main lemma}.

\section{Some technical preliminaries}

Before proving Lemma \ref{main lemma}, we need to establish another
inequality not directly involving numerical semigroups. Let $S$ be any
finite set of positive integers, and let $m$, $f$, and $d$ be positive
integers satisfying $d < f$. (For the purposes of this section, these
numbers can be considered to bear no relation to numerical semigroups,
but we will later apply our results to the case where $m$ is the
multiplicity and $f$ is the Frobenius number of a numerical
semigroup.)

We say a subset $U \subset S$ is $(m, f, d)$-\emph{admissible} if no
two elements of $U$ sum to $f + m$, and if $x \in U$ and $x + m \in
S$, then $x + m \in U$. Let $\mathcal{A}_{(m, f, d)}(S)$ denote the
set of all $(m, f, d)$-admissible subsets of $S$. Where there is no
risk of confusion, we will drop the $(m, f, d)$ and simply say that
$U$ is an admissible subset of $S$, and we will write $\mathcal{A}(S)$
for $\mathcal{A}_{(m, f, d)}(S)$.

For an admissible subset $U \subset S$, let $E(U, S)$ denote the set
of all integers $x \in S$ such that $x, x + m \not \in U$, but $x - d
\in U$. Let $E'(U, S)$ denote the set of all integers $x \in U$, such
that $x + m \in U$.\footnote{The astute reader may notice that $E'(U,
  S)$ is actually independent of $S$. However, it is convenient to
  write it in this way for sake of consistency with the notation for
  $E(U, S)$ and as a reminder that $U$ is admissible as a subset of
  $S$.} Define $s(U, S) = |E(U, S)| - |E'(U, S)|$. When it is clear
from context what $S$ is, we will simply write $E(U)$, $E'(U)$, and
$s(U)$.

\noindent Define the $(m, f, d)$-\emph{weight} of a set $S$ to be 

\[ \sum_{U \in \mathcal{A}_{(m, f, d)}(S)} \varphi^{-s(U, S)}. \]

\noindent We will denote it by $w_{(m, f, d)}(S)$, or simply $w(S)$
when it is clear what the values of $m$, $f$, and $d$ are. If $S$ is
empty, we define $w(S)$ to be $1$. The main result of this section is
the following lemma.

\begin{lem} \label{interval weight} Let $m$, $f$, and $d$ be positive
  integers such that $d < f$, and let $S = \{ m + d + 1, m + d + 2,
  \ldots , f - 1 \}$. Then,

  \[ w_{(m, f, d)}(S) \le 1.618^{|S| + d + 2}. \]
\end{lem}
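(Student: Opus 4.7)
I will attempt a proof by strong induction on $|S|$, or equivalently on $f$ with $(m, d)$ fixed, since $|S| = f - m - d - 1$. The base case $|S| = 0$ is immediate: $\mathcal{A}(S) = \{\emptyset\}$ and $s(\emptyset) = 0$, so $w(S) = 1 \le 1.618^{d + 2}$.

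\textbf{Inductive step.} In the inductive step I will remove the largest element $x_0 = f - 1$ from $S$ to obtain $S' = \{m + d + 1, \ldots, f - 2\}$, and partition $\mathcal{A}(S)$ according to whether $x_0 \in U$. In Case A ($x_0 \in U$): since $x_0 + m \notin S$ and $(f + m) - x_0 = m + 1 < \min S$, the element $x_0$ imposes no new admissibility constraints, so $U \mapsto U \setminus \{x_0\}$ gives a bijection onto those admissible subsets of $S$ that are contained in $S'$; the change of $s(U, S)$ from $s(U \setminus \{x_0\}, S')$ is controlled by whether $f - m - 1 \in U$ (which contributes one extra element to $E'$) and whether $f - m - 1 - d \in U$ (which removes $x = f - m - 1$ as a candidate for $E$). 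In Case B ($x_0 \notin U$): then $U \subseteq S'$ subject to the extra constraint $f - m - 1 \notin U$ (forced by the modular closure in $S$), and $s(U, S) = s(U, S') + [f - 1 - d \in U]$, since $x = f - 1$ is the only new $E$-candidate introduced by passing from $S'$ to $S$. Summing the two cases should yield a recurrence bounding $w_{(m, f, d)}(S)$ by a combination of weights of smaller systems. The base $1.618$ in the target bound is chosen to sit just below $\varphi$, giving precisely the slack needed for such a Fibonacci-like recurrence to close inductively.

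\textbf{Main obstacle.} The principal difficulty is that $S'$ does not itself fit the form of the lemma: the admissibility of a subset of $S'$ still refers to the sum $f + m$ rather than $(f - 1) + m$, so $w_{(m, f, d)}(S')$ is not directly covered by the statement as phrased. I therefore expect to need a stronger claim, applying to arbitrary subsets of an interval together with a suitably generalized bound — this is likely why $\mathcal{A}_{(m, f, d)}(S)$ was set up for general $S$ in the preliminaries rather than only for the interval. Beyond this, the three sources of coupling between distant elements — the modular closure under $+m$, the sum condition at $f + m$, and the shift $-d$ in the definition of $E$ — operate at three distinct scales, so the case analysis and the tracking of how $s(U, S)$ evolves as elements are added or removed must be handled delicately to avoid double-counting or missing cross terms.
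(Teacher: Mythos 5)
Your outline is an honest plan, not a proof, and the obstacle you flag yourself is in fact fatal to the plan as stated: because the admissibility condition, the set $E(U,S)$, and the set $E'(U,S)$ all depend on the external parameters $(m,f,d)$ rather than on $S$ alone, removing $f-1$ from $S$ does not produce a smaller instance of the lemma. The quantity $w_{(m,f,d)}(S')$ you would need to bound recursively is not $w_{(m,f-1,d)}(\{m+d+1,\ldots,f-2\})$, and no simple change of variable makes it so. You would therefore need a strictly stronger induction hypothesis applying to more general $(S,m,f,d)$, but you have not specified one, and it is not clear what the right formulation is. Without that, the induction never gets off the ground.

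Even granting such a generalization, the claim that the gap between $1.618$ and $\varphi$ ``gives precisely the slack needed for such a Fibonacci-like recurrence to close'' is unsubstantiated and, judging by what actually has to happen, unlikely to be true for a one-element-at-a-time recurrence. The three couplings you correctly identify (modular closure under $+m$, the sum constraint at $f+m$, and the $-d$ shift in $E$) act at three different ranges, and peeling off the single largest element does not decouple any of them cleanly; the bookkeeping of how $s(U,S)$ changes under insertion or deletion of one element branches into several interacting cases that compound as the interval grows. The paper's proof takes a structurally different route: it partitions $[m,f]$ into residue classes $S(r)$ modulo $m$ (which eliminates the $+m$ coupling within each block), proves a submultiplicativity lemma so that $w(\bigcup S(r)) \le \prod w(S(r))$, and then bounds each block separately. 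Crucially, when $I(r)\le 2$ the per-block bounds exceed $1.618^{|S(r)|}$, so one must chain together $S(r), S(r+d), S(r+2d), \ldots$ into $T(r)$ and control the chained weight with a transfer-matrix argument whose growth rate must be checked numerically; the surplus from the $I(r)\le 2$ blocks is then absorbed by deficits from other blocks in a case analysis on $I(0)$ and on the relative sizes of $d$, $\ell$, and $m$. None of this structure is visible from the one-element induction, and the tiny margin between $1.618$ and $\varphi$ leaves essentially no room for a looser recurrence. In short, your proposal correctly diagnoses why a naive induction fails but does not supply the idea (residue-class decomposition plus submultiplicativity plus transfer matrices) that makes the bound actually provable, so as written it has a genuine gap.
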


The rest of this section is devoted to proving Lemma \ref{interval
  weight}. To see how Lemma \ref{interval weight} is used to prove
Lemma \ref{main lemma}, the reader may wish to skip ahead to the next
section. For the remainder of the section, let $m$, $f$, and $d$ be
fixed positive integers with $d < f$. We first observe that truncating
a set from below can only decrease its weight. More precisely, for any
set $S$, define $V_k(S) = \{ s \in S \mid s > k \}$. Then, the
following inequality holds.

\begin{lem} \label{subtruncation}
  For any set $S$ of positive integers and any $k$, $w(V_k(S)) \le
  w(S)$.
\end{lem}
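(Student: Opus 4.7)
The plan is to prove $w(V_k(S)) \le w(S)$ by a direct term-by-term comparison, using only the identity inclusion $\mathcal{A}(V_k(S)) \hookrightarrow \mathcal{A}(S)$. I would proceed in two steps: first, check that every admissible subset of $V_k(S)$ is also admissible in $S$; and second, verify that the exponent $s(U, \cdot)$ is unchanged under this enlargement of the ambient set.

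For the first step, I would show $\mathcal{A}(V_k(S)) \subseteq \mathcal{A}(S)$. The ``no pair summing to $f + m$'' condition on $U$ is intrinsic to $U$ and so carries over unchanged. For the upward-closure condition, I would note that any $x \in U \subseteq V_k(S)$ satisfies $x > k$, so $x + m > k$ as well; hence $x + m \in S$ forces $x + m \in V_k(S)$, and admissibility of $U$ in $V_k(S)$ then yields $x + m \in U$.

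For the second step, I would show $s(U, V_k(S)) = s(U, S)$ for every $U \in \mathcal{A}(V_k(S))$. Since $E'(U, \cdot)$ depends only on $U$, the two values of $E'$ coincide. For $E$, the inclusion $E(U, V_k(S)) \subseteq E(U, S)$ is immediate. The reverse is where the positivity of $d$ enters: if $x \in E(U, S)$, then by definition $x - d \in U \subseteq V_k(S)$, so $x - d > k$; since $d \ge 1$, this forces $x > k + d \ge k + 1$, so $x \in V_k(S)$. Thus $E(U, S) \subseteq V_k(S)$, giving $E(U, S) = E(U, V_k(S))$. Combining both steps yields
\[ w(V_k(S)) = \sum_{U \in \mathcal{A}(V_k(S))} \varphi^{-s(U, V_k(S))} = \sum_{U \in \mathcal{A}(V_k(S))} \varphi^{-s(U, S)} \le \sum_{U \in \mathcal{A}(S)} \varphi^{-s(U, S)} = w(S). \]
I do not expect a genuine obstacle here; the whole argument reduces to the observation that requiring $x - d \in U$ forces $x$ to lie above $k$, so no induction or bijective compensation is needed to make up for ``missing'' admissible sets.
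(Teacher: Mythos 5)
Your proof is correct and matches the paper's argument: you show $\mathcal{A}(V_k(S)) \subseteq \mathcal{A}(S)$ and then verify $E(U, V_k(S)) = E(U, S)$ and $E'(U, V_k(S)) = E'(U, S)$, using the key observation that $x - d \in U \subseteq V_k(S)$ forces $x > k$. Your write-up is slightly more explicit than the paper's on why admissibility transfers to the larger ambient set (the upward-closure check), but the approach and mechanism are the same.
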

\begin{proof}
  First of all, since $V_k(S) \subset S$, it is clear that any
  admissible subset $U$ of $V_k(S)$ is also an admissible subset of
  $S$.

  Clearly, $E(U, V_k(S)) \subset E(U, S)$. The reverse is also true;
  let $x$ be any element of $E(U, S)$. By definition, $x \in S$ and $x
  - d \in U$. Since $U \subset V_k(S)$, it follows that $x > k$, and
  so $x \in V_k(S)$. It then follows that $x \in E(U, V_k(S))$. Hence,
  $E(U, V_k(S)) = E(U, S)$.

  Similarly, it is easy to check that $E'(U, V_k(S)) = E'(U,
  S)$. Thus,

  \[ w(V_k(S)) = \sum_{U \in \mathcal{A}(V_k(S))} \varphi^{-s(U,
    V_k(S))} = \sum_{U \in \mathcal{A}(V_k(S))} \varphi^{-s(U, S)} \]
  \[ \le \sum_{U \in \mathcal{A}(S)} \varphi^{-s(U, S)} = w(S), \]

  \noindent as desired.
\end{proof}

Another important observation is that the weight is submultiplicative
in a certain sense. In particular, we have the following lemma.

\begin{lem} \label{submultiplicativity}
  If $S_1$ and $S_2$ are sets such that $f + m \not \in S_1 + S_2$,
  and furthermore, for any $s_1 \in S_1$ and $s_2 \in S_2$, we have
  $s_1 \not \equiv s_2 \bmod m$, then $w(S_1 \cup S_2) \le w(S_1)
  w(S_2)$.
\end{lem}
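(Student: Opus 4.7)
The plan is to exhibit an injection $U \mapsto (U_1, U_2) := (U \cap S_1,\, U \cap S_2)$ from $\mathcal{A}(S_1 \cup S_2)$ into $\mathcal{A}(S_1) \times \mathcal{A}(S_2)$ and to establish the exponent inequality $s(U, S_1 \cup S_2) \ge s(U_1, S_1) + s(U_2, S_2)$. Since $\varphi^{-s}$ is decreasing in $s$, this yields $\varphi^{-s(U,\,S_1\cup S_2)} \le \varphi^{-s(U_1,\, S_1)}\, \varphi^{-s(U_2,\, S_2)}$; summing over $U$ and dominating by the full product $\sum_{U_1 \in \mathcal{A}(S_1)} \sum_{U_2 \in \mathcal{A}(S_2)}$ then gives $w(S_1 \cup S_2) \le w(S_1)\, w(S_2)$.

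First I would verify that $U_i := U \cap S_i$ lies in $\mathcal{A}(S_i)$. The ``no two elements sum to $f+m$'' condition descends to $U_i$ because $U_i \subseteq U$, and the closure property ``$x \in U_i$ and $x + m \in S_i$ implies $x+m \in U_i$'' follows by applying admissibility of $U$ (using $S_i \subseteq S_1 \cup S_2$) and then intersecting with $S_i$. The residue hypothesis forces $S_1 \cap S_2 = \emptyset$, so $U = U_1 \sqcup U_2$ and the map is injective.

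The main technical step, and the expected obstacle, is the exponent inequality, and it is here that the residue hypothesis does the real work: because $x$ and $x+m$ share a residue class modulo $m$, at most one of $S_1, S_2$ can contain either of them. From this I would deduce the exact splitting $E'(U, S_1 \cup S_2) = E'(U_1, S_1) \sqcup E'(U_2, S_2)$ and the containment $E(U_1, S_1) \sqcup E(U_2, S_2) \subseteq E(U, S_1 \cup S_2)$. The containment argument checks that for $x \in E(U_1, S_1)$, the requirements $x-d \in U$, $x \notin U$, $x+m \notin U$ are all inherited: both $x$ and $x+m$ are prevented from migrating into $U_2$ by the residue condition, while $x - d \in U_1 \subseteq U$ is immediate. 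The containment may be strict---elements of $S_1$ whose $x-d$ happens to lie in $U_2$ can enter the left side---but this only strengthens $|E(U)| \ge |E(U_1)| + |E(U_2)|$; combined with the equality for $E'$, subtracting yields the desired bound on $s$. I note in passing that the hypothesis $f+m \notin S_1 + S_2$ does not appear to be needed for this direction of the inequality; it would, however, be required to establish the reverse inclusion $\mathcal{A}(S_1) \times \mathcal{A}(S_2) \hookrightarrow \mathcal{A}(S_1 \cup S_2)$, so its presence in the statement is natural if the lemma is intended to record the full bijection.
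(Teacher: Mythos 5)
Your argument is correct and is essentially the paper's proof: the same decomposition $U \mapsto (U\cap S_1, U\cap S_2)$, the same containments $E(U_i,S_i) \subseteq E(U,S)$ and equality for $E'$, and the same conclusion $s(U,S)\ge s(U_1,S_1)+s(U_2,S_2)$. Your observation that the hypothesis $f+m\notin S_1+S_2$ is not needed for the injection (only for the reverse direction making it a bijection, which the paper does invoke but which is not strictly needed for the inequality) is accurate, though inessential to the result.
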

\begin{proof}
  Let $S = S_1 \cup S_2$, and note that $S_1$ and $S_2$ are
  disjoint. It is easy to check that $U \subset S$ is an admissible
  subset of $S$ if and only if $U \cap S_1$ and $U \cap S_2$ are
  admissible subsets of $S_1$ and $S_2$, respectively. Thus, there is
  a bijection between admissible subsets $U$ of $S$ and pairs of
  admissible subsets $(U_1, U_2)$ of $S_1$ and $S_2$; it is given by
  $U \mapsto (U \cap S_1, U \cap S_2)$.

  Next, for each admissible subset $U$ of $S$, we claim that

  \[ s(U, S) \ge s(U_1, S_1) + s(U_2, S_2), \]

  \noindent where $U_i = U \cap S_i$. Note that if $x \in E(U_1,
  S_1)$, then $x \in S_1$, but $x, x + m \not \in U_1$ and $x - d \in
  U_1 \subset U$. Since $S_1$ and $S_2$ lie in distinct residue
  classes modulo $m$, we know that $x, x + m \not \in S_2$. It follows
  that $x \in E(U, S)$.

  Hence, $E(U_1, S_1) \subset E(U, S)$, and analogously, $E(U_2, S_2)
  \subset E(U, S)$. Upon observing that $E(U_1, S_1)$ and $E(U_2,
  S_2)$ are disjoint because $S_1$ and $S_2$ are disjoint, this shows
  that $|E(U, S)| \ge |E(U_1, S_1)| + |E(U_2, S_2)|$.

  In addition, again using the fact that $S_1$ and $S_2$ lie in
  distinct residue classes modulo $m$, it is easy to see that $E'(U,
  S) = E'(U_1, S_1) \cup E'(U_2, S_2)$, and $E'(U_1, S_1)$ is disjoint
  from $E'(U_2, S_2)$. Hence, $|E'(U, S)| = |E'(U_1, S_1)| + |E'(U_2,
  S_2)|$.

  Subtracting this identity from the previous inequality yields $s(U,
  S) \ge s(U_1, S_1) + s(U_2, S_2)$. Then, using the bijection between
  admissible subsets of $S$ and pairs of admissible subsets of $S_1$
  and $S_2$, we find that

  \[ w(S) = \sum_{U \in \mathcal{A}(S)} \varphi^{-s(U, S)} =
  \sum_{\substack{U_1 \in \mathcal{A}(S_1) \\ U_2 \in
      \mathcal{A}(S_2)}} \varphi^{-s(U_1 \cup U_2, S)} \le
  \sum_{\substack{U_1 \in \mathcal{A}(S_1) \\ U_2 \in
      \mathcal{A}(S_2)}} \varphi^{-s(U_1, S_1) - s(U_2, S_2)} \]
  \[ = \sum_{U_1 \in \mathcal{A}(S_1)} \varphi^{-s(U_1, S_1)}
  \sum_{U_2 \in \mathcal{A}(S_2)} \varphi^{-s(U_2, S_2)} = w(S_1)
  w(S_2), \]

  \noindent as desired.
\end{proof}

Lemma \ref{submultiplicativity} allows us to bound the weight of a set
by partitioning it and bounding the different parts separately. To
this end, let $r$ be an integer between $0$ and $m - 1$, and define
$S(r)$ to be the set of all integers in the range $[m, f]$ that are
congruent to $r$ or $f - r$ modulo $m$. Let $I(r)$ denote the number
of integers in the interval $[m, f]$ congruent to $r$ modulo $m$. 

In more explicit terms, $S(r)$ is the set $\{ r + m, r + 2m, \ldots ,
r + I(r)m \} \cup \{ f - r, f - r - m, \ldots, f - r - (I(r) - 1)m
\}$. Assuming that $r \not \equiv f - r \bmod m$, any admissible
subset $U$ of $S(r)$ takes the form $\{ r + (i + 1)m, r + (i + 2)m,
\ldots , r + I(r)m \} \cup \{ f - r, f - r - m, \ldots , f - r - (j -
1)m \}$, where $i$ and $j$ are between $0$ and $I(r)$. If $i = I(r)$,
then there are no elements in $U$ congruent to $r$ modulo $m$, and
similarly, if $j = 0$, there are no elements in $U$ congruent to $f -
r$ modulo $m$. In addition, we require that no two elements of an
admissible subset sum to $f + m$, so either $i = I(r)$, $j = 0$, or

\[ (r + (i + 1)m) + (f - r - (j - 1)m) > f + m \]
\[ i \ge j. \]

\noindent We say that $U$ has \emph{signature} $(i, j, I(r))$. The
signature of $U$ completely determines the size of $E'(U, S(r))$, as
the next lemma shows.

\begin{lem} \label{signature to E'}
  Suppose that an admissible subset $U$ of $S(r)$ has signature $(i,
  j, I(r))$. Then,

  \[ |E'(U, S(r))| = \begin{cases}
    j - 1 & \text{if $i = I(r)$ and $1 \le j \le I(r)$}. \\
    I(r) - i - 1 & \text{if $0 \le i \le I(r) - 1$ and $j = 0$}. \\
    I(r) - i + j - 2 & \text{if $I(r) > i \ge j > 0$}. \\
    0 & \text{if $i = I(r)$ and $j = 0$}. \\
    \end{cases} \]
\end{lem}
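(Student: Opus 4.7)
The plan is a direct case analysis built on the structural observation that any admissible subset with signature $(i, j, I(r))$ decomposes into two arithmetic progressions of common difference $m$. Specifically, the elements of $U$ congruent to $r$ modulo $m$ form an ``upper r-run'' $\{r + (i+1)m, \ldots, r + I(r)m\}$ of length $I(r) - i$, while the elements congruent to $f - r$ form a ``lower $(f-r)$-run'' $\{f - r, f - r - m, \ldots, f - r - (j-1)m\}$ of length $j$. Recall that for the signature to be well-defined, $r \not\equiv f - r \pmod{m}$, and the admissibility constraint $i \ge j$ (when both parts are nonempty) was used to rule out the forbidden sum $f + m$; neither will play a role in counting $E'(U, S(r))$.

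The first step is to observe that, because $r \not\equiv f - r \pmod{m}$, the shift $x \mapsto x + m$ sends elements of the r-run only to elements congruent to $r$ modulo $m$, and similarly for the $(f-r)$-run. Consequently, membership of $x + m$ in $U$ for $x$ in one run depends solely on the other elements of that same run. This reduces the computation of $|E'(U, S(r))|$ to summing two independent contributions, one from each run.

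Next, for any arithmetic progression of common difference $m$ and length $\ell$, exactly $\max(\ell - 1, 0)$ of its elements $x$ satisfy $x + m$ still in the progression, namely all elements except the maximum one. Applying this to the two runs gives a contribution of $\max(I(r) - i - 1, 0)$ from the r-run and $\max(j - 1, 0)$ from the $(f-r)$-run, so in general
\[ |E'(U, S(r))| = \max(I(r) - i - 1, 0) + \max(j - 1, 0). \]

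The final step is purely bookkeeping: each of the four cases in the lemma statement corresponds to a combination of which runs are empty. When $i = I(r)$ and $j \ge 1$ only the $(f-r)$-run survives, giving $j - 1$; when $j = 0$ and $i \le I(r) - 1$ only the r-run survives, giving $I(r) - i - 1$; when both runs are nonempty the two contributions add to $I(r) - i + j - 2$; and when $i = I(r)$ and $j = 0$ the set $U$ is empty and so is $E'(U, S(r))$. There is no real obstacle here — the only thing to be careful about is verifying that the modular disjointness of the two runs genuinely prevents cross-run contributions to $E'(U, S(r))$, which is immediate from the hypothesis $r \not\equiv f - r \pmod{m}$.
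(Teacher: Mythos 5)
Your proof is correct and follows essentially the same approach as the paper's: both decompose $E'(U, S(r))$ by residue class modulo $m$, noting that the shift $x \mapsto x + m$ cannot cross between the two arithmetic progressions, and then count the non-maximal elements of each run. The paper writes the two contributions as explicit sets $A$ and $B$ whereas you express them as $\max(I(r) - i - 1, 0)$ and $\max(j - 1, 0)$, but these are the same computation.
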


\begin{proof}
  First, recall that by the preceding discussion, the four cases given
  in the lemma indeed cover all possible signatures of $U$.

  If $U$ has signature $(i, j, I(r))$, then let $A = \{ r + (i + 1)m,
  r + (i + 2)m, \ldots , r + (I(r) - 1)m \}$, and let $B = \{ f - r -
  m, f - r - 2m, \ldots , f - r - (j - 1)m \}$. If $i \ge I(r) - 1$,
  we take $A$ to be empty, and if $j \le 1$, we take $B$ to be
  empty. Then, $E'(U, S(r)) = A \cup B$, and $|E'(U, S(r))| = |A| +
  |B|$.

  Note that $|A| = I(r) - i - 1$ unless $i = I(r)$, in which case $|A|
  = 0$. Similarly, $|B| = j - 1$ unless $j = 0$, in which case $|B| =
  0$. The formula for $|E'(U, S(r))|$ stated in the lemma follows from
  applying the formulas for $|A|$ and $|B|$ to the four cases.
\end{proof}

Let $\ell$ be the integer between $0$ and $m - 1$ congruent to $f - r$
modulo $m$. Define $N(r)$ to be the least non-negative integer such
that $r + nd \ge \ell - nd$. Finally, define $T(r) = \bigcup_{i =
  0}^{N(r) - 1} S(r + id)$ (take $T(r)$ to be empty if $N(r) = 0$).

\subsection{Some bounds on $w(S(r))$ and $w(T(r))$}

Let us first bound $S(r)$ where $r \not \equiv f - r \bmod m$. In the
cases $I(r) = 1$ and $I(r) = 2$, we have the following lemma.

\begin{lem} \label{I(r) = 1 or 2 bounds on S(r)} 
  Let $r$ be an integer satisfying $0 \le r \le m - 1$ and $r \not
  \equiv f - r \bmod m$. Then, the following bounds hold:

  \begin{enumerate}
  \item If $I(r) = 1$, then $|S(r)| = 2$, and
    \begin{enumerate}
    \item $w(S(r)) \le 3$.
    \item $w(S(r) \setminus \{ r + m \}) \le 2 < 0.7726 \cdot
      1.618^{|S(r)|}$.
    \item $w(S(r) \setminus \{ r + m, f - r \}) = 1 < 0.3820 \cdot
      1.618^{|S(r)|}$.
    \end{enumerate}
  \item If $I(r) = 2$, then $|S(r)| = 4$, and
    \begin{enumerate}
    \item $w(S(r)) \le 4 + 2 \varphi < 1.0559 \cdot 1.618^{|S(r)|}$.
    \item $w(S(r) \setminus \{ r + m \}) \le 4 + \varphi < 0.8198 \cdot
      1.618^{|S(r)|}$.
    \item $w(S(r) \setminus \{ r + m, f - m - r \}) \le 4 < 0.5837
      \cdot 1.618^{|S(r)|}$.
    \end{enumerate}
  \end{enumerate}
\end{lem}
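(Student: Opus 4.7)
The approach is a direct finite enumeration, exploiting the fact that $|E(U, S)| \ge 0$ for every admissible $U$. This gives $s(U) \ge -|E'(U)|$ and hence $\varphi^{-s(U)} \le \varphi^{|E'(U)|}$, so each of the six stated bounds reduces to listing admissible subsets and summing $\varphi^{|E'(U)|}$. The comparisons with $1.618^{|S(r)|}$ at the end are then arithmetic checks using $1.618^2 = 2.617924$ and $1.618^4 \approx 6.85410$.

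For $I(r) = 1$, one has $S(r) = \{r+m, f-r\}$, and the only forbidden subset (whose elements sum to $f+m$) is the full pair, so the admissible subsets are $\emptyset$, $\{r+m\}$, $\{f-r\}$; all three have $|E'(U)| = 0$ and contribute $1$ apiece, yielding $w(S(r)) \le 3$. Removing $r+m$ leaves two admissible subsets, and removing $f-r$ as well leaves only $\emptyset$, giving the remaining bounds $2$ and $1$.

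For $I(r) = 2$ part (a), the plan is to apply Lemma~\ref{signature to E'} directly. The six signatures $(i, j, 2)$ allowed by the constraint ``$i = 2$, or $j = 0$, or $i \ge j$'' are $(2,0,2)$, $(1,0,2)$, $(0,0,2)$, $(2,1,2)$, $(2,2,2)$, $(1,1,2)$, and Lemma~\ref{signature to E'} gives corresponding $|E'(U)|$-values $0, 0, 1, 0, 1, 0$, for a total weight at most $4 + 2\varphi$. For parts (b) and (c), since elements have been removed from $S(r)$, the signature framework no longer directly applies, so I would re-enumerate the admissible subsets of $S(r) \setminus \{r+m\}$ and $S(r) \setminus \{r+m, f-r-m\}$ by hand. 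In part (b), one must note that $(r+2m) + (f-r-m) = f+m$ is still a forbidden sum, and that the condition ``$x \in U$ and $x + m$ in the ambient set imply $x + m \in U$'' rules out $\{f-r-m\}$ as a singleton subset; this leaves five admissible subsets, of which exactly one, $\{f-r-m, f-r\}$, has $|E'(U)| = 1$, giving total $4 + \varphi$. In part (c), all four subsets of $\{r+2m, f-r\}$ are admissible with $|E'(U)| = 0$, giving total $4$.

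There is no substantive obstacle; the proof is purely bookkeeping. The only mild care required is in parts (b) and (c) of $I(r) = 2$, where admissibility must be rechecked manually after truncation rather than read off from the signature framework of Lemma~\ref{signature to E'}.
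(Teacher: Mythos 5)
Your proof is correct and takes essentially the same route as the paper: both use the trivial bound $|E(U)| \ge 0$ to reduce $w$ to $\sum_U \varphi^{|E'(U)|}$, then enumerate the admissible subsets and read off $|E'(U)|$. One small observation that simplifies your handling of (ii.b) and (ii.c): as noted in the paper's footnote, $E'(U,S)$ is in fact independent of $S$, and removing $r+m$ (or $f-r-m$) from $S(r)$ does not relax the chain condition for any remaining element (since $r \notin S(r)$); hence the admissible subsets of the truncated sets are exactly those admissible subsets of $S(r)$ avoiding the deleted elements, so Lemma~\ref{signature to E'} still applies verbatim rather than requiring a fresh by-hand recheck. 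Your re-enumeration reaches the same list, so nothing is wrong, merely slightly over-cautious.
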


\begin{proof} We explicitly determine the possible signatures of
  admissible subsets $U \subset S(r)$. We then bound $w(S(r))$ using
  the inequality

  \[ w(S(r)) = \sum_{U \in \mathcal{A}(S(r))} \varphi^{-s(U)} \le
  \sum_{U \in \mathcal{A}(S(r))} \varphi^{|E'(U, S(r))|}, \]

  \noindent where we can compute $|E'(U, S(r))|$ from the signature of
  $U$ using Lemma \ref{signature to E'}.

  If $I(r) = 1$, then note that $S(r) = \{ r + m, f - r \}$. The
  possible signatures of $U$ are $(0, 0, 1)$, $(1, 0, 1)$, and $(1, 1,
  1)$. In each case, $|E'(U, S(r))| = 0$. Hence, $w(S(r)) \le 3$. It
  is also routine to check that $w(S(r) \setminus \{ r + m \}) \le 2$,
  and $w(S(r) \setminus \{ r + m, f - r\}) = 1$. This proves part (i).

  If $I(r) = 2$, then $S(r) = \{ r + m, r + 2m, f - r, f - r - m
  \}$. The possible signatures of $U$ are $(2, 0, 2)$, $(2, 1, 2)$,
  $(1, 0, 2)$, $(1, 1, 2)$, $(0, 0, 2)$, and $(2, 2, 2)$. In the first
  four cases, $|E'(U, S(r))| = 0$, while in the other two, $|E'(U,
  S(r))| = 1$. Thus, $w(S(r)) \le 4 + 2 \varphi$.

  The admissible subsets of $S(r) \setminus \{ r + m \}$ are the same
  as those of $S(r)$ with the exception of the set having signature
  $(0, 0, 2)$. The admissible subsets of $S(r) \setminus \{ r + m, f -
  r - m \}$ are the same as those of $S(r)$ except the sets having
  signature $(0, 0, 2)$ and $(2, 2, 2)$. Therefore, $w(S(r) \setminus
  \{ r + m \}) \le 4 + \varphi$ and $w(S(r) \setminus \{ r + m, f - r
  - m \}) \le 4$. This proves part (ii), completing the proof.
\end{proof}

Using much the same approach, we can also give estimates of $w(S(r))$
when $I(r) \ge 3$.

\begin{lem} \label{I(r) >= 3 bound on S(r)} Let $r$ be an integer
  satisfying $0 \le r \le m - 1$ and $r \not \equiv f - r \bmod
  m$. Also, suppose that $I(r) \ge 3$. Then $|S(r)| = 2I(r)$, and

  \[ w(S(r)) \le 0.8755 \cdot 1.618^{|S(r)|}. \]
\end{lem}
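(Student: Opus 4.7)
The plan is to mimic the signature enumeration used for Lemma \ref{I(r) = 1 or 2 bounds on S(r)}, but carry it out for general $I = I(r) \ge 3$. As before, every admissible $U \subset S(r)$ has some signature $(i,j,I)$ satisfying $0 \le i,j \le I$ together with $i = I$, $j = 0$, or $i \ge j$. Since $|E(U,S(r))| \ge 0$, the bound $\varphi^{-s(U,S(r))} \le \varphi^{|E'(U,S(r))|}$ holds, and Lemma \ref{signature to E'} gives $|E'(U,S(r))|$ explicitly in each of the four cases. Partitioning over these cases, and setting $a = I-i$ and $b = j$ in the fourth case so that the condition becomes $a,b \ge 1$, $a + b \le I$, I get
\[
w(S(r)) \le 1 + \sum_{j=1}^{I} \varphi^{j-1} + \sum_{i=0}^{I-1} \varphi^{I-i-1} + \sum_{t=2}^{I}(t-1)\varphi^{t-2},
\]
where in the last sum $t = a + b$ ranges over the possible values of $|E'|+2$.

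The next step is to evaluate the right-hand side in closed form. The two geometric sums each equal $(\varphi^I - 1)/(\varphi-1) = \varphi^{I+1} - \varphi$, using $\varphi - 1 = \varphi^{-1}$. The triangular sum is an arithmetico-geometric series which, after differentiating the geometric sum and invoking $\varphi^2 = \varphi + 1$ and $(\varphi-1)^2 = \varphi^{-2}$, simplifies to $I\varphi^I - \varphi^{I+2} + \varphi^2$. Combining everything and using the identity $2 - \varphi = \varphi^{-2}$ to collapse $2\varphi^{I+1} - \varphi^{I+2} = \varphi^{I-1}$, together with $1 - 2\varphi + \varphi^2 = \varphi^{-2}$, the whole expression telescopes to the clean bound
\[
w(S(r)) \le I\,\varphi^I + \varphi^{I-1} + \varphi^{-2}.
\]

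The final step is to check that $I\varphi^I + \varphi^{I-1} + \varphi^{-2} \le 0.8755 \cdot 1.618^{2I}$ for every $I \ge 3$. I would handle $I = 3$ directly: expanding with $\varphi^3 = 2\varphi + 1$ and $\varphi^2 = \varphi + 1$ shows the left-hand side equals $6\varphi^2 \approx 15.708$, while $1.618^{6} \approx 17.942$, whose ratio is approximately $0.8755$; this pins down the constant in the bound. For $I \ge 4$ the ratio of left to right is strictly decreasing, since the dominant term $I\varphi^I$ divided by $1.618^{2I}$ has successive ratio $\frac{I+1}{I} \cdot \varphi / 1.618^{2} \approx \frac{I+1}{I} \cdot 0.618 < 1$ for $I \ge 3$, and the lower-order terms decay even faster. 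The main obstacle is really just the bookkeeping in the triangular sum of Case D of Lemma \ref{signature to E'} and the string of golden-ratio identities needed to collapse the expression to $I\varphi^I + \varphi^{I-1} + \varphi^{-2}$; once that is in hand, the base case at $I = 3$ fixes the constant $0.8755$ and a one-line ratio test handles all larger $I$.
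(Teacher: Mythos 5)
Your proof is correct, and the setup is identical to the paper's: you enumerate the four classes of signatures from Lemma~\ref{signature to E'}, bound $\varphi^{-s(U)} \le \varphi^{|E'(U)|}$, and arrive at the same expression $1 + 2\sum_{k=0}^{I-1}\varphi^k + \sum_{k=0}^{I-2}(k+1)\varphi^k$ (your reindexing with $a = I-i$, $b = j$, $t = a+b$ is just a cleaner way of summing the triangular case). Where you diverge is the finish. The paper leaves the bound in the form $1 + 2\varphi^{N-1} + \sum_{k=0}^{N-2}(k+3)\varphi^k$, checks $N = 3$ and $N = 4$ numerically, and then runs a somewhat intricate inequality chain to show $(\varphi + \varphi^{-1})W_N > W_{N+1}$ for $N \ge 4$, concluding via $\varphi + \varphi^{-1} = \sqrt{5} < 1.618^2$. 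You instead collapse the whole expression to the exact closed form $I\varphi^I + \varphi^{I-1} + \varphi^{-2}$ using the identities $\varphi - 1 = \varphi^{-1}$, $\varphi^2 = \varphi + 1$, and $2 - \varphi = \varphi^{-2}$; I checked the algebra and it is right (e.g.\ at $I = 3$ it gives $6\varphi^2 \approx 15.708$, matching the paper's $W_3$). With the closed form in hand, your term-by-term ratio test is immediate: each summand, divided by $1.618^{2I}$, has successive ratio $\frac{I+1}{I}\cdot\frac{\varphi}{1.618^2} < 1$, $\frac{\varphi}{1.618^2} < 1$, and $\frac{1}{1.618^2} < 1$ respectively, so the normalized quantity is decreasing already from $I = 3$, and only the single base case $I = 3$ is needed. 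This is a genuine simplification over the paper's argument: you avoid the second base case and the awkward induction, at the cost of a slightly longer algebraic simplification (which is purely mechanical). The one thing to tighten before this could replace the paper's proof: rather than asserting the $I = 3$ ratio is ``approximately $0.8755$,'' you should state the explicit numerical inequality $6\varphi^2 < 0.8755 \cdot 1.618^6$ (which holds, since $6\varphi^2 = 15.7082\ldots$ and $0.8755 \cdot 1.618^6 = 15.7082\ldots$, with the right side barely larger), since the constant is nearly tight there.
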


\begin{proof}
  Let us partition $\mathcal{A}(S(r)) = A_1 \cup A_2 \cup A_3 \cup
  A_4$ according to the four cases of Lemma \ref{signature to
    E'}. More explicitly,

  \begin{enumerate}
  \item $A_1$ consists of those subsets $U$ having signature $(I(r),
    j, I(r))$ where $1 \le j \le I(r)$.
  \item $A_2$ consists of those subsets $U$ having signature $(i, 0,
    I(r))$ where $0 \le i \le I(r) - 1$.
  \item $A_3$ consists of those subsets $U$ having signature $(i, j,
    I(r))$ where $I(r) > i \ge j > 0$.
  \item $A_4$ consists of the single subset $U$ having signature
    $(I(r), 0, I(r))$ (namely, the empty set).
  \end{enumerate}

  \noindent By Lemma \ref{signature to E'}, we have

  \[ \sum_{U \in A_1} \varphi^{|E'(U, S(r))|} = \sum_{j = 1}^{I(r)}
  \varphi^{j - 1} = \sum_{k = 0}^{I(r) - 1} \varphi^k. \]

  \[ \sum_{U \in A_2} \varphi^{|E'(U, S(r))|} = \sum_{i = 0}^{I(r) -
    1} \varphi^{I(r) - i - 1} = \sum_{k = 0}^{I(r) - 1} \varphi^k. \]
  
  \[ \sum_{U \in A_3} \varphi^{|E'(U, S(r))|} = \sum_{I(r) > i \ge j >
    0} \varphi^{I(r) - i + j - 2} \]
  \[ = \sum_{k = 0}^{I(r) - 2} (I(r) - k - 1) \varphi^{I(r) - k - 2} =
  \sum_{k = 0}^{I(r) - 2} (k + 1) \varphi^k. \]
  
  \[ \sum_{U \in A_4} \varphi^{|E'(U, S(r))|} = 1, \]

  \noindent It follows that

  \[ w(S(r)) = \sum_{U \in \mathcal{A}(S(r))} \varphi^{-s(U)} \le
  \sum_{U \in \mathcal{A}(S(r))} \varphi^{|E'(U, S(r))|} \]
  \[ = 1 + 2 \varphi^{I(r) - 1} + \sum_{k = 0}^{I(r) - 2} (k + 3)
  \varphi^k. \]
  
  \noindent Let $W_N$ denote this final expression when $I(r) = N$. It
  remains to show that $W_N \le 1.618^{2N}$ when $N \ge 3$. We can
  verify this in the cases $N = 3$ and $N = 4$ by explicit
  computation:

  \[ W_3 = 1 + 2 \varphi^2 + (3 + 4 \varphi) = 4 + 4 \varphi + 2
  \varphi^2 < 0.8755 \cdot 1.618^6 \]
  \[ W_4 = 1 + 2 \varphi^3 + (3 + 4 \varphi + 5 \varphi^2) = 4 + 4
  \varphi + 5 \varphi^2 + 2 \varphi^3 < 0.8755 \cdot 1.618^8. \]

  \noindent For all $N \ge 4$, we also have

  \[ \left( \varphi + \frac{1}{\varphi} \right) W_N > 1 + 2 \varphi^N
  + 2 \varphi^{N - 2} + \sum_{k = 0}^{N - 2} (k + 3) \varphi^{k + 1} +
  \sum_{k = 0}^{N - 3} (k + 4) \varphi^k \]
  \[ \ge 1 + 2 \varphi^N + \sum_{k = 0}^{N - 1} (k + 2) \varphi^k +
  \sum_{k = 0}^{N - 3} (k + 4) \varphi^k \]
  \[ = 1 + 2 \varphi^N + \sum_{k = 0}^{N - 1} (k + 3) \varphi^k +
  \left( \sum_{k = 0}^{N - 3} (k + 3) \varphi^k - \varphi^{N - 2} -
    \varphi^{N - 1} \right) \]
  \[ \ge 1 + 2 \varphi^N + \sum_{k = 0}^{N - 1} (k + 3) \varphi^k +
  \left( (N - 1) \varphi^{N - 4} + N \varphi^{N - 3} - \varphi^{N - 2}
    - \varphi^{N - 1} \right) \]
  \[ = 1 + 2 \varphi^N + \sum_{k = 0}^{N - 1} (k + 3) \varphi^k +
  \left( (N - 1) + N \varphi - \varphi^2 - \varphi^3 \right)
  \varphi^{N - 4} \]
  \[ > 1 + 2 \varphi^N + \sum_{k = 0}^{N - 1} (k + 3) \varphi^k = W_{N
    + 1}. \]

  \noindent Since $\varphi + \frac{1}{\varphi} < 1.618^2$, the lemma
  follows by induction.
\end{proof}

\begin{cor} \label{I(r) >= 3 bound on T(r)} Let $r$ be an integer
  satisfying $0 \le r \le m - 1$ and $r \not \equiv f - r \bmod
  m$. Also, suppose that $I(r) \ge 3$. Then,

  \[ w(T(r)) \le 1.618^{|T(r)|}. \]
\end{cor}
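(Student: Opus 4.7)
The plan is to apply Lemma \ref{submultiplicativity} iteratively to the decomposition $T(r) = \bigcup_{i=0}^{N(r)-1} S(r+id)$ and then bound each factor $w(S(r+id))$ using Lemma \ref{I(r) >= 3 bound on S(r)}. The case $N(r) = 0$ is trivial since $T(r) = \emptyset$ gives $w(T(r)) = 1 = 1.618^0$, so I would assume $N(r) \ge 1$.

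To apply Lemma \ref{submultiplicativity} pairwise to the pieces $S(r+id)$ and $S(r+jd)$ with $i \ne j$, I would use the minimality of $N(r)$: the condition $r + (N(r)-1)d < \ell - (N(r)-1)d$ yields $2(N(r)-1)d < \ell - r \le m - 1$, and hence for distinct $i, j \in \{0, \ldots, N(r)-1\}$ we get $0 < |i-j|d < m/2$ and $0 \le (i+j)d < \ell - r < m$. Combined with $f \equiv r + \ell \pmod m$, these imply (i) the two-element residue sets $\{r+id,\ \ell - id\}$ and $\{r+jd,\ \ell - jd\}$ modulo $m$ are disjoint, and (ii) no element of $S(r+id) + S(r+jd)$ is congruent to $f \pmod m$, so in particular none equals $f+m$. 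Iterating Lemma \ref{submultiplicativity} then gives $w(T(r)) \le \prod_{i=0}^{N(r)-1} w(S(r+id))$.

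Next I would verify that Lemma \ref{I(r) >= 3 bound on S(r)} applies to each $S(r+id)$. Writing $f - r = I(r)\, m + \ell$ with $\ell \in [0, m-1]$, the hypothesis $I(r) \ge 3$ gives $\ell \le f - 3m$. Since $i < N(r)$ forces $r + id < \ell$, this yields $f - (r+id) > 3m$, hence $I(r+id) \ge 3$. The nondegeneracy condition $r + id \not\equiv f - (r+id) \pmod m$ becomes $2id \not\equiv \ell - r \pmod m$, which holds because $0 \le 2id < \ell - r < m$. The lemma therefore gives $w(S(r+id)) \le 0.8755 \cdot 1.618^{|S(r+id)|}$, and multiplying over $i$ (with $|T(r)| = \sum_i |S(r+id)|$ by disjointness) yields
\[
w(T(r)) \le 0.8755^{N(r)} \cdot 1.618^{|T(r)|} \le 1.618^{|T(r)|}.
\]

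The main care lies in the residue and sum bookkeeping for the submultiplicativity step, since it requires repackaging the single inequality $2(N(r)-1)d < \ell - r$ into four arithmetic distinctness conditions modulo $m$, and in the key observation that $I(r+id) \ge 3$ for every $i < N(r)$ via the chain $r + id < \ell \le f - 3m$. Once these are in hand, the final product bound is immediate from $0.8755 < 1$.
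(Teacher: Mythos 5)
Your proof is correct and follows exactly the same strategy as the paper's: decompose $T(r) = \bigcup_{i=0}^{N(r)-1}S(r+id)$, apply Lemma \ref{submultiplicativity} iteratively, and bound each factor with Lemma \ref{I(r) >= 3 bound on S(r)}. The paper's proof simply invokes those two lemmas without checking their hypotheses; you supply the (correct) verification that the residue classes $\{r+id, \ell-id\}$ are pairwise distinct and avoid $f \bmod m$, and that $I(r+id)\ge 3$ with $r+id \not\equiv f-(r+id) \pmod m$, all from the bound $2(N(r)-1)d < \ell - r < m$.
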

\begin{proof}
  Recall that $T(r) = \bigcup_{i = 0}^{N(r) - 1} S(r + id)$. Thus, by
  Lemma \ref{submultiplicativity} and Lemma \ref{I(r) >= 3 bound on
    S(r)}, we find that

  \[ w(T(r)) \le \prod_{i = 0}^{N(r) - 1} w(S(r + id)) \le \prod_{i =
    0}^{N(r) - 1} 1.618^{|S(r + id)|} = 1.618^{|T(r)|}, \]

  \noindent as desired.
\end{proof}

Finally, we can give a bound in the case where $r \equiv f - r \bmod
m$.

\begin{lem} \label{2r = f mod m bound on S(r)} Let $r$ be an integer
  satisfying $0 \le r \le m - 1$ and $r \equiv f - r \bmod m$ (if such
  an integer exists). Then, $|S(r)| = I(r)$, and

  \[ w(S(r)) \le 1.618^{|S(r)|}. \]
\end{lem}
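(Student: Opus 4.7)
The plan is to explicitly enumerate the admissible subsets of $S(r)$, exploiting the congruence $r \equiv f - r \pmod{m}$ to reduce $S(r)$ to a single arithmetic progression, and then bound the weight by the now-standard inequality $\varphi^{-s(U)} \le \varphi^{|E'(U, S(r))|}$. First I would observe that under the given congruence the two residue classes defining $S(r)$ coincide, so $S(r) = \{r + m, r + 2m, \ldots, r + I(r)m\}$, and in particular $|S(r)| = I(r)$. Writing $I = I(r)$ for brevity, the element $f - r$ lies in this progression and $r + Im$ is its largest element, so $f - r = r + Im$. This yields the key identity $f = 2r + Im$, and hence the forbidden sum equals $f + m = 2r + (I + 1)m$.

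Next, the upward closure axiom forces every admissible subset of $S(r)$ to be a ``tail'' $U_i = \{r + (i+1)m, \ldots, r + Im\}$ with $0 \le i \le I$. A pair of elements $r + j_1 m$ and $r + j_2 m$ of $U_i$ (possibly with $j_1 = j_2$) sums to $f + m$ exactly when $j_1 + j_2 = I + 1$, and a short case analysis (including the degenerate pair $j_1 = j_2 = (I+1)/2$ when $I$ is odd) shows that $U_i$ avoids all such pairs if and only if $i \ge \lceil I/2 \rceil$. A direct count then gives $|E'(U_i, S(r))| = \max(0, I - i - 1)$, since every element of the tail except the topmost has its $+m$ translate still in the tail.

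Combining these observations, I obtain
\[ w(S(r)) \le \sum_{i = \lceil I/2 \rceil}^{I} \varphi^{\max(0,\, I - i - 1)} = 1 + \sum_{k=1}^{\lfloor I/2 \rfloor} \varphi^{k-1} = 1 - \varphi + \varphi^{\lfloor I/2 \rfloor + 1}, \]
where I have used the identity $\varphi(\varphi - 1) = 1$ to evaluate the geometric sum. The final step is to check that this quantity is at most $1.618^I$. The small cases $I \in \{0, 1, 2\}$ can be handled by hand, with the pleasant identity $1 - \varphi + \varphi^2 = 2$ comfortably below $1.618^2 \approx 2.618$; for $I \ge 3$, the exponent $\lfloor I/2 \rfloor + 1$ is at most roughly $I/2$, so $\varphi^{\lfloor I/2 \rfloor + 1}$ is already far smaller than $1.618^I$, and the bound is immediate. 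I expect the main obstacle to be less any single inequality than the careful bookkeeping around the admissibility condition --- particularly ensuring the degenerate same-element pair when $I$ is odd is correctly accounted for --- but once the enumeration is set up, the rest is a routine geometric-sum computation.
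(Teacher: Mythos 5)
Your proof is correct and follows essentially the same route as the paper's: reduce $S(r)$ to a single arithmetic progression, observe that admissibility forces a tail $\{r+(i+1)m,\ldots,r+I(r)m\}$ with starting index constrained by the condition that no two (not necessarily distinct) elements sum to $f+m$, compute $|E'|$ for each tail, and bound $w(S(r))$ by the resulting geometric sum $1-\varphi+\varphi^{\lfloor I/2\rfloor+1}$, which the paper leaves as a routine check against $1.618^{I(r)}$. Your explicit treatment of the degenerate pair $j_1=j_2=(I+1)/2$ and the small-$I$ cases just fills in what the paper elides, with no substantive difference in method.
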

\begin{proof}
  Let $\ell$ be the remainder when $f$ is divided by $m$. Then, $r =
  \frac{\ell}{2}$ or $r = \frac{m + \ell}{2}$. In either case, $S(r) =
  \{ m + r, 2m + r, \ldots , I(r)m + r \}$, and so $|S(r)| = I(r)$.

  The non-empty admissible subsets $U$ of $S(r)$ take the form $\{ im
  + r, (i + 1)m + r, \ldots , I(r)m + r \}$, where $i \le I(r)$. Since
  no two elements of $U$ sum to $f + m$, we must also have

  \[ 2(im + r) > f + m = (I(r) + 1)m + 2r \]
  \[ i > \frac{I(r) + 1}{2}. \]

  Note that $E'(U, S(r)) = \{ im + r, (i + 1)m + r, \ldots , (I(r) -
  1)m + r \}$, so $s(U, S(r)) \ge -|E'(U, S(r))| = I(r) -
  i$. Accounting for the fact that $s(\emptyset, S(r)) = 0$, it
  follows that

  \[ w(S(r)) = \sum_{U \in \mathcal{A}(S(r))} \varphi^{-s(U, S(r))}
  \le 1 + \sum_{\frac{I(r) + 1}{2} < i \le I(r)} \varphi^{I(r) - i} \]
  \[ = 1 + \sum_{i = 0}^{\lceil \frac{I(r) - 1}{2} \rceil - 1}
  \varphi^i = 1 + \varphi^{\lceil \frac{I(r) - 1}{2} \rceil + 1} -
  \varphi. \]

  \noindent It is routine to check that this quantity is at most
  $1.618^{I(r)}$.
\end{proof}

\subsection{Bounds on $w(T(r))$ when $I(r) = 1$ or $I(r) = 2$}

We now describe another method for bounding the weight of $T(r)$ in
terms of $N(r)$ and $I(r)$. Let $U$ be an admissible subset of $T(r)$,
and let $U_i = U \cap S(r + id)$ for each $i < N(r)$. Note that $I(r +
id) = I(r)$ for each $i < N(r)$.

By Lemma \ref{signature to E'}, it is clear that the number $e'_i(U)$
of elements in $E'(U, T(r))$ that are congruent to $r + id$ or $f - r
- id$ modulo $m$ (in other words, are in $S(r + id)$) depends only on
the signature of $U_i$. A similar statement holds for the number
$e_i(U)$ of elements in $E(U, T(r))$ congruent to $r + (i + 1)d$ or $f
- r - id$ modulo $m$.

\begin{lem} \label{signature to E} The value of $e_i(U)$ depends only
  on the signatures of $U_i$ and $U_{i + 1}$. In particular, if their
  signatures are $(a_i, b_i, I(r))$ and $(a_{i + 1}, b_{i + 1},
  I(r))$, respectively, then

  \[ e_i(U) = \begin{cases}
    I(r) - a_i + \max(b_{i + 1} - b_i - 1, 0) & \text{if $b_i > 0$ and $a_{i + 1} = I(r)$} \\
    \max(a_{i + 1} - a_i - 1, 0) + b_{i + 1} & \text{if $b_i = 0$ and $a_{i + 1} < I(r)$} \\
    \max(a_{i + 1} - a_i - 1, 0) + \max(b_{i + 1} - b_i - 1, 0) & \text{if $b_i > 0$ and $a_{i + 1} < I(r)$} \\
    I(r) - a_i + b_{i + 1}  & \text{if $b_i = 0$ and $a_{i + 1} = I(r)$}. \\
    \end{cases}. \]
\end{lem}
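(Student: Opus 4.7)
The plan is to enumerate the elements contributing to $e_i(U)$ directly, parameterizing them by their position within the arithmetic progressions defining $S(r + id)$ and $S(r + (i+1)d)$, and then translating the defining conditions of $E(U, T(r))$ into explicit inequalities on the signature parameters. An element contributing to $e_i(U)$ has residue either $r + (i+1)d$ (call these \emph{type A}) or $f - r - id$ (\emph{type B}) modulo $m$, so the count splits cleanly into two independent subcounts that are summed at the end.

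For type A, I would parameterize $x = r + (i+1)d + jm$ with $1 \le j \le I(r)$. Then $x - d$ lies in the left-residue part of $S(r + id)$, and the three membership conditions ($x - d \in U$, $x \notin U$, $x + m \notin U$) translate into $j \ge a_i + 1$, $j \le a_{i+1}$, and either $j + 1 \le a_{i+1}$ or $j = I(r)$ respectively. Intersecting yields $\max(a_{i+1} - a_i - 1, 0)$ interior contributions, plus possibly one boundary contribution at $j = I(r)$ depending on whether $a_{i+1} = I(r)$. A symmetric computation for type B with $x = f - r - id - jm$ and $0 \le j \le I(r) - 1$ produces $\max(b_{i+1} - b_i - 1, 0)$ interior contributions plus possibly one boundary contribution at $j = 0$, depending on whether $b_i = 0$ (and $b_{i+1} \ge 1$).

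Splitting into the four cases of the lemma according to whether $a_{i+1} = I(r)$ and whether $b_i = 0$, and summing the type A and type B counts in each case, should then reproduce the four stated formulas once the boundary contributions are absorbed. The main obstacle is the careful boundary bookkeeping at $j = 0$ and $j = I(r)$, where a shift by $\pm m$ or $\pm d$ leaves the ambient set $S(r + id)$ or $S(r + (i+1)d)$ entirely and makes one of the three membership conditions automatic; these edge effects shift the count by one relative to the naive interior formula, and one must verify that the $\max(\cdot, 0)$ expressions correctly absorb degenerate configurations (such as $a_i = I(r)$, $b_{i+1} = 0$, or $a_{i+1} = I(r)$ coexisting with $a_i = I(r)$) without producing spurious negative contributions.
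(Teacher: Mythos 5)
Your proposal matches the paper's proof almost exactly: both split $e_i(U)$ into the two residue classes (your types A and B are the paper's $N_1$ and $N_2$), parameterize each by position within the arithmetic progression, and translate the membership conditions $x-d\in U$, $x\notin U$, $x+m\notin U$ into inequalities on the signature coordinates, with the boundary cases $j=I(r)$ and $j=0$ handled by noting that $x+m$ leaves $T(r)$ there. The bookkeeping you flag as the "main obstacle" is exactly what the paper's short case analysis does, so your approach is sound and identical in substance.
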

\begin{proof}
  Let $N_1$ and $N_2$ denote respectively the number of elements in
  $E(U, T(r))$ congruent to $r + (i + 1)d$ and $f - r - id$ modulo
  $m$.

  The number $r + (i + 1)d + km$ is in $E(U, T(r))$ if and only if
  $a_i < k$ and either $a_{i + 1} > k$ or $a_{i + 1} = I(r)$. If $a_{i
    + 1} = I(r)$, then $a_i < k \le I(r)$, and so $N_1 = I(r) -
  a_i$. Otherwise, $a_i < k < a_{i + 1}$, and $N_1 = \max(a_{i + 1} -
  a_i - 1, 0)$.

  Similarly, $f - r - id - km$ is in $E(U, T(r))$ if and only if $b_{i
    + 1} > k$ and either $b_i < k$ or $b_i = 0$. If $b_i = 0$, then $0
  \le k < b_{i + 1}$, and so $N_2 = b_{i + 1}$. Otherwise, $b_i < k <
  b_{i + 1}$, and $N_2 = \max(b_{i + 1} - b_i - 1, 0)$.

  Writing out the formula for $N_1 + N_2$ in the various cases yields
  the result.
\end{proof}

It follows that for every pair of signatures $(u, v)$, there is a
number $G(u, v)$ such that if $U_i$ has signature $u$ and $U_{i + 1}$
has signature $v$, then $e_i(U) - e'_i(U) = G(u, v)$. We set aside the
task of actually computing $G(u, v)$ for the moment, but we note that
$G(u, v)$ does not depend explicitly on $i$ or $U$.

Let the signature of $U_i$ be $u_i$ for each $i$. We find that

\[ s(U, T(r)) = |E(U, T(r))| - |E'(U, T(r))| \ge \sum_{i = 0}^{N(r) -
  1} (e_i(U) - e'_i(U)) \]
\[ = \sum_{i = 0}^{N(r) - 2} G(u_i, u_{i + 1}) + e_{N(r) -
  1}(U) - e'_{N(r) - 1}(U) \]
\[ \ge \sum_{i = 0}^{N(r) - 2} G(u_i, u_{i + 1}) - e'_{N(r) -
  1}(U). \]

\noindent It follows, then, that

\[ w(T(r)) = \sum_{U \in \mathcal{A}(T(r))} \varphi^{-s(U, T(r))} \le
\sum_{U \in \mathcal{A}(T(r))} \varphi^{- \sum_{i = 0}^{N(r) - 2}
  G(u_i, u_{i + 1}) + e'_{N(r) - 1}(U)} \]
\[ = \sum_{U \in \mathcal{A}(T(r))} \varphi^{e'_{N(r) - 1}(U)}
\prod_{i = 0}^{N(r) - 2} \varphi^{-G(u_i, u_{i + 1})}. \]

This bound can be expressed in matrix form.

\begin{lem} \label{matrix bound}

  Let $r$ be an integer, and let the possible signatures of
  $T(r) \cap S(r + id)$ be $\{ s_1, s_2, \ldots , s_k \}$. Let ${\bf
    v}$ denote the $k$-dimensional vector whose $j$th entry is the
  value of $\varphi^{e'_{N(r) - 1}(U)}$ when $u_{N(r) - 1} =
  s_j$. Also, let ${\bf 1}$ denote the $k$-dimensional vector all of
  whose entries are $1$. Finally, let $M$ be the $k \times k$ matrix
  whose $ij$ entry is $\varphi^{-G(s_i, s_j)}$. Then,

  \[ w(T(r)) \le {\bf 1}^T M^{N(r) - 1} {\bf v}. \]
\end{lem}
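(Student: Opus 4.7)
The plan is to recognize the inequality we already have in hand as a transfer-matrix computation. Starting from the bound
\[ w(T(r)) \le \sum_{U \in \mathcal{A}(T(r))} \varphi^{e'_{N(r)-1}(U)} \prod_{i=0}^{N(r)-2} \varphi^{-G(u_i, u_{i+1})} \]
derived in the discussion preceding the lemma (where $u_i$ is the signature of $U_i := U \cap S(r+id)$), I would rewrite the right-hand side as an iterated sum over sequences of signatures and then identify it with an $(N(r)-1)$-fold matrix product.

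Two structural observations drive the argument. First, for any admissible $U \subset T(r)$, each restriction $U_i$ is admissible in $S(r+id)$: the ``no pair sums to $f+m$'' condition is inherited immediately, and if $x \in U_i$ with $x+m \in S(r+id) \subset T(r)$, then $U$-admissibility forces $x+m \in U$, hence $x+m \in U_i$. Second, as noted in the preceding subsection, an admissible subset of $S(r+id)$ is determined uniquely by its signature, so summing over admissible subsets is the same as summing over admissible signatures $s_1,\ldots,s_k$. Using non-negativity of every factor on the right-hand side, I would then enlarge the index set from $\mathcal{A}(T(r))$ to the full Cartesian product $\prod_{i=0}^{N(r)-1} \mathcal{A}(S(r+id))$, which only discards the cross-block constraints that couple different $U_i$'s and therefore can only increase the sum.

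After this enlargement, since each factor in the summand depends only on one signature (for $e'_{N(r)-1}$) or on two consecutive signatures (for $G(u_i,u_{i+1})$), the sum factors as
\[ \sum_{(j_0, j_1, \ldots, j_{N(r)-1})} \left( \prod_{i=0}^{N(r)-2} M_{j_i, j_{i+1}} \right) v_{j_{N(r)-1}}, \]
with $M_{ab} = \varphi^{-G(s_a,s_b)}$ and $v_a$ the $a$th entry of $\mathbf{v}$. Unfolding the definition of matrix-vector multiplication, the inner sums over $j_1, \ldots, j_{N(r)-1}$ compute the vector $M^{N(r)-1}\mathbf{v}$ coordinatewise, and the remaining sum over $j_0$ contracts against $\mathbf{1}$, giving $\mathbf{1}^T M^{N(r)-1} \mathbf{v}$. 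The only places that require genuine verification are the inheritance of admissibility by the restrictions $U_i$ and the justification that enlarging the index set preserves the inequality; both are routine given the block structure set up in the previous subsection, and I do not anticipate a serious obstacle.
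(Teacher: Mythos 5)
Your argument is correct and fills in precisely the details the paper leaves implicit after its displayed bound $w(T(r)) \le \sum_{U \in \mathcal{A}(T(r))} \varphi^{e'_{N(r)-1}(U)} \prod_{i=0}^{N(r)-2} \varphi^{-G(u_i,u_{i+1})}$: since $T(r)$ is the disjoint union of the $S(r+id)$, the restriction map $U \mapsto (U_i)_i$ injects $\mathcal{A}(T(r))$ into $\prod_i \mathcal{A}(S(r+id))$, the summand is a non-negative function of the signature tuple alone, and enlarging to the full product reorganizes the sum into the transfer-matrix expression $\mathbf{1}^T M^{N(r)-1}\mathbf{v}$. This is the same transfer-matrix reading the paper intends, just spelled out.
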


We may apply this specifically to the cases $I(r) = 1$ and $I(r) = 2$.

\begin{lem} \label{I(r) = 1 bound on T(r)}
  If $I(r) = 1$, then $w(T(r)) < 1.1460 \cdot 1.618^{|T(r)|}$.
\end{lem}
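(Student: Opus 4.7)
My plan is to apply the matrix bound of Lemma \ref{matrix bound}, identify the resulting $3 \times 3$ matrix explicitly, and then control its iterated action on ${\bf 1}$ via a Perron-style nonnegativity argument.

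When $I(r) = 1$, each stratum $S(r + id) = \{r + id + m,\, f - r - id\}$ has two elements (so $|T(r)| = 2N(r)$), and the three possible signatures of $U_i = U \cap S(r + id)$, namely $(0, 0, 1)$, $(1, 0, 1)$, $(1, 1, 1)$, correspond to the subsets $\{r + id + m\}$, $\emptyset$, $\{f - r - id\}$, respectively. All three satisfy $|E'(U_i, S(r + id))| = 0$ by Lemma \ref{signature to E'}, so the vector ${\bf v}$ in Lemma \ref{matrix bound} equals ${\bf 1}$. Computing $G(s_i, s_j)$ from Lemma \ref{signature to E} for each of the nine ordered signature pairs produces
\[
M = \begin{pmatrix} 1 & \varphi^{-1} & \varphi^{-2} \\ 1 & 1 & \varphi^{-1} \\ 1 & 1 & 1 \end{pmatrix},
\]
so that $w(T(r)) \le {\bf 1}^T M^{N(r) - 1} {\bf 1}$ for $N(r) \ge 1$, while $w(T(r)) = 1 < 1.1460$ trivially when $N(r) = 0$.

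Setting $b_N = {\bf 1}^T M^N {\bf 1}$ and using $3/1.618^2 < 1.1460$, the lemma reduces to showing $b_N \le 3 \cdot 1.618^{2N}$ for every $N \ge 0$, which will then give $w(T(r)) \le b_{N(r)-1} \le 3 \cdot 1.618^{2(N(r)-1)} < 1.1460 \cdot 1.618^{|T(r)|}$. I will prove this by induction on $N$. The base $b_0 = 3$ is immediate, and $b_1 = 6 + \varphi < 3 \cdot 1.618^2$ is a direct numerical check. For the inductive step, the key will be the claim that the vector $(1.618^2 I - M) M {\bf 1}$ has only nonnegative entries. Since $M$ is an entrywise nonnegative matrix that commutes with $1.618^2 I - M$, this propagates: multiplying on the left by $M^k$ preserves nonnegativity, so $(1.618^2 I - M) M^{k+1} {\bf 1} \ge {\bf 0}$ componentwise for every $k \ge 0$, and pairing with ${\bf 1}^T$ yields $b_{k+2} \le 1.618^2 \cdot b_{k+1}$, closing the induction.

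The main obstacle is verifying the initial componentwise inequality $(1.618^2 I - M) M {\bf 1} \ge {\bf 0}$. Direct computation gives $M {\bf 1} = (2,\, \varphi^2,\, 3)^T$ and $M^2 {\bf 1} = (8 - 2\varphi,\, 4\varphi,\, 6 + \varphi)^T$, so the three entries of $1.618^2 M {\bf 1} - M^2 {\bf 1}$ become $2(1.618^2 + \varphi - 4)$, $\varphi(1.618^2 \varphi - 4)$, and $3 \cdot 1.618^2 - 6 - \varphi$, each of which is positive from the elementary bounds $\varphi > 1.6$ (for the first two) and $\varphi < 1.85$ (for the third). The slack is relatively small (roughly $0.24$ in the third entry), so the computation will have to distinguish $1.618$ from the slightly larger $\varphi = 1.6180339\ldots$; this is bookkeeping rather than a structural difficulty.
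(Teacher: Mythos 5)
Your proof is correct and is built on the same framework as the paper's (applying Lemma \ref{matrix bound} with $\mathbf{v} = \mathbf{1}$ and showing $b_N := \mathbf{1}^T M^N \mathbf{1} \le 3 \cdot 1.618^{2N}$), but it diverges in two ways worth recording. First, your transition matrix
\[
M = \begin{pmatrix} 1 & \varphi^{-1} & \varphi^{-2} \\ 1 & 1 & \varphi^{-1} \\ 1 & 1 & 1 \end{pmatrix}
\]
(in the ordering $(0,0,1), (1,0,1), (1,1,1)$) is what Lemmas \ref{signature to E'} and \ref{signature to E} actually produce: for example, $G\bigl((1,0,1),(1,1,1)\bigr) = 1$ since $N_1 = I(r) - a_i = 0$ and $N_2 = b_{i+1} = 1$, whereas the paper's displayed matrix records $1$ in that slot. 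Your $M$ agrees with the paper's $W_1 = 3$ and $W_2 = 6 + \varphi$ but gives $W_3 = 14 + 3\varphi$ rather than the appendix's $11 + 5\varphi$; the paper's printed matrix appears to contain a transcription error. Second, instead of the paper's Cayley--Hamilton recurrence (whose stated step ``$W_{n+1} \le 2.619\, W_n < 1.618^2\, W_n$'' does not literally hold, since $2.619 > 1.618^2 \approx 2.6179$), you verify $(1.618^2 I - M) M \mathbf{1} \ge \mathbf{0}$ once, then propagate it by left-multiplying with the nonnegative matrix $M^k$ and using commutativity to conclude $b_{k+2} \le 1.618^2 b_{k+1}$ for all $k \ge 0$. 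Together with the base checks $b_0 = 3$ and $b_1 = 6 + \varphi < 3 \cdot 1.618^2$, this closes the induction cleanly. Your arithmetic for $M\mathbf{1} = (2, \varphi^2, 3)^T$, $M^2 \mathbf{1} = (8-2\varphi, 4\varphi, 6+\varphi)^T$, and the positivity of all three entries of $1.618^2 M\mathbf{1} - M^2 \mathbf{1}$ checks out, and $3/1.618^2 \approx 1.14595 < 1.1460$ completes the lemma, including the trivial $N(r) = 0$ case. In short: same decomposition, but a more careful (and in fact more rigorous) tail argument, and a correction to the paper's stated matrix.
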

\begin{proof}
  Using the same notation as above, the possible signatures of the
  $U_i$ when $U \in \mathcal{A}(T(r))$ and $I(r) = 1$ are $s_1 = (1,
  0, 1)$, $s_2 = (1, 1, 1)$, and $s_3 = (0, 0, 1)$. By Lemmas
  \ref{signature to E'} and \ref{signature to E}, we find that the
  matrix $\left[ \varphi^{-G(s_i, s_j)} \right]$ is

  \[ \left[ \begin{tabular}{ c c c }                           
      $1$ &  $1$ &  $1$ \\
      $\varphi^{-1}$ &  $1$ &  $1$ \\
      $\varphi^{-2}$ &  $1$ &  $\varphi^{-1}$ \\
    \end{tabular} \right]. \]

  \noindent It is routine to check that ${\bf v}$ is

  \[ \left[ \begin{tabular}{ c }
      $1$ \\
      $1$ \\
      $1$ \\
    \end{tabular} \right]. \]

  \noindent Applying Lemma \ref{matrix bound}, we have

  \[ w(T(r)) \le \left[ \begin{tabular}{c c c}
      $1$ & $1$ & $1$ \\
    \end{tabular} \right] \left[ \begin{tabular}{ c c c }
      $1$ &  $1$ &  $1$ \\
      $\varphi^{-1}$ &  $1$ &  $1$ \\
      $\varphi^{-2}$ &  $1$ &  $\varphi^{-1}$ \\
    \end{tabular} \right]^{N(r) - 1} \left[ \begin{tabular}{ c }
      $1$ \\
      $1$ \\
      $1$ \\
    \end{tabular} \right]. \]

  Let $W_n$ denote the right hand side of the above inequality when
  $N(r) = n$. It is not hard to show by induction that $W_n \le 3
  \cdot 1.618^{2n - 2}$ (see Appendix for the computation
  details). This gives

  \[ w(T(r)) \le W_{N(r)} \le 3 \cdot 1.618^{2N(r) - 2} \]
  \[ < 1.1460 \cdot 1.618^{2N(r)} = 1.1460 \cdot 1.618^{|T(r)|}. \]
\end{proof}

\begin{lem} \label{I(r) = 2 bound on T(r)} 
  If $I(r) = 2$, then $w(T(r)) < 1.0559 \cdot 1.618^{|T(r)|}$.
\end{lem}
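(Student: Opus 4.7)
The plan is to mirror the proof of Lemma \ref{I(r) = 1 bound on T(r)}, applying Lemma \ref{matrix bound} to reduce the statement to bounding a power of an explicit matrix. First I would enumerate the possible signatures of $U_i$ for $U \in \mathcal{A}(T(r))$ when $I(r) = 2$. From the argument preceding Lemma \ref{signature to E'} (and used already in the proof of Lemma \ref{I(r) = 1 or 2 bounds on S(r)}), there are exactly six: $s_1 = (2,0,2)$, $s_2 = (2,1,2)$, $s_3 = (2,2,2)$, $s_4 = (1,0,2)$, $s_5 = (1,1,2)$, and $s_6 = (0,0,2)$. By Lemma \ref{signature to E'}, we have $|E'(U, S(r+id))| = 0$ for the four signatures $s_1, s_2, s_4, s_5$ and $|E'(U, S(r+id))| = 1$ for $s_3$ and $s_6$, so the vector $\mathbf{v}$ of Lemma \ref{matrix bound} has entries $(1, 1, \varphi, 1, 1, \varphi)^T$.

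Next I would compute the $6 \times 6$ matrix $M$ with $(i,j)$ entry $\varphi^{-G(s_i, s_j)}$ by grinding through the four cases of Lemma \ref{signature to E} for each ordered pair $(s_i, s_j)$. This is the most mechanical and tedious step, but it is entirely routine, analogous to the $3 \times 3$ computation that produced the matrix in the proof of Lemma \ref{I(r) = 1 bound on T(r)}. Once $M$ is in hand, Lemma \ref{matrix bound} gives
\[
w(T(r)) \le \mathbf{1}^T M^{N(r)-1} \mathbf{v}.
\]

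The remaining task is to bound this quantity by $1.0559 \cdot 1.618^{|T(r)|}$. Since $|S(r + id)| = 2I(r) = 4$ for each $i < N(r)$, we have $|T(r)| = 4 N(r)$. So, writing $W_n = \mathbf{1}^T M^{n-1} \mathbf{v}$, it suffices to show $W_n \le C \cdot 1.618^{4(n-1)}$ for a constant $C$ with $C / 1.618^4 < 1.0559$, and then conclude as in the $I(r) = 1$ case. By direct computation I would verify $W_1 = \mathbf{1}^T \mathbf{v} = 4 + 2\varphi$, which matches the bound on $w(S(r))$ in Lemma \ref{I(r) = 1 or 2 bounds on S(r)}(ii), and I would verify the base cases $W_1, W_2$ satisfy the desired inequality explicitly. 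For the induction step, one shows that multiplication by $M$ multiplies the relevant linear functional by at most $1.618^4 = \varphi^2 + \varphi^{-2} + 2$, which amounts to checking that each column of $M$ has appropriately bounded contribution; this is analogous to the eigenvalue-style argument deferred to the appendix in the $I(r) = 1$ case.

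The main obstacle is purely bookkeeping: producing the $36$ entries of $M$ without error, and then extracting a clean recursive bound $W_{n+1} \le 1.618^4 W_n$ (possibly by passing to a smaller, aggregated matrix, or by finding a nonnegative vector $\mathbf{u}$ with $\mathbf{1}^T M \le 1.618^4 \mathbf{u}^T$ and $\mathbf{u} \le \mathbf{1}$ up to the constant $C$). The constant $1.0559$ is slightly tighter than $1.1460$ from the $I(r) = 1$ case, which is consistent with the heuristic that larger $I(r)$ forces more balanced signatures and hence a smaller leading constant; once the matrix $M$ is computed, the numerical verification of $1.0559$ follows from checking $W_1, W_2$ by hand and invoking the induction.
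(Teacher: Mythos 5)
Your proposal follows the paper's proof exactly: enumerate the six signatures for $I(r)=2$, write down the $6\times 6$ matrix $M$ and vector $\mathbf{v}$ from Lemmas \ref{signature to E'} and \ref{signature to E}, apply Lemma \ref{matrix bound}, and then establish $W_n \le (4 + 2\varphi)\cdot 1.618^{4n-4}$ (so that $(4+2\varphi)/1.618^4 < 1.0559$). Your $\mathbf{v}$ and $W_1 = 4 + 2\varphi$ are correct under your ordering of signatures.

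One caveat on the induction step: the specific suggestion of ``checking that each column of $M$ has appropriately bounded contribution'' will not work as literally stated, because some column sums of $M$ equal $4 + 2\varphi \approx 7.236$, which exceeds $1.618^4 \approx 6.854$. So the naive bound $\mathbf{1}^T M \le 1.618^4\,\mathbf{1}^T$ is false and does not give $W_{n+1}\le 1.618^4 W_n$ directly. The paper instead derives a linear recurrence for $W_n$ from the Cayley--Hamilton theorem applied to $M$, verifies a few base cases, and proves $W_{n+1}\le 6.8\,W_n$ inductively (using $W_{k+1}\ge W_k\ge 0$), then notes $6.8 < 1.618^4$. Your hedged alternatives (aggregating states, or a weighted left vector $\mathbf{w}^T M \le c\,\mathbf{w}^T$) could also be made to work, but would need more care than the plain column-sum check.
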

\begin{proof}
  The possible signatures of the $U_i$ when $U \in \mathcal{A}(T(r))$
  and $I(r) = 2$ are $s_1 = (2, 0, 2)$, $s_2 = (2, 1, 2)$, $s_3 = (1,
  0, 2)$, $s_4 = (1, 1, 2)$, $s_5 = (0, 0, 2)$, and $s_6 = (2, 2,
  2)$. By Lemmas \ref{signature to E'} and \ref{signature to E}, we
  find that the matrix $\left[ \varphi^{-G(s_i, s_j)} \right]$ is

  \[ \left[ \begin{tabular}{ c c c c c c }
      $1$ &  $\varphi^{-1}$ &  $1$ &  $\varphi^{-1}$ &  $1$ &  $\varphi^{-2}$ \\
      $1$ &  $1$ &  $1$ &  $1$ &  $1$ &  $1$ \\
      $\varphi^{-1}$ &  $\varphi^{-2}$ &  $1$ &  $\varphi^{-1}$ &  $1$ &  $\varphi^{-3}$ \\
      $\varphi^{-1}$ &  $\varphi^{-1}$ &  $1$ &  $1$ &  $1$ &  $\varphi^{-1}$ \\
      $\varphi^{-1}$ &  $\varphi^{-2}$ &  $\varphi$ &  $1$ &  $\varphi$ &  $\varphi^{-3}$ \\
      $\varphi$ &  $\varphi$ &  $\varphi$ &  $\varphi$ &  $\varphi$ &  $\varphi$ \\
  \end{tabular} \right]. \]

  \noindent It is routine to check that ${\bf v}$ is

  \[ \left[ \begin{tabular}{ c }
      $1$ \\
      $1$ \\
      $1$ \\
      $1$ \\
      $\varphi$ \\
      $\varphi$ \\
    \end{tabular} \right]. \]

  \noindent Applying Lemma \ref{matrix bound}, we have

  \[ w(T(r)) \le \left[ \begin{tabular}{c c c c c c}
      $1$ & $1$ & $1$ & $1$ & $1$ & $1$ \\
    \end{tabular} \right] \left[ \begin{tabular}{ c c c c c c }
      $1$ &  $\varphi^{-1}$ &  $1$ &  $\varphi^{-1}$ &  $1$ &  $\varphi^{-2}$ \\
      $1$ &  $1$ &  $1$ &  $1$ &  $1$ &  $1$ \\
      $\varphi^{-1}$ &  $\varphi^{-2}$ &  $1$ &  $\varphi^{-1}$ &  $1$ &  $\varphi^{-3}$ \\
      $\varphi^{-1}$ &  $\varphi^{-1}$ &  $1$ &  $1$ &  $1$ &  $\varphi^{-1}$ \\
      $\varphi^{-1}$ &  $\varphi^{-2}$ &  $\varphi$ &  $1$ &  $\varphi$ &  $\varphi^{-3}$ \\
      $\varphi$ &  $\varphi$ &  $\varphi$ &  $\varphi$ &  $\varphi$ &  $\varphi$ \\
    \end{tabular} \right]^{N(r) - 1} \left[ \begin{tabular}{ c }
      $1$ \\
      $1$ \\
      $1$ \\
      $1$ \\
      $\varphi$ \\
      $\varphi$ \\
    \end{tabular} \right]. \]

  Let $W_n$ denote the right hand side of the above inequality when
  $N(r) = n$. This is a closed form for $W_n$, and straightforward
  computations yield $W_n \le (4 + 2\varphi) \cdot 1.618^{4n - 4}$
  (for the details of the calculation, see Appendix). This gives

  \[ w(T(r)) \le W_{N(r)} \le (4 + 2 \varphi) \cdot 1.618^{4N(r) -
    4} \]
  \[ < 1.0559 \cdot 1.618^{4N(r)} = 1.0559 \cdot 1.618^{|T(r)|}. \]
\end{proof}

\subsection{Proof of Lemma \ref{interval weight}.}

We have now developed the necessary tools to prove Lemma \ref{interval
  weight}.

\begin{proof}[Proof of Lemma \ref{interval weight}]
  Let $\ell$ be the remainder when $f$ is divided by $m$. For
  convenience of notation, define $S(r)$ to be the empty set when $r$
  is not an integer. Note that when $0 \le x < \frac{\ell}{2}$, $I(x)
  = I(0)$, and when $\ell + 1 \le x < \frac{m + \ell}{2}$, $I(x) =
  I(0) - 1$.

  Rather than bounding $w(S)$ directly, it will be more convenient to
  bound the weight of a similar set. Define

  \[ S' = \left( \bigcup_{x = 0}^{\lfloor \frac{\ell}{2} \rfloor} S(x)
  \right) \cup \left( \bigcup_{x = \ell + 1}^{\lfloor \frac{m +
        \ell}{2} \rfloor} S(x) \right), \]

  \noindent and define $S'' = V_{m + d}(S')$. Note that

  \[ S' \cup S(\frac{\ell}{2}) \cup S(\frac{m + \ell}{2}) = \{ m, m +
  1, \ldots , f \} \]
  \[ S'' \cup V_{m + d}(S(\frac{\ell}{2})) \cup V_{m + d}(S(\frac{m +
    \ell}{2})) = S \cup \{ f \}. \]

  We claim that $w(S'') \le 1.618^{|S'|}$. To show this, we consider
  four cases according to the value of $I(0)$.

  \begin{enumerate}
  \item[] \textbf{Case $I(0) > 3$.} We have

    \[ S'' = \left( \bigcup_{x = 0}^{\lfloor \frac{\ell}{2} \rfloor}
      V_{m + d}(S(x)) \right) \cup \left( \bigcup_{x = \ell +
        1}^{\lfloor \frac{m + \ell}{2} \rfloor} V_{m + d}(S(x))
    \right) \]

    Note that wherever $x$ appears in the above equation, $I(x) \ge
    3$. Thus, Lemma \ref{I(r) >= 3 bound on S(r)} applies, and

    \[ w(S'') \le \prod_{x = 0}^{\lfloor \frac{\ell}{2} \rfloor }
    w(S(x)) \prod_{x = \ell + 1}^{\lfloor \frac{m + \ell}{2} \rfloor}
    w(S(x)) \]
    \[ \le \prod_{x = 0}^{\lfloor \frac{\ell}{2} \rfloor }
    1.618^{|S(x)|} \prod_{x = \ell + 1}^{\lfloor \frac{m + \ell}{2}
      \rfloor} 1.618^{|S(x)|} = 1.618^{|S'|}. \]

  \item[] \textbf{Case $I(0) = 3$.} If $d \le \ell$, we again write

    \[ S'' = \left( \bigcup_{x = 0}^{\lfloor \frac{\ell}{2} \rfloor}
      V_{m + d}(S(x)) \right) \cup \left( \bigcup_{x = \ell +
        1}^{\lfloor \frac{m + \ell}{2} \rfloor} V_{m + d}(S(x))
    \right), \]

    noting that $I(x) = 2$ when $\ell + 1 \le x \le \lfloor \frac{m +
      \ell}{2} \rfloor$. Applying Lemmas \ref{I(r) >= 3 bound on S(r)}
    and \ref{I(r) = 2 bound on T(r)}, we have

    \[ w(S'') \le \prod_{x = 0}^{\lfloor \frac{\ell}{2} \rfloor}
    w(S(x)) \cdot \prod_{x = \ell + 1}^{\min(\lfloor \frac{m +
        \ell}{2} \rfloor, \ell + d)} w(T(x)) \]
    \[ \le \prod_{x = 0}^{\lfloor \frac{\ell}{2} \rfloor} 0.8557 \cdot
    1.618^{|S(x)|} \cdot \prod_{x = \ell + 1}^{\min(\lfloor \frac{m +
        \ell}{2} \rfloor, \ell + d)} 1.0559 \cdot 1.618^{|T(x)|} \]
    \[ \le 0.8557^{\ell/2} \cdot 1.0559^d \cdot 1.618^{|S'|} \le
    1.618^{|S'|}. \]

    If $\ell < d \le \frac{m + \ell}{2}$, then we use the
    decomposition

    \[ S'' = \left( \bigcup_{x = 0}^{\lfloor \frac{\ell}{2} \rfloor}
      V_{m + d}(S(x)) \right) \cup \left( \bigcup_{x = \ell + 1}^{d}
      V_{m + d}(S(x)) \right) \cup \left( \bigcup_{x = d}^{\lfloor
        \frac{m + \ell}{2} \rfloor} V_{m + d}(S(x)) \right), \]

    and applying Lemmas \ref{I(r) >= 3 bound on S(r)} and \ref{I(r) =
      2 bound on T(r)} and part (ii.b) of Lemma \ref{I(r) = 1 or 2
      bounds on S(r)}, we have

    \[ w(S'') \le \prod_{x = 0}^{\lfloor \frac{\ell}{2} \rfloor}
    w(S(x)) \cdot \prod_{x = \ell + 1}^d w(S(x) \setminus \{ m + x \})
    \cdot \prod_{x = d + 1}^{\min(\lfloor \frac{m + \ell}{2} \rfloor,
      2d)} w(T(x)) \]
    \[ \le \prod_{x = 0}^{\lfloor \frac{\ell}{2} \rfloor} 0.8557 \cdot
    1.618^{|S(x)|} \cdot \prod_{x = \ell + 1}^d 0.8198 \cdot
    1.618^{|S(x)|} \cdot \prod_{x = d + 1}^{\min(\lfloor \frac{m +
        \ell}{2} \rfloor, 2d)} 1.0559 \cdot 1.618^{|T(x)|} \]
    \[ \le 0.8557^{\ell/2} \cdot 0.8198^{d - \ell} \cdot 1.0559^d
    \cdot 1.618^{|S'|} \le 1.618^{|S'|}. \]

    Finally, if $\frac{m + \ell}{2} < d$, then

    \[ S'' = \left( \bigcup_{x = 0}^{\lfloor \frac{\ell}{2} \rfloor}
      V_{m + d}(S(x)) \right) \cup \left( \bigcup_{x = d}^{\lfloor
        \frac{m + \ell}{2} \rfloor} V_{m + d}(S(x)) \right), \]

    and applying Lemma \ref{I(r) >= 3 bound on S(r)} and part (ii.b) of
    Lemma \ref{I(r) = 1 or 2 bounds on S(r)} yields

    \[ w(S'') \le \prod_{x = 0}^{\lfloor \frac{\ell}{2} \rfloor}
    w(S(x)) \cdot \prod_{x = \ell + 1}^{\lfloor \frac{m + \ell}{2}
      \rfloor} w(S(x) \setminus \{ m + x \}) \]
     \[ \le \prod_{x = 0}^{\lfloor \frac{\ell}{2} \rfloor} 0.8557
     \cdot 1.618^{|S(x)|} \cdot \prod_{x = \ell + 1}^{\lfloor \frac{m
         + \ell}{2} \rfloor} 0.8198 \cdot 1.618^{|T(x)|} \]
    \[ \le 1.618^{|S'|}. \]

  \item[] \textbf{Case $I(0) = 2$.} If $d \le \frac{\ell}{2}$, then we
    decompose

    \[ S'' = \left( \bigcup_{x = 0}^{d - 1} V_{m + d}(S(x)) \right)
    \cup \left( \bigcup_{x = d + 1}^{\min(\lfloor \frac{\ell}{2}
        \rfloor, 2d)} V_{m + d}(T(x)) \right) \cup \left( \bigcup_{x =
        \ell + 1}^{\min(\lfloor \frac{m + \ell}{2} \rfloor, \ell + d)}
      V_{m + d}(T(x)) \right). \]

    By Lemmas \ref{I(r) = 1 bound on T(r)} and \ref{I(r) = 2 bound on
      T(r)} and part (ii.b) of Lemma \ref{I(r) = 1 or 2 bounds on
      S(r)}, we find that

    \[ w(S'') \le \prod_{x = 0}^{d - 1} w(S(x) \setminus \{ m + x \})
    \cdot \prod_{x = d + 1}^{\min(\lfloor \frac{\ell}{2} \rfloor, 2d)}
    w(T(x)) \prod_{x = \ell + 1}^{\min(\lfloor \frac{m + \ell}{2}
      \rfloor, \ell + d)} w(T(x)) \]
    \[ \le \prod_{x = 0}^{d - 1} 0.8198 \cdot 1.618^{|S(x)|} \cdot
    \prod_{x = d + 1}^{\min(\lfloor \frac{\ell}{2} \rfloor, 2d)}
    1.0559 \cdot 1.618^{|T(x)|} \prod_{x = \ell + 1}^{\min(\lfloor
      \frac{m + \ell}{2} \rfloor, \ell + d)} 1.1460 \cdot
    1.618^{|T(x)|} \]
    \[ \le 0.8198^d \cdot 1.0559^d \cdot 1.1460^d \cdot 1.618^{|S'|}
    \le 1.618^{|S'|}. \]

    If $\frac{\ell}{2} < d \le \ell$, then

    \[ S'' = \left( \bigcup_{x = 0}^{\ell - d - 1} V_{m + d}(S(x))
    \right) \cup \left( \bigcup_{x = \ell - d}^{\lfloor \frac{\ell}{2}
        \rfloor} V_{m + d}(S(x)) \right) \cup \left( \bigcup_{x = \ell
        + 1}^{\min(\lfloor \frac{m + \ell}{2} \rfloor, \ell + d)} V_{m
        + d}(T(x)) \right), \]

    and 

    \[ w(S'') \le \prod_{x = 0}^{\ell - d - 1} w(S(x) \setminus \{ m +
    x \}) \cdot \prod_{x = \ell - d}^{\lfloor \frac{\ell}{2} \rfloor}
    w(S(x) \setminus \{ m + x , m + \ell - x \} ) \prod_{x = \ell +
      1}^{\min(\lfloor \frac{m + \ell}{2} \rfloor, \ell + d)}
    w(T(x)) \]
    \[ \le \prod_{x = 0}^{\ell - d - 1} 0.8198 \cdot 1.618^{|S(x)|}
    \cdot \prod_{x = \ell - d}^{\lfloor \frac{\ell}{2} \rfloor} 0.5837
    \cdot 1.618^{|S(x)|} \prod_{x = \ell + 1}^{\min(\lfloor \frac{m +
        \ell}{2} \rfloor, \ell + d)} 1.1460 \cdot 1.618^{|T(x)|} \]
    \[ \le 0.8198^{\ell - d} \cdot 0.5837^{d - \ell/2} \cdot 1.1460^d
    \cdot 1.618^{|S'|} \le 1.618^{|S'|}. \]

    If $\ell < d \le \frac{m + \ell}{2}$, then

    \[ S'' = \left( \bigcup_{x = 0}^{\lfloor \frac{\ell}{2} \rfloor}
      V_{m + d}(S(x)) \right) \cup \left( \bigcup_{x = \ell + 1}^{d}
      V_{m + d}(S(x)) \right) \cup \left( \bigcup_{x = d +
        1}^{\min(\lfloor \frac{m + \ell}{2} \rfloor, 2d)} V_{m +
        d}(T(x)) \right), \]
    
    and

    \[ w(S'') \le \prod_{x = 0}^{\lfloor \frac{\ell}{2} \rfloor}
    w(S(x) \setminus \{ m + x, m + \ell - x \}) \prod_{x = \ell +
      1}^{d} w(S(x) \setminus \{ m + x \} ) \prod_{x = d +
      1}^{\min(\lfloor \frac{m + \ell}{2} \rfloor, 2d)} w(T(x)) \]
    \[ \le \prod_{x = 0}^{\lfloor \frac{\ell}{2} \rfloor} 0.5837 \cdot
    1.618^{|S(x)|} \prod_{x = \ell + 1}^{d} 0.7726 \cdot
    1.618^{|S(x)|} \prod_{x = d + 1}^{\min(\lfloor \frac{m + \ell}{2}
      \rfloor, 2d)} 1.1460 \cdot 1.618^{|T(x)|} \]
    \[ \le 0.5837^{\ell / 2} \cdot 0.7726^{d - \ell} \cdot 1.1460^d
    \cdot 1.618^{|S'|} \le 1.618^{|S'|}. \]

    Finally, if $\frac{m + \ell}{2} < d$, then

    \[ S'' = \left( \bigcup_{x = 0}^{\lfloor \frac{\ell}{2} \rfloor}
      V_{m + d}(S(x)) \right) \cup \left( \bigcup_{x = \ell +
        1}^{\lfloor \frac{m + \ell}{2} \rfloor} V_{m + d}(S(x))
    \right), \]

    and

    \[ w(S'') \le \prod_{x = 0}^{\lfloor \frac{\ell}{2} \rfloor}
    w(S(x) \setminus \{ m + x, m + \ell - x \}) \prod_{x = \ell +
      1}^{\lfloor \frac{m + \ell}{2} \rfloor} w(S(x) \setminus \{ m +
    x \}) \]
    \[ \le \prod_{x = 0}^{\lfloor \frac{\ell}{2} \rfloor} 0.5837 \cdot
    1.618^{|S(x)|} \prod_{x = \ell + 1}^{\lfloor \frac{m + \ell}{2}
      \rfloor} 0.7726 \cdot 1.618^{|S(x)|} \]
    \[ \le 0.5837^{\ell / 2} \cdot 0.7726^{\ell / 2} \cdot
    1.618^{|S'|} \le 1.618^{|S'|}. \]

  \item[] \textbf{Case $I(0) = 1$.} Note that if $I(0) = 1$ then $f =
    m + \ell$, and so $d \le \ell$ unless $S''$ is empty, in which
    case $w(S'') \le 1.618^{|S'|}$ holds trivially. If $d \le
    \frac{\ell}{2}$, then

    \[ S'' = \left( \bigcup_{x = 0}^{d} V_{m + d}(S(x)) \right) \cup
    \left( \bigcup_{x = d + 1}^{\min(\lfloor \frac{\ell}{2} \rfloor,
        2d)} V_{m + d}(T(x)) \right). \]

    By part (i.b) of Lemma \ref{I(r) = 1 or 2 bounds on S(r)} and
    Lemma \ref{I(r) = 1 bound on T(r)}, we obtain

    \[ w(S'') \le \prod_{x = 0}^d w(S(x) \setminus \{ m + x \})
    \prod_{x = d + 1}^{\min(\lfloor \frac{\ell}{2} \rfloor, 2d)}
    w(T(x)) \]
    \[ \le \prod_{x = 0}^d 0.7726 \cdot 1.618^{|S(x)|} \prod_{x = d +
      1}^{\min(\lfloor \frac{\ell}{2} \rfloor, 2d)} 1.1460 \cdot
    1.618^{|T(x)|} \]
    \[ \le 0.7726^d \cdot 1.1460^d \cdot 1.618^{|S'|} \le
    1.618^{|S'|}. \]

    If instead $\frac{\ell}{2} < d \le \ell$, then

    \[ S'' = \bigcup_{x = 0}^{\lfloor \frac{\ell}{2} \rfloor} V_{m +
      d}(S(x)). \]

    By part (i.b) of Lemma \ref{I(r) = 1 or 2 bounds on S(r)}, we
    have

    \[ w(S'') \le \prod_{x = 0}^{\lfloor \frac{\ell}{2} \rfloor}
    w(S(x) \setminus \{ m + x \}) \]
    \[ \le \prod_{x = 0}^{\lfloor \frac{\ell}{2} \rfloor} 0.7726 \cdot
    1.618^{|S(x)|} \]
    \[ \le 1.618^{|S'|}. \]

  \end{enumerate}

  This covers all possible values of $I(0)$, establishing that $w(S'')
  \le 1.618^{|S'|}$. We now turn to the relatively simple task of
  bounding $w(S)$ in terms of $w(S'')$. First, note that for any
  admissible subset $U$ of $S$, $U \cup \{ f \}$ is an admissible
  subset of $S \cup \{ f \}$. Furthermore, for any element $x \in E(U
  \cup \{ f \}, S \cup \{ f \})$, we also have $x \in E(U,
  S)$. Finally, if $x \in E'(U, S)$, then clearly $x \in E'(U \cup \{
  f \}, S \cup \{ f \})$. It follows that

  \[ s(U, S) = |E(U, S)| - |E'(U, S)| \]
  \[ \ge |E(U \cup \{ f \}, S \cup \{ f \})| - |E'(U \cup \{ f \}, S
  \cup \{ f \})| \]
  \[ = s(U \cup \{ f \}, S \cup \{ f \}). \]

  \noindent Thus,

  \[ w(S) = \sum_{U \in \mathcal{A}(S)} \varphi^{-s(U, S)} \le \sum_{U
    \in \mathcal{A}(S)} \varphi^{-s(U \cup \{ f \}, S \cup \{ f
    \})} \]
  \[ \le \sum_{U \in \mathcal{A}(S \cup \{ f \})} \varphi^{-s(U, S
    \cup \{ f \})} = w(S \cup \{ f \}). \]

  Using the decomposition $S \cup \{ f \} = S'' \cup S(\frac{\ell}{2})
  \cup S(\frac{m + \ell}{2})$ and Lemma \ref{2r = f mod m bound on
    S(r)}, we find that

  \[ w(S) \le w(S \cup \{ f \}) \le w(S'') w \left( S \left(
      \frac{\ell}{2} \right) \right) w \left( S \left( \frac{m +
        \ell}{2} \right) \right) \]
  \[ \le 1.618^{|S'|} \cdot 1.618^{|S(\frac{\ell}{2})|} \cdot
  1.618^{|S(\frac{m + \ell}{2})|} = 1.618^{f - m + 1} = 1.618^{|S| + d
    + 2}, \]

  \noindent as desired.
\end{proof}

\section{Proof of Lemma \ref{main lemma}}

We are now in a position to prove Lemma \ref{main lemma}. Recall that
$\mathcal{S}(m, f)$ denotes the set of all strongly descended
numerical semigroups having multiplicity $m$ and Frobenius number
$f$. Define $\mathcal{S}(m, f, d)$ to be the subset of $\mathcal{S}(m,
f)$ consisting of those semigroups whose second smallest non-zero
element is $m + d$. For any $\Lambda \in \mathcal{S}(m, f)$, we know
that $f + 1 \in \Lambda$. Hence, $d \le f - m + 1$, and so
$\mathcal{S}(m, f) = \bigcup_{d = 1}^{f - m + 1} \mathcal{S}(m, f,
d)$. We will prove Lemma \ref{main lemma} by decomposing
$\mathcal{S}(m, f)$ in this way and applying Lemma \ref{interval
  weight}.

\begin{proof}[Proof of Lemma \ref{main lemma}]

  As in Lemma \ref{interval weight}, fix values for $m$, $f$, and $d$,
  and define $S = \{ m + d + 1, m + d + 2, \ldots , f - 1 \}$. For any
  $\Lambda \in \mathcal{S}(m, f, d)$, because $\Lambda$ is strongly
  descended, $f + m$ is an effective generator by Lemma
  \ref{Bras-Amoros}. Hence, no two elements of $\Lambda \cap S$ sum to
  $f + m$, and furthermore, if $x \in \Lambda$, then $x + m \in
  \Lambda$, so $\Lambda \cap S$ is an $(m, f, d)$-admissible subset of
  $S$.

  We now give an upper bound on the number of effective generators of
  $\Lambda$ in terms of $E(\Lambda \cap S, S)$ and $E'(\Lambda \cap S,
  S)$. First, if $x \in S$ satisfies $x + m \not \in S$, then $x + m
  \in [f, f + m - 1]$. If in addition we know that $x + m \in
  \Lambda$, then in fact $x + m \in [f + 1, f + m - 1]$, since $f \not
  \in \Lambda$.

  Now, suppose that $x \in E(\Lambda \cap S, S)$. Then, $x - d \in
  \Lambda$, so upon noting that $m + d \in \Lambda$, we find that $x +
  m \in \Lambda$, and $x + m = (x - d) + (m + d)$ is not an effective
  generator. Furthermore, by the definition of $E(\Lambda \cap S, S)$,
  $x \in S$ while $x + m \not \in S$. Thus, $x + m \in [f + 1, f + m -
  1]$.

  For an element $x \in \Lambda \cap S$ that is not in $E'(\Lambda
  \cap S, S)$, we have $x + m \in \Lambda$ but $x + m \not \in \Lambda
  \cap S$, so $x + m \not \in S$. Thus, we again have $x + m \in [f +
  1, f + m - 1]$, and since $x \in \Lambda$ and $m \in \Lambda$, $x +
  m$ is not an effective generator.

  Note that the sets $E(\Lambda \cap S, S)$ and $(\Lambda \cap S)
  \setminus E'(\Lambda \cap S, S)$ are disjoint. For any $x$ in either
  set, $x + m$ is a non-effective generator in the interval $[f + 1, f
  + m - 1]$. It follows that there are at least

  \[ | E(\Lambda \cap S) | + | (\Lambda \cap S) \setminus E'(\Lambda
  \cap S) | \]
  \[ \le | E(\Lambda \cap S) | + | \Lambda \cap S | - | E'(\Lambda
  \cap S) | \]
  \[ = | \Lambda \cap S | - s(\Lambda \cap S) \]

  \noindent elements of $\Lambda$ in the interval $[f + 1, f + m - 1]$
  that are not effective generators. Since the effective generators of
  $\Lambda$ are all in the interval $[f + 1, f + m]$, it follows that

  \[ h(\Lambda) \le m - | \Lambda \cap S | + s(\Lambda \cap S). \]

  \noindent Note that $g(\Lambda) = f - | \Lambda \cap [1, f] | = f -
  | \{ m, m + d \} \cup (\Lambda \cap S) |$, and $|S| = f - m - d -
  1$. Substituting in these identities, we obtain

  \[ h(\Lambda) \le m - (f - 2 - g(\Lambda)) + s(\Lambda \cap S) \]
  \[ g(\Lambda) - h(\Lambda) \ge f - m - 2 - s(\Lambda \cap S) \]
  \[ g(\Lambda) - h(\Lambda) \ge |S| + d - 1 - s(\Lambda \cap S). \]

  \noindent Using Lemma \ref{interval weight}, we find that

  \[ \sum_{\Lambda \in \mathcal{S}(m, f, d)} \varphi^{-g(\Lambda) -
    h(\Lambda)} \le \varphi^{-|S| - d + 1} \sum_{\Lambda \in
    \mathcal{S}(m, f, d)} \varphi^{-s(\Lambda \cap S)} \]
  \[ \le \varphi^{-|S| - d + 1} \sum_{U \in \mathcal{A}(S)}
  \varphi^{-s(U)} = \varphi^{-|S| - d + 1} w(S) \]
  \[ \le \varphi \cdot 1.618^2 \cdot \left( \frac{1.618}{\varphi}
  \right)^{|S| + d} \le 5 \left( \frac{1.618}{\varphi} \right)^{f - m
    - 1}. \]

  \noindent This establishes Lemma \ref{main lemma}.
\end{proof}

\section{Conclusions}

The main result of this paper resolves many of the questions
surrounding the Fibonacci-like behavior of the number of numerical
semigroups of a given genus. However, little has been established
concerning the relationship between $n_g$ and $n_{g + 1}$. In
particular, it remains open whether $n_{g + 2} \ge n_{g + 1} + n_g$,
as conjectured in \cite{BrasAmoros0} and the conjecture that $n_{g +
  1} \ge n_g$ given in \cite{Kaplan} remains unverified for a finite
but large number of $g$.

In addition, we have confirmed Zhao's conjecture in \cite{Zhao} that
the proportion of numerical semigroups $\Lambda$ of a given genus
satsifying $f(\Lambda) < 3m(\Lambda)$ approaches $1$
asymptotically. Thus, in some sense, ``most'' numerical semigroups
satisfy $f < 3m$. It would be interesting to study whether this is
true when counting semigroups by measures of complexity other than
genus. For example, \cite{BlancoPuerto} have considered the problem of
counting the number of numerical semigroups of a given Frobenius
number; one might also ask whether most of these semigroups satisfy $f
< 3m$.

In general, it could be considered whether there is some unified sense
in which one can take the asymptotic limit of semigroups. For any
numerical semigroup $\Lambda$, we have that $g(\Lambda) \le f(\Lambda)
+ 1$, and $f(\Lambda) \le 2 g(\Lambda)$. Thus, we might expect the
sets $\{ \Lambda \mid f(\Lambda) = n \}$ and $\{ \Lambda \mid
g(\Lambda) = n \}$ to behave in similar ways as $n \rightarrow
\infty$. Both genus and Frobenius number can be thought of as proxies
for the ``complexity'' of a numerical semigroup, and it would be
interesting to explore ways to make this precise.

Finally, the proof of Lemma \ref{main lemma} given here (and in
particular the proof of Lemma \ref{interval weight}) is quite
involved. Although the main idea of bounding the weight of an interval
by partitioning it as in Lemma \ref{submultiplicativity} was simple,
computations had to be carried out for many specific cases in order to
obtain sufficiently strong bounds. We hope that by improving upon the
techniques used in this paper, significant simplifications of the
proof are possible.

\section{Acknowledgements}

This work was done as part of the University of Minnesota Duluth REU
program, funded by the National Science Foundation (Award
0754106). The author gratefully acknowledges Nathan Kaplan, who gave
many helpful suggestions for notation and the organization of the
paper. The author would also like to thank Nathan Pflueger and Maria
Monks for advising during the research stage. Finally, the author
would like to thank Joseph Gallian for organizing the Duluth REU
program as well as providing various forms of support and advice along
the way.

\section{Appendix}

We fill in here some of the computational details of the proofs of
Lemmas \ref{I(r) = 1 bound on T(r)} and \ref{I(r) = 2 bound on
  T(r)}. The matrix calculations of this section were done using
Sage.\footnote{www.sagenb.org} Recall that in the proof of Lemma
\ref{I(r) = 1 bound on T(r)} we made the definition

\[ W_n = \left[ \begin{tabular}{c c c}
    $1$ & $1$ & $1$ \\
  \end{tabular} \right] \left[ \begin{tabular}{ c c c }
    $1$ &  $1$ &  $1$ \\
    $\varphi^{-1}$ &  $1$ &  $1$ \\
    $\varphi^{-2}$ &  $1$ &  $\varphi^{-1}$ \\
  \end{tabular} \right]^{n - 1} \left[ \begin{tabular}{ c }
    $1$ \\
    $1$ \\
    $1$ \\
  \end{tabular} \right]. \]

\noindent We now justify in detail the claim that $W_n \le 3 \cdot
1.618^{2n - 2}$. Note that by explicit computation,

\[ W_1 = 3 \]
\[ W_2 = 6 + 2 \varphi^{-1} + \varphi^{-2} \le 7.62 < 3 \cdot 1.618^2 \]
\[ W_3 = 11 + 9 \varphi^{-1} + 6 \varphi^{-2} + \varphi^{-3} \le
19.10 < 3 \cdot 1.618^4. \]

\noindent By the Cayley-Hamilton theorem, we obtain the recurrence

\[ W_{n + 3} = (2.618 \cdots) W_{n + 2} - (0.236 \cdots) W_{n + 1} -
(0.146 \cdots) W_n \]

\noindent for $n \ge 1$. Thus, $W_{n + 1} \le 2.619 W_n < 1.618^2 W_n$
for each $n \ge 3$, and by induction, $W_n \le 3 \cdot 1.618^{2n -
  2}$, as desired.

A similar claim was made in the proof of Lemma \ref{I(r) = 2 bound on
  T(r)}. In that proof, we defined
  
\[ W_n = \left[ \begin{tabular}{c c c c c c}
    $1$ & $1$ & $1$ & $1$ & $1$ & $1$ \\
  \end{tabular} \right] \left[ \begin{tabular}{ c c c c c c }
    $1$ &  $\varphi^{-1}$ &  $1$ &  $\varphi^{-1}$ &  $1$ &  $\varphi^{-2}$ \\
    $1$ &  $1$ &  $1$ &  $1$ &  $1$ &  $1$ \\
    $\varphi^{-1}$ &  $\varphi^{-2}$ &  $1$ &  $\varphi^{-1}$ &  $1$ &  $\varphi^{-3}$ \\
    $\varphi^{-1}$ &  $\varphi^{-1}$ &  $1$ &  $1$ &  $1$ &  $\varphi^{-1}$ \\
    $\varphi^{-1}$ &  $\varphi^{-2}$ &  $\varphi$ &  $1$ &  $\varphi$ &  $\varphi^{-3}$ \\
    $\varphi$ &  $\varphi$ &  $\varphi$ &  $\varphi$ &  $\varphi$ &  $\varphi$ \\
  \end{tabular} \right]^{n - 1} \left[ \begin{tabular}{ c }
    $1$ \\
    $1$ \\
    $1$ \\
    $1$ \\
    $\varphi$ \\
    $\varphi$ \\
  \end{tabular} \right]. \]

\noindent It was claimed that $W_n \le (4 + 2 \varphi) \cdot 1.618^{4n
  - 4}$. As before, we first verify this for small values of $n$.

\[ W_1 = 4 + 2 \varphi \]
\[ W_2 \le 41.51 < (4 + 2 \varphi) \cdot 1.618^4 \]
\[ W_3 \le 226.83 < (4 + 2 \varphi) \cdot 1.618^8 \]
\[ W_4 \le 1225.28 < (4 + 2 \varphi) \cdot 1.618^{12} \]
\[ W_5 \le 6599.87 < (4 + 2 \varphi) \cdot 1.618^{16}. \]

We will prove by induction that $W_{n + 1} \le 6.8 W_n$ for all $n \ge
1$. This can be seen by direct verification for $n \le 4$. Proceeding
inductively, for $n > 4$, we have by the Cayley-Hamilton theorem,

\[ W_{n + 1} = (7.236 \cdots) W_n - (10.708 \cdots) W_{n - 1}
+ (3.965 \cdots) W_{n - 2} - (0.278 \cdots) W_{n - 3} \]
\[ \le 7.237 W_n - (10.707 - 3.966) W_{n - 1} \le \left( 7.237 -
  \frac{10.707 - 3.966}{6.8} \right) W_n \le 6.8 W_n, \]

\noindent where we have used implicitly the basic inequalities $W_k
\ge 0$ and $W_{k + 1} \ge W_k$ for all $k \ge 1$. Noting that $1.618^4
> 6.8$, it is now immediate that $W_n \le (4 + 2 \varphi) \cdot
1.618^{4n - 4}$ for all $n \ge 1$.


\end{document}